\providecommand{\tabularnewline}{\\}
\numberwithin{equation}{section}
\numberwithin{figure}{section}
\theoremstyle{plain}
\newtheorem{thm}{\protect\theoremname}[section]
\theoremstyle{plain}
\newtheorem{prop}[thm]{\protect\propositionname}
\theoremstyle{plain}
\newtheorem{lem}[thm]{\protect\lemmaname}
\newcommand{\lyxaddress}[1]{
	\par {\raggedright #1
	\vspace{1.4em}
	\noindent\par}
}
\DeclareMathOperator{\conv}{conv}
\DeclareMathOperator{\Dom}{Dom}
\DeclareMathOperator{\inter}{int}
\DeclareMathOperator{\relint}{relint}
\DeclareMathOperator{\Ran}{Ran}
\DeclareMathOperator{\vertices}{vert}
\DeclareMathOperator{\Vol}{Vol}
\date{ }
\providecommand{\lemmaname}{Lemma}
\providecommand{\propositionname}{Proposition}
\providecommand{\theoremname}{Theorem}
\begin{document}
\title{The maximum number of points in the cross-polytope that form a packing
set of a scaled cross-polytope}
\author{Ji Hoon Chun\footnote{The research of the author was supported by the Deutsche Forschungsgemeinschaft (DFG) Graduiertenkolleg "Facets of Complexity/Facetten der Komplexit{\"a}t" (GRK 2434).} }
\maketitle
\begin{abstract}
The problem of finding the largest number of points in the unit cross-polytope
such that the $l_{1}$-distance between any two distinct points is
at least $2r$ is investigated for $r\in\left(1-\frac{1}{n},1\right]$
in dimensions $\geq2$ and for $r\in\left(\frac{1}{2},1\right]$ in
dimension $3$. For the $n$-dimensional cross-polytope, $2n$ points
can be placed when $r\in\left(1-\frac{1}{n},1\right]$. For the three-dimensional
cross-polytope, $10$ and $12$ points can be placed if and only if
$r\in\left(\frac{3}{5},\frac{2}{3}\right]$ and $r\in\left(\frac{4}{7},\frac{3}{5}\right]$
respectively, and no more than $14$ points can be placed when $r\in\left(\frac{1}{2},\frac{4}{7}\right]$.
Also, constructive arrangements of points that attain the upper bounds
of $2n$, $10$, and $12$ are provided, as well as $13$ points for
dimension $3$ when $r\in\left(\frac{1}{2},\frac{6}{11}\right]$.
\end{abstract}

\section{Introduction}

Let $K$ and $L$ be origin-symmetric convex sets in $\mathbb{R}^{n}$
with nonempty interiors. A set $D\subset\mathbb{R}^{n}$ is a (translative)
packing set for $K$ if, for all distinct $\mathbf{x},\mathbf{y}\in D$,
\[
\left(\mathbf{x}+\inter K\right)\cap\left(\mathbf{y}+\inter K\right)=\emptyset.
\]
 For $r>0$, we consider the problem of finding the maximum number
of points in a packing set $D$ of $rK$ that is contained in $L$.
This quantity will be denoted by 
\[
\gamma\left(L,K,r\right):=\max\left\{ \left|D\right|\mid D\subset L\text{ and }\left|\left|\mathbf{x}-\mathbf{y}\right|\right|_{K}\geq2r\text{ for any }\mathbf{x},\mathbf{y}\in D,\mathbf{x}\neq\mathbf{y}\right\} ,
\]
 where $\left|\left|\mathbf{x}\right|\right|_{K}=\min\left\{ \lambda\mid\lambda\geq0\text{ and }\mathbf{x}\in\lambda K\right\} $
and for a set $S$, its cardinality is denoted by $\left|S\right|$.
If $K=L$ then we use the notation $\gamma\left(K,r\right)$ as a
shorthand for $\gamma\left(L,K,r\right)$. We will only deal with
the situation where both $K$ and $L$ are the unit cross-polytope
$C_{n}^{*}=\left\{ \mathbf{x}\in\mathbb{R}^{n}\left|\,\sum_{i=1}^{n}\left|x_{i}\right|\leq1\right.\right\} $.

A set $D$ in $C_{n}^{*}$ with $k$ points such that the $l_{1}$-distance
between any two distinct points is greater than or equal to $2r$
is equivalent to a packing of $rC_{n}^{*}+D$ such that each ball
is contained inside the set $\left(1+r\right)C_{n}^{*}$. Unless otherwise
specified, we will use ``distance'' to mean the $l_{1}$-distance.
The vertices of $C_{n}^{*}$ are the $2n$ unit vectors $\left\{ \pm\mathbf{e}_{i}\mid i\in\left\{ 1,\ldots,n\right\} \right\} $,
so the distance between any two distinct vertices is $2$, which implies
that $\gamma\left(C_{n}^{*},r\right)=1$ for any $r>1$ and $\gamma\left(C_{n}^{*},r\right)\geq2n$
for $r\leq1$.

The case $r=\frac{1}{2}$ is related to the topic of kissing numbers.
The (translative) kissing number $k\left(K\right)$ of a convex body
$K$ is the maximum number of translates of $K$ such that no two
translates overlap each other and each translate touches but does
not overlap with $K$. In the case of $\frac{1}{2}C_{n}^{*}$, we
have 
\[
k\left(\frac{1}{2}C_{n}^{*}\right)=\max\left\{ \left|D\right|\left|\,D\cup\left\{ \mathbf{0}\right\} \text{ is a packing set for }\frac{1}{2}C_{n}^{*}\text{ and }\left(\mathbf{x}+\frac{1}{2}C_{n}^{*}\right)\cap\frac{1}{2}C_{n}^{*}\neq\emptyset\text{ for any }\mathbf{x}\in D\right.\right\} ,
\]
 and since $k\left(K\right)$ is invariant under the scaling of $K$,
\[
k\left(C_{n}^{*}\right)+1=k\left(\frac{1}{2}C_{n}^{*}\right)+1\leq\gamma\left(C_{n}^{*},\frac{1}{2}\right).
\]
 For the cross-polytope, it is known that $k\left(C_{3}^{*}\right)=18$
\cite{LarmanZong1999,Talata1999}. This result implies that $\gamma\left(C_{3}^{*},\frac{1}{2}\right)\geq19$,
however, due to the requirement that one point is the origin, it does
not a priori provide an upper bound for \emph{any} packing set for
$\frac{1}{2}C_{3}^{*}$. An upper bound for the kissing number of
any convex body $K$ is obtained from a result of Hadwiger \cite{Hadwiger1957},
\[
k\left(K\right)\leq3^{n}-1,
\]
 where this inequality is an equality iff $K$ is a parallelepiped.
The cross-polytope is not a parallelepiped, so $k\left(C_{3}^{*}\right)\leq25$,
which results in an upper bound of $\gamma\left(C_{3}^{*},\frac{1}{2}\right)\leq26$.
In the other direction, a well-known result by Swinnerton-Dyer \cite{Swinnerton-Dyer1953}
says that 
\[
k\left(K\right)\geq n^{2}+n,
\]
 and in the case when $K$ is a cross-polytope the lower bound has
been improved to 
\[
k\left(C_{n}^{*}\right)\geq\left(\frac{9}{8}\right)^{\left(1-o\left(1\right)\right)n}
\]
 by \cite{LarmanZong1999} and to 
\[
k\left(C_{n}^{*}\right)\geq1.13488^{\left(1-o\left(1\right)\right)n}
\]
 by \cite{Talata2000}, which is the best known asymptotic lower bound
for the cross-polytope.

We will work with values of $r$ only in the interval $\left(\frac{1}{2},1\right]$
unless otherwise stated. For $r\in\left(1-\frac{1}{n},1\right]$,
the upper bound for the number of points in the cross-polytope such
that the distance between any two distinct points is at least $2r$
is linear in the dimension of the cross-polytope.
\begin{thm}
Let $n\geq2$, then $\gamma\left(C_{n}^{*},r\right)=2n$ for any $r\in\left(1-\frac{1}{n},1\right]$.
Additionally, $\left(1-\frac{1}{n},1\right]$ is the largest possible
interval such that $\gamma\left(C_{n}^{*},r\right)=2n$ for all $r$
in the interval.
\end{thm}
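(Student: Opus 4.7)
The plan is to prove the equality $\gamma(C_n^*,r)=2n$ for $r\in(1-1/n,1]$ in two steps, then separately verify that neither endpoint of the interval can be pushed. The lower bound $\gamma(C_n^*,r)\geq 2n$ is immediate: the $2n$ vertices $\pm\mathbf{e}_i$ of $C_n^*$ are pairwise at $l_1$-distance exactly $2$, which is $\geq 2r$ for $r\leq 1$. So the real content of the theorem is the matching upper bound, for which I intend to use a pigeonhole argument that assigns each point to a nearest vertex of $C_n^*$.

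For the upper bound, suppose for contradiction that there exist $2n+1$ points $\mathbf{p}_1,\dots,\mathbf{p}_{2n+1}\in C_n^*$ pairwise at $l_1$-distance $\geq 2r>2-2/n$. First, no $\mathbf{p}_k$ can be $\mathbf{0}$, for otherwise $\|\mathbf{p}_l\|_1\geq 2r>1$ for any $l\neq k$, which is impossible. Hence for each $k$ one can pick $i_k\in\arg\max_i|p_{k,i}|$ and set $v_k:=\sgn(p_{k,i_k})\mathbf{e}_{i_k}$, a vertex of $C_n^*$. Since there are only $2n$ vertices, two indices $k\neq l$ must satisfy $v_k=v_l$, and after a signed-permutation symmetry of $C_n^*$ I may take $v_k=v_l=\mathbf{e}_1$, so that $p_{k,1},p_{l,1}>0$ and $p_{k,1}\geq|p_{k,j}|$, $p_{l,1}\geq|p_{l,j}|$ for all $j$. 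Writing $|p_{k,1}-p_{l,1}|=p_{k,1}+p_{l,1}-2\min(p_{k,1},p_{l,1})$ and bounding $|p_{k,j}-p_{l,j}|\leq|p_{k,j}|+|p_{l,j}|$ for $j\geq 2$, I obtain
\[
\|\mathbf{p}_k-\mathbf{p}_l\|_1\leq\|\mathbf{p}_k\|_1+\|\mathbf{p}_l\|_1-2\min(p_{k,1},p_{l,1}).
\]
Since $p_{k,1}=\max_i|p_{k,i}|$ implies $p_{k,1}\geq\|\mathbf{p}_k\|_1/n$ (and similarly for $l$), and $\|\mathbf{p}_k\|_1,\|\mathbf{p}_l\|_1\leq 1$, the right-hand side is at most $2-2/n$; this contradicts $\|\mathbf{p}_k-\mathbf{p}_l\|_1\geq 2r>2-2/n$.

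For the maximality of the interval $(1-1/n,1]$, the right endpoint is handled by the preamble, which gives $\gamma(C_n^*,r)=1$ for $r>1$. At the left endpoint, I exhibit $2n+1$ points at pairwise $l_1$-distance at least $2(1-1/n)$: adjoin to the $2n$ vertices the barycenter $\mathbf{c}=(1/n,\dots,1/n)\in C_n^*$. A direct calculation yields $\|\mathbf{c}-\mathbf{e}_i\|_1=(n-1)/n+(n-1)/n=2-2/n$ and $\|\mathbf{c}-(-\mathbf{e}_i)\|_1=(n+1)/n+(n-1)/n=2$, so every pairwise distance in this configuration is at least $2-2/n=2r$ when $r=1-1/n$, showing $\gamma(C_n^*,1-1/n)\geq 2n+1$; monotonicity of $\gamma$ in $r$ then extends this strict inequality beyond $2n$ to all $r\leq 1-1/n$, so the interval cannot be widened to the left. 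I do not expect a serious obstacle; the crux is recognising that the dominant-coordinate bound $p_{k,i_k}\geq\|\mathbf{p}_k\|_1/n$, coupled with $\|\mathbf{p}_k\|_1\leq 1$, is exactly what produces the critical constant $2-2/n$.
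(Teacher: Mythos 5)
Your proof is correct, and the upper-bound argument is organised differently from the paper's. The paper covers $C_n^*$ by the $2r$-neighbourhoods of the vertices (Lemma 3.1, whose computation produces the same critical constant $2-\tfrac{2}{n}$ via the dominant-coordinate bound $x_j\geq\frac{1}{n}\sum_i x_i$), and then needs a separate uniqueness step (Lemma 2.1 plus Lemma 3.2) showing that each neighbourhood contains at most one packing point, before counting via the surjection in Lemma 3.3. You instead pigeonhole on a combinatorial label --- the signed dominant coordinate of each (necessarily nonzero) point --- and show in one direct two-point estimate that two points sharing a label are at $l_1$-distance at most $\|\mathbf{p}_k\|_1+\|\mathbf{p}_l\|_1-2\min(p_{k,1},p_{l,1})\leq 2-\tfrac{2}{n}<2r$; this collapses the paper's three lemmas into a single computation and avoids the well-definedness issue the paper must address. (Do note, as the paper does, that the final bounding step uses $1-\tfrac{2}{n}\geq 0$, i.e.\ $n\geq 2$: with $a=\min(\|\mathbf{p}_k\|_1,\|\mathbf{p}_l\|_1)\leq b\leq 1$ the quantity $a\bigl(1-\tfrac{2}{n}\bigr)+b$ is maximised at $a=b=1$ only because the coefficient of $a$ is nonnegative.) The trade-off is that the paper's lemmas are reused in the three-dimensional analysis, whereas your argument is tailored to this theorem. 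For maximality you use the $2n+1$-point set $V_n\cup\{(\tfrac{1}{n},\ldots,\tfrac{1}{n})\}$ at $r=1-\tfrac{1}{n}$ together with monotonicity in $r$, which suffices; the paper's Proposition 5.1 adjoins both facet centroids $\pm\mathbf{q}_n$ to get $2n+2$ points, but either construction rules out extending the interval to the left, and the right endpoint is handled identically via $\gamma(C_n^*,r)=1$ for $r>1$.
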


In particular, $\gamma\left(C_{3}^{*},r\right)=6$ for $r\in\left(\frac{2}{3},1\right]$.
The next theorem is specific to the three-dimensional case.
\begin{thm}
In dimension $3$,
\begin{description}
\item [{(a)}] $\gamma\left(C_{3}^{*},r\right)=10$ for any $r\in\left(\frac{3}{5},\frac{2}{3}\right]$,
\item [{(b)}] $\gamma\left(C_{3}^{*},r\right)=12$ for any $r\in\left(\frac{4}{7},\frac{3}{5}\right]$,
and
\item [{(c)}] $\gamma\left(C_{3}^{*},r\right)\leq14$ for $r\in\left(\frac{1}{2},\frac{4}{7}\right]$.
\end{description}
\end{thm}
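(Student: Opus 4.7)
\emph{Plan.} The theorem has three parts. For (a) and (b) I first provide explicit constructions realising the lower bounds, then establish the matching upper bounds; for (c) only an upper bound is needed.

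\emph{Lower bound constructions.} For (a), I take the $10$ points $\{\pm\mathbf{e}_{i}\}_{i=1}^{3}\cup\{\frac{1}{3}(\epsilon_{1},\epsilon_{2},\epsilon_{3}):\epsilon_{1}\epsilon_{2}\epsilon_{3}=+1\}$, i.e., the six vertices and the four even-parity face-centres of $C_{3}^{*}$. Direct verification: vertex-to-vertex distance is $2$; two even-parity face-centres differ in exactly two sign positions, giving $l_{1}$-distance $2\cdot\frac{2}{3}=\frac{4}{3}$; and a face-centre is at $l_{1}$-distance $\frac{2}{3}+\frac{1}{3}+\frac{1}{3}=\frac{4}{3}$ from every vertex. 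All pairwise distances are thus $\geq\frac{4}{3}=2\cdot\frac{2}{3}$, yielding $\gamma(C_{3}^{*},r)\geq 10$ for $r\in(0,\frac{2}{3}]$. For (b), I construct $12$ points on $\partial C_{3}^{*}$ with pairwise $l_{1}$-distance $\geq\frac{6}{5}$ via a symmetric configuration (a signed-permutation orbit of a suitably chosen boundary point $(a,b,c)$ with $a+b+c=1$).

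\emph{Upper bounds via per-octant analysis.} The key local tool is the identity $\|p-q\|_{1}=\|p\|_{1}+\|q\|_{1}-2\sum_{i}\min(|p_{i}|,|q_{i}|)$, valid whenever $p,q$ lie in the same closed octant of $C_{3}^{*}$. Since $\|p\|_{1}+\|q\|_{1}\leq2$, for $r>\frac{3}{5}$ the constraint $\|p-q\|_{1}>\frac{6}{5}$ forces $\sum_{i}\min(|p_{i}|,|q_{i}|)<\frac{2}{5}$. From this I extract two per-octant rules: (i) for each coordinate $i$, at most one packing point in the closed octant can have $|p_{i}|\geq\frac{2}{5}$; (ii) at most one packing point in the closed octant can have $|p_{i}|<\frac{2}{5}$ for all three $i$. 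Combining (i) and (ii) gives at most $4$ packing points per closed octant, a bound that is tight via $\{\mathbf{e}_{1},\mathbf{e}_{2},\mathbf{e}_{3},(2/5-\delta,2/5-\delta,1/5+2\delta)\}$ for small $\delta>0$. Analogous but weaker local bounds hold for $r>\frac{4}{7}$ and $r>\frac{1}{2}$ via the same identity with re-computed thresholds. Aggregating these local counts over the $8$ octants, applying inclusion-exclusion for points shared across coordinate planes, axes, and the origin, and enforcing additional distance constraints between points in adjacent or antipodal octants, one arrives at the upper bounds $10$, $12$, and $14$ respectively.

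\emph{Main obstacle.} The genuine difficulty is the local-to-global step, since the naive incidence count $N_{0}+2N_{1}+4N_{2}+8N_{3}\leq 8\cdot 4=32$ alone yields only $|D|\leq 32$. Sharp bounds demand more careful book-keeping: a shared boundary point like $\mathbf{e}_{1}$ (lying in four closed octants) consumes a coordinate-$1$ "big-coord" slot in each of those octants simultaneously, and packing constraints between points in antipodal octants impose additional distance conditions. The proof thus reduces to an organised case analysis on the distribution $(N_{0},N_{1},N_{2},N_{3})$, collapsing symmetric cases by the hyperoctahedral group of order $48$. Part (c) is the most demanding: the weaker threshold $r>\frac{1}{2}$ relaxes the per-octant bound and admits many more feasible sign patterns, so the enumeration there is the lengthiest and most error-prone portion of the argument.
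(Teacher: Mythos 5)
Your lower bound for part (a) is correct and is in fact the same configuration the paper uses (the six vertices together with the four even-parity facet centroids $\tfrac13(\epsilon_1,\epsilon_2,\epsilon_3)$, $\epsilon_1\epsilon_2\epsilon_3=1$). For part (b), however, you never exhibit the $12$-point set: ``a signed-permutation orbit of a suitably chosen boundary point'' is a promise, not a construction, and producing one that survives at $r=\tfrac35$ is not routine (the paper's set is $V_3\cup\{\pm\tfrac15(2,2,1),\pm\tfrac15(-2,1,2),\pm\tfrac15(1,-2,2)\}$, which is not a single signed-permutation orbit). So the lower half of (b) is already incomplete.

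The more serious gap is in the upper bounds, which are the substance of the theorem. Your per-octant rules are sound only on (a)'s range: rule (i) needs the threshold $t$ to exceed $1-r$ (since the same-octant identity plus $\|\mathbf{p}\|_1+\|\mathbf{q}\|_1\le 2$ only forces $\sum_i\min(|p_i|,|q_i|)\le 1-r$), while rule (ii) needs $3t\le 2r$; these coexist only when $1-r<\tfrac{2r}{3}$, i.e. $r>\tfrac35$. For $r\in(\tfrac12,\tfrac35]$ the ``all-small'' rule fails outright: for $r$ just above $\tfrac47$ the points $\left(\tfrac37-\epsilon,\tfrac37-\epsilon,0\right)$ and $\left(0,0,\tfrac37-\epsilon\right)$ lie in the same closed octant, have every coordinate below $1-r$, yet are at $l_1$-distance $\tfrac97-3\epsilon\ge 2r$. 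Hence ``the same identity with re-computed thresholds'' does not yield the claimed local bounds for (b) and (c). Even where the per-octant bound of $4$ does hold, you concede that passing from $8\times 4=32$ incidences to the bound $10$ ``reduces to an organised case analysis'' which you do not carry out; that local-to-global step is exactly where the difficulty lies, and it is what the paper replaces by different machinery: at most one packing point lies within distance $2r$ of each vertex (so at most $6$ points in $S_3(r)$), the remainder $C_3^*\setminus S_3(r)$ splits into eight convex pieces $\conv\left(V\left(r,(\sigma_1,\sigma_2,\sigma_3)\right)\right)$ of diameter $<2r$ (giving (c): $6+8=14$), and blocking lemmas show a point in one piece empties one (for $r>\tfrac47$) or three (for $r>\tfrac35$) sign-adjacent pieces, with a parity argument guaranteeing at least $2$, resp. $4$, empty pieces, giving $6+6=12$ and $6+4=10$. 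Without an executed substitute for these steps, your outline does not prove (a), (b), or (c).
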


For the case $n=3$ and $r\in\left(\frac{1}{2},\frac{4}{7}\right]$,
we could not find exact values of $\gamma\left(C_{3}^{*},r\right)$,
but we do have lower bounds. Since $\gamma\left(C_{3}^{*},r'\right)\geq\gamma\left(C_{3}^{*},r\right)$
for $r'<r$, it follows immediately from Theorem 1.2 (b) is $\gamma\left(C_{3}^{*},r\right)\geq12$
for $r\in\left(\frac{1}{2},\frac{4}{7}\right]$. It is possible to
improve this lower bound for a smaller interval of $r$.
\begin{prop}
In dimension $3$, $\gamma\left(C_{3}^{*},r\right)\geq13$ for $r\in\left(\frac{1}{2},\frac{6}{11}\right]$.
\end{prop}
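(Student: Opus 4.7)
The proposition is a constructive lower bound, so the plan is to exhibit a set of $13$ points in $C_3^*$ whose pairwise $l_1$-distances are all at least $\tfrac{12}{11} = 2\cdot\tfrac{6}{11}$; since $\gamma(C_3^*, r)$ is non-increasing in $r$, the same configuration then witnesses $\gamma(C_3^*, r) \geq 13$ throughout $(\tfrac{1}{2}, \tfrac{6}{11}]$. Note that for such $r$ Theorem~1.2(b) already gives $\gamma(C_3^*, r) \geq 12$ and Theorem~1.2(c) caps $\gamma(C_3^*, r) \leq 14$, so only $13$ or $14$ are possible and the claim is compatible with the known upper bound.

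I would search for the configuration by imposing as much of the octahedral symmetry of $C_3^*$ as possible. Since $13$ is prime and not a sum of generic orbit sizes of the full symmetry group, the configuration must either combine orbits of different types or use a proper subgroup. A natural first template is $6 + 4 + 3$: the six axial points $\pm a\,\mathbf{e}_i$, a four-point tetrahedron $(\pm b, \pm b, \pm b)$ (with an even number of minus signs), and a final triple of points related by the $3$-fold rotation about the diagonal $(1,1,1)$. Symmetry then collapses the pairwise distance checks to a short list---the intra-axial distance $2a$, the intra-tetrahedral distance $4b$, the axial-to-tetrahedral distance $a+b$ (when $a \geq b$), and analogous expressions involving the third orbit---each of which must be at least $\tfrac{12}{11}$, together with the membership constraints $a \leq 1$, $3b \leq 1$, and the corresponding constraint on the third orbit. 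Solving this finite list of inequalities should pin down an admissible parameter window; writing down explicit coordinates and verifying the $\binom{13}{2}=78$ pairwise distances (reduced by symmetry to a handful of representative pairs) then completes the proof.

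The principal obstacle is locating the configuration itself. The $12$-point arrangements behind Theorem~1.2(b) are tight at $r = \tfrac{3}{5}$, so they do not obviously extend by a single extra point; the construction must instead exploit the relaxation of the distance threshold from $\tfrac{6}{5}$ down to $\tfrac{12}{11}$ to free up room for a thirteenth point on a face or in the interior of $C_3^*$. Identifying the right combinatorial template (and then the precise coordinates) is the substantive part of the argument; once it is written down, the rest is a routine numerical check.
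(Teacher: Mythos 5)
Your proposal does not actually prove the proposition: it is a search strategy, and the entire content of this lower bound is the explicit configuration, which you never exhibit. The paper's proof (Proposition 5.4) writes down the $13$ points concretely, namely $V_{3}\cup Q_{13}$ with
$Q_{13}=\frac{1}{11}\left\{ \left(-1,5,5\right)^{\mathsf{T}},\left(5,-1,5\right)^{\mathsf{T}},\left(5,5,-1\right)^{\mathsf{T}},\left(-5,-2,4\right)^{\mathsf{T}},\left(-5,4,-2\right)^{\mathsf{T}},\left(4,-2,-5\right)^{\mathsf{T}},\left(-3,-5,-3\right)^{\mathsf{T}}\right\}$,
and checks that all pairwise $l_{1}$-distances are at least $\frac{12}{11}$; the structure is the six vertices plus one point in seven of the eight regions $\conv\left(V\left(\frac{6}{11},\left(\sigma_{1},\sigma_{2},\sigma_{3}\right)\right)\right)$, and the paper explicitly notes this set has no obvious large-scale symmetries. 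So "identifying the template and coordinates," which you defer, is precisely the missing proof.

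Moreover, the specific symmetric template you propose ($6$ axial points plus the even-sign tetrahedron $\left(\pm b,\pm b,\pm b\right)$ plus a $3$-point rotational orbit) provably fails in its natural form with axial points at the vertices $\pm\mathbf{e}_{i}$. At $r=\frac{6}{11}$ any further point must avoid all six balls $C\left(\pm\mathbf{e}_{i},\frac{12}{11}\right)$, hence lies in one of the eight corner regions (Lemma 4.1), and each region holds at most one packing point since its diameter is less than $\frac{12}{11}$ (Lemma 4.5). Your tetrahedron (which needs $b\in\left[\frac{3}{11},\frac{1}{3}\right]$ from $4b\geq\frac{12}{11}$ and $3b\leq1$) occupies the four even-sign corners, so each additional point sits in an odd-sign corner, adjacent (one sign flip) to three tetrahedron points. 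Writing the corner point as $\frac{1}{11}\left(-a,c,d\right)$ with $a,c,d\in\left[1,5\right]$, $a+c+d\leq11$, and $b=\frac{\beta}{11}$, the sum of its three distances to those tetrahedron points equals $\frac{1}{11}\left(a+c+d+3\beta+2\left(\left|a-\beta\right|+\left|c-\beta\right|+\left|d-\beta\right|\right)\right)$, and maximizing over the admissible $\left(a,c,d\right)$ and $\beta\in\left[3,\frac{11}{3}\right]$ gives at most $\frac{29+\beta}{11}<\frac{36}{11}$; hence at least one of the three distances is below $\frac{12}{11}$, and not even an eleventh point can be added to that $6+4$ core, let alone three. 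This is consistent with the paper's constructions: already its $12$-point set replaces the tetrahedron by asymmetric points such as $\frac{1}{5}\left(2,2,1\right)^{\mathsf{T}}$. So the gap is twofold: no configuration is produced, and the proposed ansatz points in a direction that cannot reach $13$.
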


These lower bounds are obtained by explicit constructions. It follows
from Theorems 1.1 and 1.2, Proposition 1.3, and the above discussion
that 
\begin{align*}
\gamma\left(C_{n}^{*},r\right)=1 & \qquad\text{for }r\in\left(1,\infty\right)\text{,}\\
\gamma\left(C_{n}^{*},r\right)=2n & \qquad\text{for }r\in\left(1-\frac{1}{n},1\right]\text{,}\\
\gamma\left(C_{3}^{*},r\right)=10 & \qquad\text{for }r\in\left(\frac{3}{5},\frac{2}{3}\right]\text{,}\\
\gamma\left(C_{3}^{*},r\right)=12 & \qquad\text{for }r\in\left(\frac{4}{7},\frac{3}{5}\right]\text{,}\\
12\leq\gamma\left(C_{3}^{*},r\right)\leq14 & \qquad\text{for }r\in\left(\frac{6}{11},\frac{4}{7}\right]\text{,}\\
12\leq\gamma\left(C_{3}^{*},r\right)\leq14 & \qquad\text{for }r\in\left(\frac{1}{2},\frac{6}{11}\right]\text{, and}\\
19\leq\gamma\left(C_{3}^{*},r\right)\leq26 & \qquad\text{for }r=\frac{1}{2}\text{.}
\end{align*}
 Below is a chart of the results for dimension $3$ in addition to
the upper and lower bounds for $r=\frac{1}{2}$ mentioned above.
\noindent \begin{center}
\includegraphics[scale=0.33]{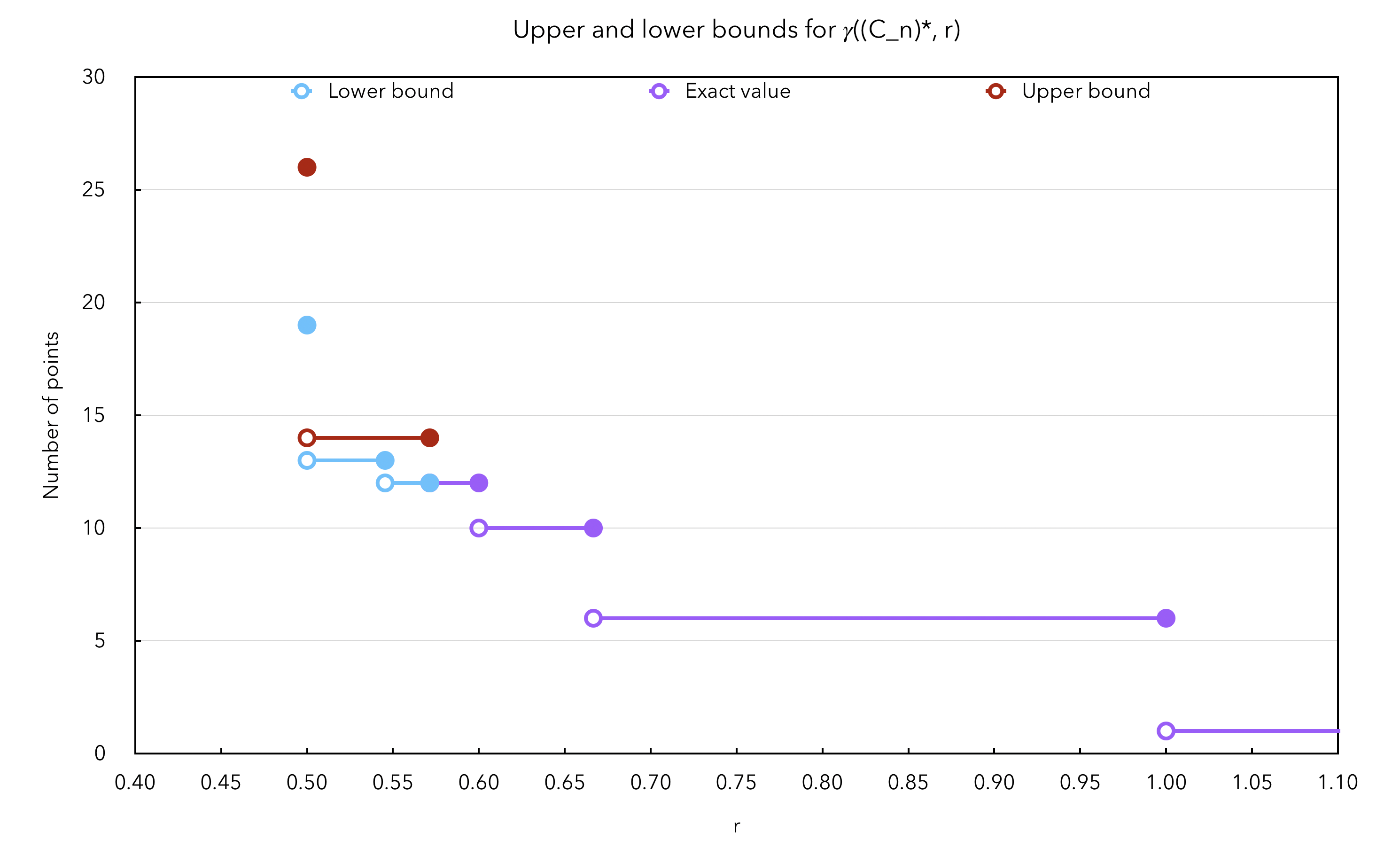}
\par\end{center}

A closely related topic is the problem of packing a set with copies
of another set. This problem has been explored mainly in dimension
$2$. Let $K$ and $L$ be origin-symmetric convex sets with nonempty
interior, then let
\[
M\left(L,K,m\right):=\sup\left\{ r\mid\left|D\right|=m,\ D\subset L\text{, and}\ \left|\left|\mathbf{x}-\mathbf{y}\right|\right|_{K}\geq2r\text{ for any }\mathbf{x},\mathbf{y}\in D,\mathbf{x}\neq\mathbf{y}\right\} .
\]
 From this definition, $M\left(L,K,m\right)$ and $\gamma\left(L,K,r\right)$
are related by the equation 
\[
\gamma\left(L,K,M\left(L,K,m\right)\right)=m.
\]
 The compendium of Goodman, O'Rourke, and T{\'o}th \cite{GoodmanORourkeToth2017}
lists known quantities of $M\left(L,B_{2},m\right)$ for when $L$
is a square, a circle, and an equilateral triangle and various values
of $m$, usually small. In three dimensions, $M\left(L,B_{3},m\right)$
is known for small $m$ and when $L$ is a cube, a cross-polytope,
and a tetrahedron \cite{GoodmanORourkeToth2017}. A related problem
of packings of squares and rectangles in squares is described in \cite{CroftFalconerGuy1991}.
Let $B_{n}$ be the unit Euclidean ball in $n$ dimensions. B{\"o}r{\"o}czky
Jr. and Wintsche have obtained $M\left(C_{n}^{*},B_{n},m\right)$
for $n\geq3$ and $m=\left\{ 3,\ldots,2n+1\right\} $ \cite{BoeroeczkyWintsche2000}.

Let $K$ be a convex set, $B$ be a bounded convex set, $s>0$, and
let $D\left(s,K,B\right)$ be a packing set of $C_{n}^{*}$ such that
$\left|\left\{ \mathbf{x}\in D\left(s,K,B\right)\mid K+\mathbf{x}\subseteq sB\right\} \right|$
is maximal among all packing sets of $C_{n}^{*}$. The\textbf{ }density
of the densest packing of $K$, or the packing density of $K$, is
defined to be 
\[
\delta\left(K\right):=\lim_{s\rightarrow\infty}\frac{\left|\left\{ \mathbf{x}\in D\left(s,K,B\right)\mid K+\mathbf{x}\subseteq sB\right\} \right|\Vol\left(K\right)}{\Vol\left(sB\right)},
\]
 see Definition 4 in Section 20 of \cite{GruberLekkerkerker1987}
(page 225), and it is independent of $B$. Then we can set $B=C_{n}^{*}$
and also suppose that $K=C_{n}^{*}$. For $s>1$, since $C_{n}^{*}+\mathbf{x}\subseteq sC_{n}^{*}$
iff $\mathbf{x}\in\left(s-1\right)C_{n}^{*}$, we have $\left|\left\{ \mathbf{x}\in D\left(s,K,B\right)\mid C_{n}^{*}+\mathbf{x}\subseteq sC_{n}^{*}\right\} \right|=\left|\left\{ \mathbf{x}\in D\left(s,K,B\right)\mid\mathbf{x}\in\left(s-1\right)C_{n}^{*}\right\} \right|$.
Next, scale this set by a factor of $\frac{1}{s-1}$ to get $\left|\left\{ \mathbf{x}\in D\left(s,K,B\right)\mid\mathbf{x}\in\left(s-1\right)C_{n}^{*}\right\} \right|=\left|\left\{ \left.\mathbf{x}\in\frac{1}{s-1}D\left(s,K,B\right)\,\right|\mathbf{x}\in C_{n}^{*}\right\} \right|=\gamma\left(C_{n}^{*},\frac{1}{s-1}\right)$.
Now let $r=\frac{1}{s-1}$. It follows from the definition of the
packing density that 
\[
\delta\left(C_{n}^{*}\right)=\lim_{s\rightarrow\infty}\frac{\gamma\left(C_{n}^{*},\frac{1}{s-1}\right)\Vol C_{n}^{*}}{\Vol sC_{n}^{*}}=\lim_{r\rightarrow0}\gamma\left(C_{n}^{*},r\right)\left(1-\frac{1}{1+r}\right)^{n}.
\]
 Hence the packing density of $C_{n}^{*}$ is related to $\gamma\left(C_{n}^{*},r\right)$
in the sense that $\gamma\left(C_{n}^{*},r\right)\left(1-\frac{1}{1+r}\right)^{n}\sim\delta\left(C_{n}^{*}\right)$
as $r\rightarrow0$.

We now mention some related results involving circle packings in a
circle and sphere packings in a cylinder. For the problem of sphere
packing inside a cylinder of fixed width in three dimensions, Fu et
al. \cite{FuSteinhardtZhaoSocolarCharbonneau2015} predict that as
the radius of the spheres approach zero, densest packings resemble
the face-centered cubic lattice---a densest sphere packing in three
dimensions \cite{ConwaySloane1998}---except for the spheres that
are near the walls of the cylinder. In the case of dimension two the
densest circle packing is generated by the hexagonal lattice \cite{ConwaySloane1998}.
Hopkins, Stillinger, and Torquato \cite{HopkinsStillingerTorquato2010}
provide examples of this phenomenon for dense packings of circles
inside a large circle under the condition that the large circle has
the same center as one of the small circles. Sch{\"u}rmann \cite{Schuermann2002,Schuermann2005}
has shown that under certain conditions the best finite packings of
strictly convex bodies can only be obtained using nonlattice packings.
Other dense arrangements of $k$ circles within a large circle include
modified wedge hexagonal packings and curved hexagonal packings \cite{HopkinsStillingerTorquato2010},
which are the best known packings for some values of $k$ \cite{LubachevskyGraham1997}.

Basic facts about convexity and the cross-polytope can be found in
books such as the ones from Gruber \cite{Gruber2007}, Ziegler \cite{Ziegler2007},
and Coxeter \cite{Coxeter1948}, and about packings are in Conway
and Sloane \cite{ConwaySloane1998}, Gruber \cite{Gruber2007}, and
Zong \cite{Zong1999}. Additional details on the kissing number are
also in Zong \cite{Zong1999}.

Section 2 of this paper provides the notation and preliminaries that
will be used for the rest of the text. Section 3 contains the proof
of the $n$-dimensional case, Theorem 1.1. Section 4 proves the equalities
and upper bound present in the three parts of the $3$-dimensional
case, Theorem 1.2, introducing additional notation as needed. Theorem
1.3 is proved in Section 5, and finally Section 6 presents a gallery
of diagrams related to these lower bounds.

\section{General notation and preliminaries}

Here we introduce notation that will be used over the course of this
paper. For a given $r>0$, let $P_{n}\left(r\right)\subset C_{n}^{*}$
be a packing set of $rC_{n}^{*}$. For any polytope $K$, let $\vertices\left(K\right)$
be the set of its vertices. For a fixed $n\in\mathbb{N}$, define
sets $V_{n}$ and $S_{n}\left(r\right)$ as follows: 
\[
V_{n}:=\vertices\left(C_{n}^{*}\right)=\left\{ \pm\mathbf{e}_{i}\mid i\in\left\{ 1,\ldots,n\right\} \right\} 
\]
 and 
\[
S_{n}\left(r\right):=\left(V_{n}+2r\inter\left(C_{n}^{*}\right)\right)\cap C_{n}^{*}.
\]
 Therefore $S_{n}\left(r\right)$ is the set of all points in $C_{n}^{*}$
that are of distance $<2r$ from some vertex of $C_{n}^{*}$.

For $\mathbf{p}\in\mathbb{R}^{n}$ and $r>0$, we use the notation
\[
C\left(\mathbf{p},r\right):=\left\{ \mathbf{x}\in\mathbb{R}^{n}\mid\left|\left|\mathbf{x}-\mathbf{p}\right|\right|_{1}<r\right\} 
\]
 to denote the interior of the cross-polytope centered at $\mathbf{p}$
and scaled by the factor $r$.

The following lemma is necessary for the general $n$-dimensional
case.
\begin{lem}
Let $r\in\left(0,1\right]$. For each $j\in\left\{ 1,\ldots,n\right\} $,
\[
C_{n}^{*}\cap C\left(\mathbf{e}_{j},2r\right)\subseteq\overline{C\left(\left(1-r\right)\mathbf{e}_{j},r\right)},
\]
 where $\overline{X}$ is the closure of $X$, and similarly for $-\mathbf{e}_{j}$
instead of $\mathbf{e}_{j}$.
\end{lem}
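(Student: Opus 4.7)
The plan is to unwind the definitions and reduce to a one-variable inequality in $x_j$. Write $\mathbf{x} = (x_1,\dots,x_n)$ and let $s := \sum_{i\neq j}|x_i|$, so the hypothesis $\mathbf{x}\in C_n^*$ becomes $|x_j|+s\le 1$, and the hypothesis $\mathbf{x}\in C(\mathbf{e}_j,2r)$ becomes $|x_j-1|+s<2r$. Since $|x_j|\le 1$ we may replace $|x_j-1|$ by $1-x_j$. The desired conclusion $\mathbf{x}\in\overline{C((1-r)\mathbf{e}_j,r)}$ is the inequality $|x_j-(1-r)|+s\le r$.

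First I would split into the two natural cases determined by the sign of $x_j-(1-r)$. If $x_j\ge 1-r$, then since $r\le 1$ we have $x_j\ge 0$, so $|x_j|=x_j$, and the packing constraint $x_j+s\le 1$ rearranges to $(x_j-(1-r))+s\le r$, which is the required inequality. If instead $x_j<1-r$, then $|x_j-(1-r)|=(1-r)-x_j$, and the distance constraint $(1-x_j)+s<2r$ rearranges to $((1-r)-x_j)+s<r$, again giving what we need (with strict inequality in this case). Both branches land inside the closed set $\overline{C((1-r)\mathbf{e}_j,r)}$.

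The case with $-\mathbf{e}_j$ replacing $\mathbf{e}_j$ follows immediately from the reflection symmetry $x_j\mapsto -x_j$, which is an isometry of $C_n^*$ sending $\mathbf{e}_j$ to $-\mathbf{e}_j$ and $(1-r)\mathbf{e}_j$ to $-(1-r)\mathbf{e}_j$.

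I do not anticipate any real obstacle: the content is just the two-line arithmetic check above, and the only subtlety worth flagging is that the hypothesis $r\le 1$ is used to guarantee $1-r\ge 0$ (so that $x_j\ge 1-r$ forces $x_j\ge 0$) in the first case; without this, one could not drop the absolute value on $x_j$.
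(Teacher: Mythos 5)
Your proof is correct and is essentially the paper's own argument: the same split on the sign of $x_j-(1-r)$, using $\left|\left|\mathbf{x}\right|\right|_1\le 1$ in the branch $x_j\ge 1-r$ and the distance condition to $\mathbf{e}_j$ in the branch $x_j<1-r$, with the $-\mathbf{e}_j$ case by symmetry. (A minor remark: the inequality $x_j+s\le|x_j|+s\le 1$ holds without needing $x_j\ge 0$, so the subtlety you flag is even milder than you suggest.)
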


\begin{proof}
Without loss of generality we take the $\mathbf{e}_{j}$ case. Let
$\mathbf{y}=\sum_{i=1}^{n}y_{i}\mathbf{e}_{i}\in C_{n}^{*}\cap C\left(\mathbf{e}_{j},2r\right)$,
then $\left|\left|\mathbf{y}\right|\right|_{1}\leq1$ and $\left|\left|\mathbf{y}-\mathbf{e}_{j}\right|\right|_{1}\leq2r$.
Then the distance from $\mathbf{y}$ to $\left(1-r\right)\mathbf{e}_{j}$
is 
\begin{eqnarray*}
\left|\left|\mathbf{y}-\left(1-r\right)\mathbf{e}_{j}\right|\right|_{1} & = & \left|\left|\sum_{i=1}^{n}y_{i}\mathbf{e}_{i}-\left(1-r\right)\mathbf{e}_{j}\right|\right|_{1}\\
 & = & \left|\left|\sum_{i\neq j}y_{i}\mathbf{e}_{i}+y_{j}\mathbf{e}_{j}-\left(1-r\right)\mathbf{e}_{j}\right|\right|_{1}\\
 & = & \left|\left|\sum_{i\neq j}y_{i}\mathbf{e}_{i}+\left(y_{j}+r-1\right)\mathbf{e}_{j}\right|\right|_{1}\\
 & = & \sum_{i\neq j}\left|y_{i}\right|+\left|y_{j}+r-1\right|.
\end{eqnarray*}
 If $y_{j}+r-1\geq0$ then since $\sum_{i=1}^{n}\left|y_{i}\right|\leq1$,

\begin{eqnarray*}
\sum_{i\neq j}\left|y_{i}\right|+\left|y_{j}+r-1\right| & \leq & \left(1-y_{j}\right)+y_{j}+r-1\\
 & = & r.
\end{eqnarray*}
 Similarly, if $y_{j}+r-1<0$ then since $\left|\left|\mathbf{y}-\mathbf{e}_{j}\right|\right|_{1}\leq2r$,
\begin{eqnarray*}
\sum_{i\neq j}\left|y_{i}\right|+\left|y_{j}+r-1\right| & = & \sum_{i\neq j}\left|y_{i}\right|-y_{j}-r+1\\
 & = & \sum_{i\neq j}\left|y_{i}\right|+\left|y_{j}-1\right|-r\\
 & \leq & 2r-r\\
 & = & r.
\end{eqnarray*}
 So $\left|\left|\mathbf{y}-\left(1-r\right)\mathbf{e}_{j}\right|\right|_{1}\leq r$,
or in other words, $\mathbf{y}\in\left\{ \mathbf{x}\in\mathbb{R}^{n}\mid\left|\left|\mathbf{x}-\left(1-r\right)\mathbf{e}_{j}\right|\right|_{1}\leq r\right\} =\overline{C\left(\left(1-r\right)\mathbf{e}_{j},r\right)}$.
\end{proof}

\section{Proof of Theorem 1.1 (the $\boldsymbol{n}$-dimensional case)}

In this section we assume that $n\geq2$. We will show that for any
$r\in\left(1-\frac{1}{n},1\right]$ and any packing set $P_{n}\left(r\right)$,
the number of points in $P_{n}\left(r\right)\cap S_{n}\left(r\right)$
is bounded above by the number of vertices of $C_{n}^{*}$. Then $\left|P_{n}\left(r\right)\right|\leq2n$
and this inequality is true for all $P_{n}\left(r\right)$, so $\gamma\left(C_{n}^{*},r\right)\leq2n$.
As mentioned in the introduction, the set of vertices $V_{n}\subset C_{n}^{*}$
is a packing set of $rC_{n}^{*}$, which means that $2n$ is also
a lower bound, and so $\gamma\left(C_{n}^{*},r\right)=2n$.
\begin{lem}
Let $r\in\left(1-\frac{1}{n},1\right]$, then $C_{n}^{*}=S_{n}\left(r\right)$.
\end{lem}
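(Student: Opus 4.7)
The plan is to verify the nontrivial inclusion $C_n^* \subseteq S_n(r)$, since the reverse inclusion is immediate from the definition of $S_n(r)$. Given an arbitrary $\mathbf{x} = \sum_{i=1}^n x_i \mathbf{e}_i \in C_n^*$, I would select an index $j$ with $|x_j| = \max_{1 \leq i \leq n} |x_i|$, set $\sigma = \sgn(x_j)$ (choosing either sign if $\mathbf{x} = \mathbf{0}$), and show that the vertex $\sigma \mathbf{e}_j$ lies at $l_1$-distance strictly less than $2r$ from $\mathbf{x}$.

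The first step is a direct computation: since $|x_j| \leq \|\mathbf{x}\|_1 \leq 1$, one has $|x_j - \sigma| = 1 - |x_j|$, so
\[
\|\mathbf{x} - \sigma \mathbf{e}_j\|_1 = \sum_{i \neq j} |x_i| + (1 - |x_j|) = \|\mathbf{x}\|_1 + 1 - 2|x_j|.
\]
The second step uses the choice of $j$: among the $n$ nonnegative numbers $|x_1|, \ldots, |x_n|$ summing to $\|\mathbf{x}\|_1$, the maximum satisfies $|x_j| \geq \|\mathbf{x}\|_1 / n$. Substituting this gives
\[
\|\mathbf{x} - \sigma \mathbf{e}_j\|_1 \leq \|\mathbf{x}\|_1 \left(1 - \tfrac{2}{n}\right) + 1.
\]
For $n \geq 2$ the coefficient $1 - 2/n$ is nonnegative, so the bound $\|\mathbf{x}\|_1 \leq 1$ yields $\|\mathbf{x} - \sigma \mathbf{e}_j\|_1 \leq 2(1 - 1/n)$.

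The third step invokes the hypothesis $r > 1 - 1/n$, which gives $\|\mathbf{x} - \sigma \mathbf{e}_j\|_1 \leq 2(1 - 1/n) < 2r$, hence $\mathbf{x} \in \sigma \mathbf{e}_j + 2r \inter(C_n^*)$, and therefore $\mathbf{x} \in S_n(r)$.

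I do not anticipate any serious obstacle here: the entire argument rests on picking the vertex $\sigma \mathbf{e}_j$ adapted to the largest coordinate of $\mathbf{x}$, after which the only nontrivial input is the averaging inequality $|x_j| \geq \|\mathbf{x}\|_1/n$. The strict inequality in the conclusion is automatic because the hypothesis $r > 1 - 1/n$ is strict, so no delicate boundary analysis is required.
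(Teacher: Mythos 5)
Your proof is correct and follows essentially the same route as the paper: choose the vertex $\sigma\mathbf{e}_{j}$ corresponding to a coordinate of maximal absolute value (the paper reduces WLOG to the nonnegative orthant and picks $x_{j}\geq\frac{1}{n}\sum_{i}x_{i}$, which is the same averaging input), compute $\left|\left|\mathbf{x}-\sigma\mathbf{e}_{j}\right|\right|_{1}=\left|\left|\mathbf{x}\right|\right|_{1}+1-2\left|x_{j}\right|\leq2\left(1-\frac{1}{n}\right)<2r$ using $1-\frac{2}{n}\geq0$ for $n\geq2$. The only difference is cosmetic (explicit sign handling versus the paper's symmetry reduction), so there is nothing to add.
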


\begin{proof}
By definition, $S_{n}\left(r\right)\subseteq C_{n}^{*}$, and so it
remains to show the reverse inclusion. Let $\mathbf{x}\in C_{n}^{*}$
and without loss of generality it can be assumed that $\mathbf{x}$
is in the convex hull of $\mathbf{0},\mathbf{e}_{1},\ldots,\mathbf{e}_{n}$.
Then $\mathbf{x}=\sum_{i=1}^{n}x_{i}\mathbf{e}_{i}$ with $0\leq x_{i}\leq1$
and $\sum_{i=1}^{n}x_{i}\leq1$. Then there exists some $j\in\left\{ 1,\ldots,n\right\} $
such that $x_{j}\geq\frac{1}{n}\sum_{i=1}^{n}x_{i}$, so 
\begin{eqnarray*}
\left|\left|\mathbf{x}-\mathbf{e}_{j}\right|\right|_{1} & = & \left|\left|\sum_{i=1}^{n}x_{i}\mathbf{e}_{i}-\mathbf{e}_{j}\right|\right|_{1}\\
 & = & \left|\left|\sum_{i\neq j}x_{i}\mathbf{e}_{i}+x_{j}\mathbf{e}_{j}-\mathbf{e}_{j}\right|\right|_{1}\\
 & = & \left|\left|\sum_{i\neq j}x_{i}\mathbf{e}_{i}+\left(x_{j}-1\right)\mathbf{e}_{j}\right|\right|_{1}\\
 & = & \sum_{i\neq j}x_{i}+\left(1-x_{j}\right)\\
 & = & \sum_{i=1}^{n}x_{i}-x_{j}+\left(1-x_{j}\right)\\
 & = & \sum_{i=1}^{n}x_{i}-2x_{j}+1\\
 & \leq & \sum_{i=1}^{n}x_{i}-\frac{2}{n}\sum_{i=1}^{n}x_{i}+1\\
 & = & \left(1-\frac{2}{n}\right)\sum_{i=1}^{n}x_{i}+1\\
 & \leq & \left(1-\frac{2}{n}\right)+1\qquad\text{(because }{\textstyle 1-\frac{2}{n}}\geq0\text{)}\\
 & = & 2\left(1-\frac{1}{n}\right)\\
 & < & 2r.
\end{eqnarray*}
 So every point in $C_{n}^{*}$ is within distance $2r$ from some
vertex of $C_{n}^{*}$.
\end{proof}
The next lemma will be crucial for showing that the number of points
in $P_{n}\left(r\right)\cap S_{n}\left(r\right)$ is bounded above
by the number of vertices of $C_{n}^{*}$. It is a uniqueness condition
which shows that if a point $\mathbf{p}\in P_{n}\left(r\right)\cap S_{n}\left(r\right)$
is close to a vertex $\mathbf{v}$ of $C_{n}^{*}$, specifically $\left|\left|\mathbf{p}-\mathbf{v}\right|\right|_{1}<2r$,
then no other point in $P_{n}\left(r\right)$ can be close to $\mathbf{v}$.
\begin{lem}
Let $r\in\left(0,1\right]$. If a vertex $\mathbf{v}$ of $C_{n}^{*}$
has the property that $\mathbf{v}\in C\left(\mathbf{p},2r\right)\cap C\left(\mathbf{q},2r\right)$
for some $\mathbf{p},\mathbf{q}\in P_{n}\left(r\right)\cap S_{n}\left(r\right)$,
then $\mathbf{p}=\mathbf{q}$.
\end{lem}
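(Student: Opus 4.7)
The plan is to suppose $\mathbf{p} \neq \mathbf{q}$, derive $\|\mathbf{p} - \mathbf{q}\|_1 < 2r$, and thereby contradict the packing condition $\|\mathbf{p} - \mathbf{q}\|_1 \geq 2r$. By the reflective symmetries of $C_n^*$, I would assume without loss of generality that $\mathbf{v} = \mathbf{e}_j$ for some $j$, so the hypothesis reads $\|\mathbf{p} - \mathbf{e}_j\|_1 < 2r$ and $\|\mathbf{q} - \mathbf{e}_j\|_1 < 2r$, with $\mathbf{p}, \mathbf{q} \in C_n^*$.

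First, I would apply Lemma 2.1 to both $\mathbf{p}$ and $\mathbf{q}$, placing them in $\overline{C((1-r)\mathbf{e}_j, r)}$, so that $\|\mathbf{p} - (1-r)\mathbf{e}_j\|_1 \leq r$ and $\|\mathbf{q} - (1-r)\mathbf{e}_j\|_1 \leq r$. A single application of the triangle inequality then yields $\|\mathbf{p} - \mathbf{q}\|_1 \leq 2r$. If either of the two estimates from Lemma 2.1 is strict, the conclusion is immediate; the hard part will be handling the case when both are equalities, since Lemma 2.1 is stated non-strictly and the packing inequality is also non-strict.

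Revisiting the proof of Lemma 2.1, in the regime $p_j < 1 - r$ the conclusion is already strict, because one then has the identity $\|\mathbf{p} - (1-r)\mathbf{e}_j\|_1 = \|\mathbf{p} - \mathbf{e}_j\|_1 - r$ and the strict hypothesis applies. So the remaining case is $p_j \geq 1 - r$ and $q_j \geq 1 - r$, with both $\mathbf{p}$ and $\mathbf{q}$ on $\partial C_n^*$ (the other equality condition extracted from the proof). Under these conditions a short computation gives $\|\mathbf{p} - \mathbf{e}_j\|_1 = 2(1 - p_j)$, so the strict hypothesis upgrades $p_j \geq 1 - r$ to $p_j > 1 - r$; likewise $q_j > 1 - r$.

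To close this remaining case, set $\mathbf{a} = \mathbf{p} - (1-r)\mathbf{e}_j$ and $\mathbf{b} = (1-r)\mathbf{e}_j - \mathbf{q}$, so that $\mathbf{a} + \mathbf{b} = \mathbf{p} - \mathbf{q}$. The key observation is that the $j$-th coordinates satisfy $a_j = p_j - (1-r) > 0$ and $b_j = (1-r) - q_j < 0$. Since the $l_1$ triangle inequality $\|\mathbf{a} + \mathbf{b}\|_1 \leq \|\mathbf{a}\|_1 + \|\mathbf{b}\|_1$ becomes strict as soon as some pair $a_i, b_i$ has opposite nonzero signs, one obtains $\|\mathbf{p} - \mathbf{q}\|_1 < \|\mathbf{a}\|_1 + \|\mathbf{b}\|_1 \leq 2r$, contradicting the packing condition. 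Therefore $\mathbf{p} = \mathbf{q}$.
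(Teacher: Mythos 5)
Your proposal is correct, and it rests on the same two pillars as the paper's argument: reduce to $\mathbf{v}=\mathbf{e}_{j}$, funnel both points through Lemma 2.1 toward $\left(1-r\right)\mathbf{e}_{j}$, and contradict the packing condition $\left|\left|\mathbf{p}-\mathbf{q}\right|\right|_{1}\geq2r$. The difference lies in how the crucial \emph{strict} inequality is obtained. The paper exploits the openness of $C\left(\mathbf{e}_{j},2r\right)$: since $\mathbf{p},\mathbf{q}$ lie in this open set, there is an $r'<r$ with $\mathbf{p},\mathbf{q}\in C\left(\mathbf{e}_{j},2r'\right)$, and Lemma 2.1 applied at radius $r'$ places both points in $\overline{C\left(\left(1-r'\right)\mathbf{e}_{j},r'\right)}$, a set of $l_{1}$-diameter at most $2r'<2r$; no equality-case analysis is needed. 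You instead apply Lemma 2.1 at radius $r$ itself, accept the non-strict bound $\left|\left|\mathbf{p}-\mathbf{q}\right|\right|_{1}\leq2r$, and then rule out equality by reopening the proof of Lemma 2.1 (equality forces $p_{j}\geq1-r$, $q_{j}\geq1-r$ and $\mathbf{p},\mathbf{q}\in\partial C_{n}^{*}$, whence $p_{j},q_{j}>1-r$ from the strict hypothesis) and invoking the strictness of the $l_{1}$ triangle inequality when the $j$-th coordinates of $\mathbf{p}-\left(1-r\right)\mathbf{e}_{j}$ and $\left(1-r\right)\mathbf{e}_{j}-\mathbf{q}$ have opposite signs; I checked these equality conditions and they are right. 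What each approach buys: the paper's radius-shrinking trick is shorter and uses only the statement of Lemma 2.1, so it is more modular; your route is self-contained at the single radius $r$ and makes explicit exactly where slack must appear, but it depends on facts extracted from the proof of Lemma 2.1 rather than its statement, so as written it would be cleaner if you first recorded those equality conditions as a small refinement of that lemma.
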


\begin{proof}
Without loss of generality, let $\mathbf{v}=\mathbf{e}_{j}$ for some
$j\in\left\{ 1,\ldots,n\right\} $, then by hypothesis $\mathbf{e}_{j}\in C\left(\mathbf{p},2r\right)\cap C\left(\mathbf{q},2r\right)$.
It suffices to show that $\left|\left|\mathbf{p}-\mathbf{q}\right|\right|_{1}<2r$
since the distance between two distinct points in $P_{n,r}$ must
be $2r$ or greater. . Then in turn, $\mathbf{p},\mathbf{q}\in C\left(\mathbf{e}_{j},2r\right)$.
Since $C\left(\mathbf{e}_{j},2r\right)$ is open there exists a $r'<r$
($r'$ depends on $\mathbf{p}$ and $\mathbf{q}$) such that $\mathbf{p},\mathbf{q}\in C\left(\mathbf{e}_{j},2r'\right)$.
Then it follows from Lemma 2.1 applied to $C_{n}^{*}\cap C\left(\mathbf{e}_{j},2r'\right)$
that 
\[
C_{n}^{*}\cap C\left(\mathbf{e}_{j},2r'\right)\subseteq\overline{C\left(\left(1-r'\right)\mathbf{e}_{j},r'\right)}\subseteq C\left(\left(1-r'\right)\mathbf{e}_{j},r\right),
\]
 so $\left|\left|\mathbf{p}-\mathbf{q}\right|\right|_{1}<2r$.
\end{proof}
The following lemma will be used both here and in the $3$-dimensional
cases in the next section.
\begin{lem}
Let $r\in\left(0,1\right]$, then 
\[
\left|P_{n}\left(r\right)\cap S_{n}\left(r\right)\right|\leq2n.
\]
\end{lem}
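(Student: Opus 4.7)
The plan is to construct an injection $f : P_{n}\left(r\right)\cap S_{n}\left(r\right) \to V_{n}$, from which the desired bound $\left|P_{n}\left(r\right)\cap S_{n}\left(r\right)\right|\leq\left|V_{n}\right|=2n$ follows immediately by counting. The two ingredients needed for this are already at hand: membership in $S_{n}\left(r\right)$ guarantees the existence of a nearby vertex, and Lemma 3.2 guarantees uniqueness of the packing point associated with any given vertex.

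First, I would define $f$ as follows. Given $\mathbf{p}\in P_{n}\left(r\right)\cap S_{n}\left(r\right)$, the definition $S_{n}\left(r\right)=\left(V_{n}+2r\,\inter\left(C_{n}^{*}\right)\right)\cap C_{n}^{*}$ yields a decomposition $\mathbf{p}=\mathbf{v}+2r\mathbf{u}$ with $\mathbf{v}\in V_{n}$ and $\mathbf{u}\in\inter\left(C_{n}^{*}\right)$. Since $\left\Vert \mathbf{u}\right\Vert _{1}<1$, this gives $\left\Vert \mathbf{p}-\mathbf{v}\right\Vert _{1}<2r$, i.e.\ $\mathbf{v}\in C\left(\mathbf{p},2r\right)$. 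Pick any such vertex and call it $f\left(\mathbf{p}\right)$.

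Next, I would verify injectivity of $f$. Suppose $\mathbf{p},\mathbf{q}\in P_{n}\left(r\right)\cap S_{n}\left(r\right)$ satisfy $f\left(\mathbf{p}\right)=f\left(\mathbf{q}\right)=\mathbf{v}$. Then by construction $\mathbf{v}\in C\left(\mathbf{p},2r\right)\cap C\left(\mathbf{q},2r\right)$, so Lemma 3.2 applied to the common vertex $\mathbf{v}$ forces $\mathbf{p}=\mathbf{q}$. Thus $f$ is injective into a set of cardinality $2n$, which concludes the argument.

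Honestly, there is no real obstacle here; all the geometric content has been packaged into the preceding two lemmas. Lemma 2.1 provided the essential containment ruling out two packing points near a single vertex, Lemma 3.2 promoted this to the uniqueness statement in the form actually used, and the present lemma is then a pure pigeonhole argument over the $2n$ vertices of $C_{n}^{*}$. The only thing to be careful about is ensuring that the open/closed distinction in the definition of $S_{n}\left(r\right)$ is used correctly, so that the strict inequality $\left\Vert \mathbf{p}-\mathbf{v}\right\Vert _{1}<2r$ (rather than $\leq 2r$) is available, which is exactly the hypothesis that feeds Lemma 3.2.
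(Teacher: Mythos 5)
Your proof is correct and is essentially the paper's argument in mirror form: the paper builds a map from nearby vertices onto the points of $P_{n}\left(r\right)\cap S_{n}\left(r\right)$ (well-defined by Lemma 3.2, surjective by the definition of $S_{n}\left(r\right)$), while you build the reverse map from points to vertices and use Lemma 3.2 for injectivity; both are the same pigeonhole over the $2n$ vertices.
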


\begin{proof}
Define $V_{n}\left(r\right)$ by 
\[
V_{n}\left(r\right)=\left\{ \mathbf{v}\in V_{n}\mid\text{there exists a }\mathbf{p}\in P_{n}\left(r\right)\cap S_{n}\left(r\right)\text{ such that }\left|\left|\mathbf{v}-\mathbf{p}\right|\right|_{1}<2r\right\} 
\]
 (this set may be empty) and a map $f:V_{n}\left(r\right)\rightarrow P_{n}\left(r\right)\cap S_{n}\left(r\right)$
where $f\left(\mathbf{v}\right)$ is the point $\mathbf{p}\in P_{n}\left(r\right)\cap S_{n}\left(r\right)$
such that $\left|\left|\mathbf{v}-\mathbf{p}\right|\right|_{1}<2r$. 

First we need to show that $f$ is well-defined. Let $\mathbf{p},\mathbf{q}\in P_{n}\left(r\right)\cap S_{n}\left(r\right)$
be points such that $\mathbf{v}\in C\left(\mathbf{p},2r\right)\cap C\left(\mathbf{q},2r\right)$
for some $\mathbf{v}\in V_{n}$, then $\mathbf{p}=\mathbf{q}$ by
Lemma 3.2, which justifies the use of the words ``the point'' in
the definition of $f$. From the definition of $S_{n}\left(r\right)$,
every point $\mathbf{p}\in P_{n}\left(r\right)\cap S_{n}\left(r\right)$
has the property that there is some $\mathbf{v}\in V_{n}$ such that
$\left|\left|\mathbf{p}-\mathbf{v}\right|\right|_{1}<2r$, so $f$
is surjective. Both the domain and range of $f$ are finite sets,
so the cardinality of the range can be bounded above by 
\[
\left|P_{n}\left(r\right)\cap S_{n}\left(r\right)\right|=\left|\Ran\left(f\right)\right|\leq\left|\Dom\left(f\right)\right|\leq\left|V_{n}\left(r\right)\right|=2n,
\]
 completing the proof.
\end{proof}
Now we prove Theorem 1.1. With the preparation above, the proof is
mostly a matter of putting together earlier lemmas.
\begin{proof}[\emph{Proof of Theorem 1.1}]
 Assume that $P_{n}\left(r\right)$ is nonempty, otherwise $\left|P_{n}\left(r\right)\right|=0$
and there is nothing to prove. Since $r\in\left(1-\frac{1}{n},1\right]$,
it follows from Lemma 3.1 that $C_{n}^{*}=S_{n}$, so $\left|P_{n}\left(r\right)\cap S_{n}\left(r\right)\right|$
is nonempty. Then Lemma 3.3 shows that $\left|P_{n}\left(r\right)\right|=\left|P_{n}\left(r\right)\cap S_{n}\left(r\right)\right|\leq2n$.
This inequality holds for any $P_{n}\left(r\right)$, so 
\[
\gamma\left(C_{n}^{*},r\right)\leq2n\qquad\text{for }n\geq2\text{ and }r\in\left(1-\frac{1}{n},1\right]\text{.}
\]
 The upper bound of $2n$ is achieved by $V_{n}=\left\{ \pm\mathbf{e}_{i}\mid i\in\left\{ 1,\ldots,n\right\} \right\} $
as a packing set of $rC_{n}^{*}$, so 
\[
\gamma\left(C_{n}^{*},r\right)=2n\qquad\text{for }n\geq2\text{ and }r\in\left(1-\frac{1}{n},1\right]\text{.}
\]
 The interval $r\in\left(1-\frac{1}{n},1\right]$ cannot be extended
in either direction, because $\gamma\left(C_{n}^{*},r\right)=1$ for
$r>1$ and in Proposition 5.1 we construct a packing set of $rC_{n}^{*}$,
for $r\leq1-\frac{1}{n}$, with $2n+2$ points in $C_{n}^{*}$. For
such $r$, $S_{n}\left(r\right)\subsetneq C_{n}^{*}$ and specifically
the centroid of each facet is not in $S_{n}\left(r\right)$ (cf. Subsection
4.1), so the set consisting of the $2n$ vertices of $C_{n}^{*}$
and the two centroids on opposing facets of $C_{n}^{*}$ is a packing
set of $rC_{n}^{*}$. Therefore, $r\in\left(1-\frac{1}{n},1\right]$
is the largest possible interval such that $\gamma\left(C_{n}^{*},r\right)=2n$
is true.
\end{proof}

\section{Proof of Theorem 1.2 (the $\boldsymbol{3}$-dimensional case)}

When $r\leq\frac{2}{3}$, the set $S_{3}\left(r\right)$ no longer
covers all of $C_{3}^{*}$, so unlike the $n$-dimensional case above,
the proofs for the three-dimensional cases require consideration of
the remainder $C_{3}^{*}\backslash S_{3}\left(r\right)$.

\subsection{Notation and preliminaries for dimension $\boldsymbol{3}$}

Here we collect some lemmas and notation for the three-dimensional
cases. Let $r\in\left[\frac{1}{2},\frac{2}{3}\right]$. Recall that
\[
V_{3}=\vertices\left(C_{3}^{*}\right)=\left\{ \pm\mathbf{e}_{1},\pm\mathbf{e}_{2},\pm\mathbf{e}_{3}\right\} 
\]
 and 
\begin{eqnarray*}
S_{3}\left(r\right) & = & \left(V_{3}+2r\inter\left(C_{n}^{*}\right)\right)\cap C_{n}^{*}\\
 & = & \bigcup_{i=1}^{3}\left(C\left(\mathbf{e}_{i},2r\right)\cup C\left(-\mathbf{e}_{i},2r\right)\right).
\end{eqnarray*}

For any $\sigma_{1},\sigma_{2},\sigma_{3}\in\left\{ -1,1\right\} $,
define the following subsets of $\mathbb{R}^{3}$: 
\[
V\left(r,\left(\sigma_{1},\sigma_{2},\sigma_{3}\right)\right):=\left\{ \begin{pmatrix}\sigma_{1}\left(2r-1\right)\\
\sigma_{2}\left(2r-1\right)\\
\sigma_{3}\left(2r-1\right)
\end{pmatrix},\begin{pmatrix}\sigma_{1}\left(1-r\right)\\
\sigma_{2}\left(1-r\right)\\
\sigma_{3}\left(2r-1\right)
\end{pmatrix},\begin{pmatrix}\sigma_{1}\left(1-r\right)\\
\sigma_{2}\left(2r-1\right)\\
\sigma_{3}\left(1-r\right)
\end{pmatrix},\begin{pmatrix}\sigma_{1}\left(2r-1\right)\\
\sigma_{2}\left(1-r\right)\\
\sigma_{3}\left(1-r\right)
\end{pmatrix}\right\} .
\]

\noindent \begin{center}
\includegraphics[scale=0.3]{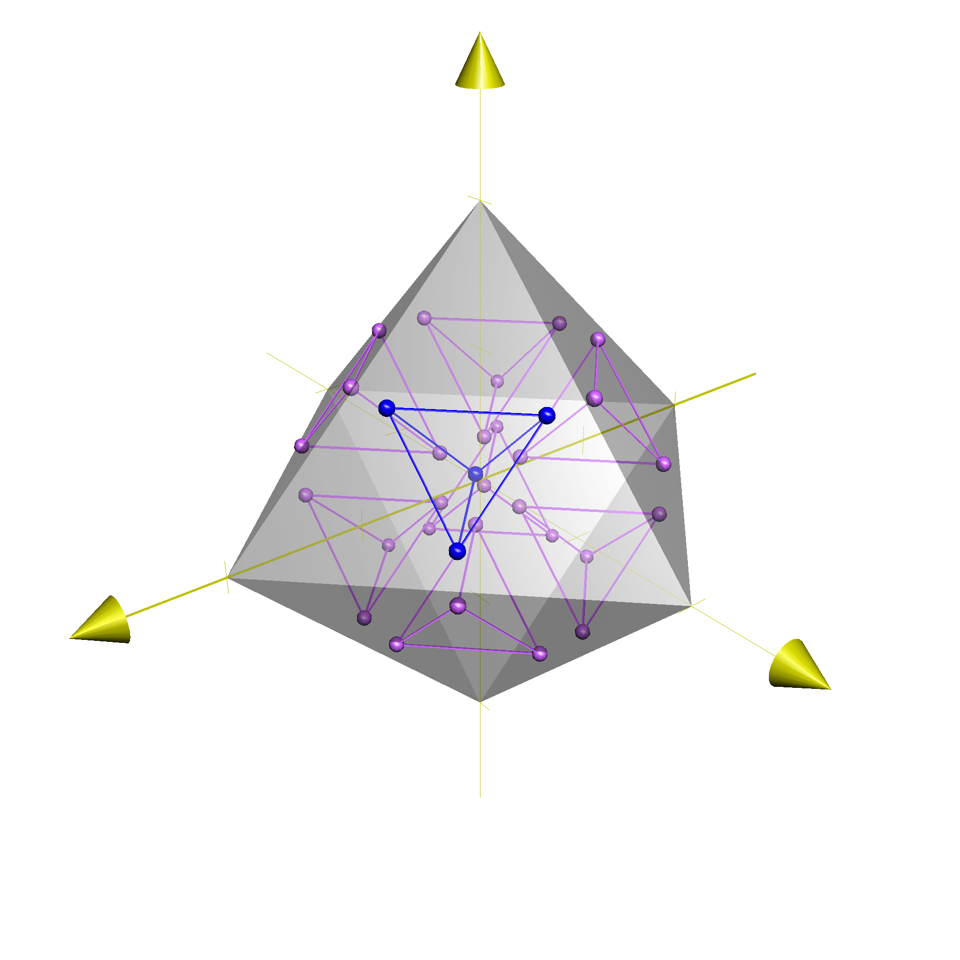}
\par\end{center}

\noindent \begin{center}
\emph{Figure 4.1}. The grey cross-polytope is the set $C_{3}^{*}$,
the blue spheres are the points of $V\left(\frac{11}{20},\left(1,1,1\right)\right)$,
and the purple spheres are the points of $V\left(\frac{11}{20},\left(\sigma_{1},\sigma_{2},\sigma_{3}\right)\right)$
for $\sigma_{1},\sigma_{2},\sigma_{3}\in\left\{ -1,1\right\} $ and
not all equal to $1$.
\par\end{center}

The endpoints of the range $r\in\left[\frac{1}{2},\frac{2}{3}\right]$
are $\frac{2}{3}$ and $\frac{1}{2}$. For $r=\frac{2}{3}$ the set
reduces to 
\[
V\left(\frac{2}{3},\left(1,1,1\right)\right)=\left\{ \begin{pmatrix}\frac{1}{3}\\
\frac{1}{3}\\
\frac{1}{3}
\end{pmatrix}\right\} ,
\]
 the centroid of the facet $\conv\left\{ \mathbf{e}_{1},\mathbf{e}_{2},\mathbf{e}_{3}\right\} $,
and when $r=\frac{1}{2}$ the set is 
\[
V\left(\frac{1}{2},\left(1,1,1\right)\right)=\left\{ \begin{pmatrix}0\\
0\\
0
\end{pmatrix},\begin{pmatrix}\frac{1}{2}\\
\frac{1}{2}\\
0
\end{pmatrix},\begin{pmatrix}\frac{1}{2}\\
0\\
\frac{1}{2}
\end{pmatrix},\begin{pmatrix}0\\
\frac{1}{2}\\
\frac{1}{2}
\end{pmatrix}\right\} ,
\]
 which contains the midpoints of the edges of the facet $\conv\left\{ \mathbf{e}_{1},\mathbf{e}_{2},\mathbf{e}_{3}\right\} $.

Subsets defined using midpoints of edges are used to solve the related
problems of finding upper bounds for $k\left(C_{3}^{*}\right)$ and
$M\left(C_{n}^{*},B_{n},m\right)$ for some values of $n$ and $m$.
To find the kissing number of the cross-polytope, Larman and Zong
\cite{LarmanZong1999} divided the boundary of the cross-polytope
into the union of $18$ subsets including sets of the form 
\[
\relint\left(\left(\frac{1}{2}\mathbf{m}+\frac{1}{2}C_{3}^{*}\right)\cap C_{3}^{*}\right),
\]
 where $\mathbf{m}$ is a midpoint of an edge in $V_{3}$, and showed
that each subset could contain the center of at most one cross-polytope,
resulting in $k\left(C_{3}^{*}\right)\leq18$. Another method to prove
that $k\left(C_{3}^{*}\right)\leq18$ was used by Talata \cite{Talata1999},
who showed that any packing set achieving a kissing number of $18$
must consist of six points on the vertices, six points on the midpoints
of the edges of two opposing facets, and the remaining points on the
hexagon passing through the midpoints of the other edges. B{\"o}r{\"o}czky
Jr. and Wintsche \cite{BoeroeczkyWintsche2000} use sets defined by vertices
and midpoints of edges to determine an upper bound for $M\left(C_{n}^{*},B_{n},m\right)$
where $n\geq3$ and $m\in\left\{ 4,\ldots,2n\right\} $.

For a packing set $P_{3}\left(r\right)$ and a set $\conv\left(V\left(r,\left(\sigma_{1},\sigma_{2},\sigma_{3}\right)\right)\right)$,
$\sigma_{1},\sigma_{2},\sigma_{3}\in\left\{ -1,1\right\} $, call
$\conv\left(V\right)$ a blocked set of $P_{3}\left(r\right)$ if
$\conv\left(V\left(r,\left(\sigma_{1},\sigma_{2},\sigma_{3}\right)\right)\right)$
does not contain any points of $P_{3}\left(r\right)$.

First we show that $C_{3}^{*}\backslash S_{3}\left(r\right)$ can
be written in terms of $V\left(r,\left(\sigma_{1},\sigma_{2},\sigma_{3}\right)\right)$.
\begin{lem}
Let $r\in\left(\frac{1}{2},\frac{2}{3}\right]$. For any $\sigma_{1},\sigma_{2},\sigma_{3}\in\left\{ -1,1\right\} $,
define the following subsets of $C_{3}^{*}$: 
\[
R\left(r,\left(\sigma_{1},\sigma_{2},\sigma_{3}\right)\right):=\left(C_{3}^{*}\backslash S_{3}\left(r\right)\right)\cap\left\{ \mathbf{x}\in\mathbb{R}^{3}\mid\sigma_{1}x_{1},\sigma_{2}x_{2},\sigma_{3}x_{3}\geq0\right\} .
\]
 Then $R\left(r,\left(\sigma_{1},\sigma_{2},\sigma_{3}\right)\right)=\conv\left(V\left(r,\left(\sigma_{1},\sigma_{2},\sigma_{3}\right)\right)\right)$
and 
\[
C_{3}^{*}\backslash S_{3}\left(r\right)=\bigcup_{\sigma_{1},\sigma_{2},\sigma_{3}\in\left\{ -1,1\right\} }\conv\left(V\left(r,\left(\sigma_{1},\sigma_{2},\sigma_{3}\right)\right).\right)
\]
\end{lem}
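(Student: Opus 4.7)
The plan is to reduce to the positive octant via the reflective symmetry of $C_3^*$, write the explicit linear inequality description of $R(r,(1,1,1))$, and then enumerate its vertices by a short LP-type argument. The reflection $\mathbf{x}\mapsto(\sigma_1x_1,\sigma_2x_2,\sigma_3x_3)$ is an isometry of $\mathbb{R}^3$ preserving both $C_3^*$ and $V_3$, hence preserving $S_3(r)$ and mapping the orthant $\{\mathbf{x}:\sigma_ix_i\ge0\}$ to the positive octant. It sends $V(r,(1,1,1))$ to $V(r,\sigma)$, so it suffices to prove $R(r,(1,1,1))=\conv(V(r,(1,1,1)))$.

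For $\mathbf{x}$ in the positive octant, direct computation gives
\[
\|\mathbf{x}-\mathbf{e}_j\|_1=1+\sum_ix_i-2x_j,\qquad \|\mathbf{x}+\mathbf{e}_j\|_1=1+\sum_ix_i.
\]
Therefore $R(r,(1,1,1))$ is cut out by the eight linear inequalities $x_i\ge 0$ ($i=1,2,3$), $\sum_ix_i\le1$, $2x_j-\sum_ix_i\le1-2r$ ($j=1,2,3$), and $\sum_ix_i\ge2r-1$. The next step is to enumerate vertices by taking triples of tight constraints. Two preliminary reductions kill most combinations: first, since $r>\tfrac12$ gives $1-2r<0$, setting $x_1=0$ and subtracting the $j=2$ and $j=3$ inequalities forces $|x_2-x_3|\le 1-2r<0$, impossible; so no vertex lies on a face $x_i=0$ and the inequalities $x_i\ge 0$ can be ignored. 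Second, $\sum x_i=1$ and $\sum x_i=2r-1$ cannot both hold because $r<1$.

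The remaining cases are: (i) all three inequalities $2x_j-\sum x_i=1-2r$ tight, which yields $(2r-1,2r-1,2r-1)$; (ii) $\sum x_i=1$ together with two of the three $2x_j-\sum x_i=1-2r$, which yields the three permutations of $(1-r,1-r,2r-1)$; (iii) $\sum x_i=2r-1$ together with two of those three inequalities, which (by the same computation as (i)/(ii)) forces some coordinate to vanish and is ruled out by the first reduction. Plugging each of the four surviving candidates into all eight defining inequalities and using $r\in(\tfrac12,\tfrac23]$ confirms feasibility, so they constitute exactly the vertex set, proving $R(r,(1,1,1))=\conv(V(r,(1,1,1)))$. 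For the second assertion, one writes $C_3^*=\bigcup_{\sigma}\{\mathbf{x}:\sigma_ix_i\ge 0\}$; intersecting with $C_3^*\setminus S_3(r)$ distributes over this union and gives $\bigcup_\sigma R(r,\sigma)=\bigcup_\sigma\conv(V(r,\sigma))$.

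The main obstacle is keeping the vertex enumeration uniform across the whole interval $r\in(\tfrac12,\tfrac23]$: the listed quadruple degenerates at the endpoint $r=\tfrac23$ (where $2r-1=1-r=\tfrac13$ so all four candidates collapse to the facet centroid), but the case analysis above is insensitive to this coincidence and the conclusion $R=\conv(V)$ remains valid, with $\conv(V)$ simply reducing to a single point.
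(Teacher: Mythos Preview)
Your proof is correct and follows essentially the same strategy as the paper: reduce to the positive octant by the sign symmetry, write down the half-space description of $R(r,(1,1,1))$, and identify the resulting polytope with $\conv V(r,(1,1,1))$. The execution differs slightly: the paper writes down four inequalities and checks the two inclusions $R\subseteq\{\text{four half-spaces}\}$ and $\{\text{four half-spaces}\}\subseteq R$ by direct manipulation, whereas you carry out an explicit vertex enumeration on the full system of eight inequalities. Your treatment is in fact a bit more careful than the paper's on two points: you explicitly include and then dispose of the constraint $\sum_i x_i\ge 2r-1$ coming from the balls at $-\mathbf{e}_j$ (the paper silently drops these), and your LP argument actually proves that the four listed points are the vertices, whereas the paper simply asserts that $\conv V$ coincides with the intersection of its four half-spaces. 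One cosmetic remark: in your first reduction the word ``subtracting'' is not quite right---you are using the $j=2$ and $j=3$ inequalities separately (each gives one direction of $|x_2-x_3|\le 1-2r$), not subtracting them---but the conclusion is correct.
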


\begin{proof}
Without loss of generality, assume that $\sigma_{1}=\sigma_{2}=\sigma_{3}=1$,
and we will show that $R\left(r,\left(1,1,1\right)\right)=\conv\left(V\left(r,\left(1,1,1\right)\right)\right)$.
The set $R\left(r,\left(1,1,1\right)\right)$ is the subset of the
unit cross-polytope with all nonnegative coordinates and excluding
the sets $C\left(\mathbf{e}_{i},2r\right)$ for $i\in\left\{ 1,2,3\right\} $,
and the set $\conv\left(V\left(r,\left(1,1,1\right)\right)\right)$
is the intersection of the inequalities $-x_{1}-x_{2}+x_{3}\geq-\left(2r-1\right)$,
$x_{1}-x_{2}-x_{3}\leq-\left(2r-1\right)$, $-x_{1}+x_{2}-x_{3}\leq-\left(2r-1\right)$,
and $x_{1}+x_{2}+x_{3}\leq1$, since the four points in $V\left(r,\left(1,1,1\right)\right)$
satisfy each inequality. We will show that $R\left(r,\left(1,1,1\right)\right)$
is also the intersection of these inequalities.
\noindent \begin{center}
\includegraphics[scale=0.3]{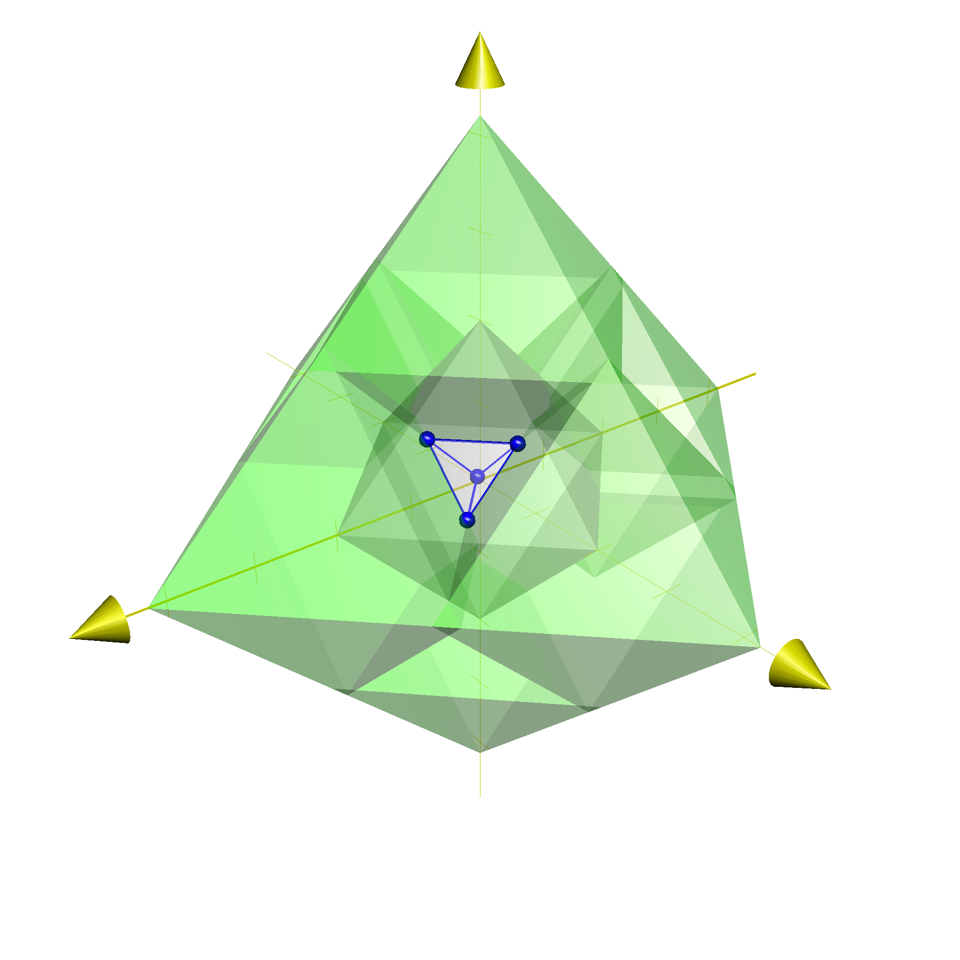}
\par\end{center}

\noindent \begin{center}
\emph{Figure 4.2}. The grey cross-polytope is the set $C_{3}^{*}$,
the green cross-polytopes are the sets $\overline{C\left(\mathbf{x},\frac{11}{20}\right)}$
for $\mathbf{x}\in V_{3}$, and the blue spheres are the points of
$V\left(\frac{11}{20},\left(1,1,1\right)\right)$.
\par\end{center}

Let $\mathbf{x}\in R\left(r,\left(1,1,1\right)\right)$. Then $\mathbf{x}\in C_{3}^{*}$
so $x_{1}+x_{2}+x_{3}\leq1$, and in addition, $\mathbf{x}\notin C\left(\mathbf{e}_{1},2r\right)$
so 
\begin{eqnarray*}
\left|x_{1}-1\right|+\left|x_{2}\right|+\left|x_{3}\right| & \geq & 2r\\
1-x_{1}+x_{2}+x_{3} & \geq & 2r\\
x_{1}-x_{2}-x_{3} & \leq & -\left(2r-1\right).
\end{eqnarray*}
 Similarly, $\mathbf{x}\notin C\left(\mathbf{e}_{2},2r\right)$ and
$\mathbf{x}\notin C\left(\mathbf{e}_{3},2r\right)$ so $-x_{1}+x_{2}-x_{3}\leq-\left(2r-1\right)$
and $x_{1}+x_{2}-x_{3}\geq2r-1$. That proves $R\left(r,\left(1,1,1\right)\right)\subseteq\conv\left(V\left(r,\left(1,1,1\right)\right)\right)$.

For the converse, let $\mathbf{x}\in\conv\left(V\left(r,\left(1,1,1\right)\right)\right)$.
Since $\frac{1}{2}\leq r\leq\frac{2}{3}$, both $2r-1\geq0$ and $1-r\geq0$,
so the four points in $V\left(r,\left(1,1,1\right)\right)$ all have
nonnegative coordinates. Also, $\left|\left|\mathbf{v}\right|\right|_{1}\leq1$
for all $\mathbf{v}\in V\left(r,\left(1,1,1\right)\right)$, and since
$\mathbf{x}$ is in the convex hull of $V\left(r,\left(1,1,1\right)\right)$,
it is also true that $x_{1}+x_{2}+x_{3}\leq1$. Then $x_{1},x_{2},x_{3}\geq0$
and $x_{1}+x_{2}+x_{3}\leq1$ imply that $\mathbf{x}\in C_{n}^{*}$.
Also, $\mathbf{x}$ satisfies $x_{1}-x_{2}-x_{3}\leq-\left(2r-1\right)$,
then 
\begin{eqnarray*}
x_{1}-x_{2}-x_{3} & \leq & -\left(2r-1\right)\\
\left(1-x_{1}\right)+x_{2}+x_{3} & \geq & 2r\\
\left|x_{1}-1\right|+\left|x_{2}\right|+\left|x_{3}\right| & \geq & 2r,
\end{eqnarray*}
 where the last inequality holds because $x_{2},x_{3}\geq0$ and ${\textstyle 0\leq x_{1}\leq\frac{1}{2}}$,
so $\mathbf{x}\notin C\left(\mathbf{e}_{1},2r\right)$. Similarly,
$-x_{1}+x_{2}-x_{3}\leq-\left(2r-1\right)$ and $-x_{1}+x_{2}-x_{3}\leq-\left(2r-1\right)$
so $\mathbf{x}\notin C\left(\mathbf{e}_{2},2r\right)$ and $\mathbf{x}\notin C\left(\mathbf{e}_{3},2r\right)$.
Hence 
\[
\mathbf{x}\in\left(C_{3}^{*}\cap\left\{ x_{1},x_{2},x_{3}\geq0\right\} \right)\left\backslash \left(\bigcup_{i=1}^{3}C\left(\mathbf{e}_{i},2r\right)\right)\right.=R\left(r,\left(1,1,1\right)\right).
\]

To complete the proof of the lemma, note that for any $\mathbf{x}\in\mathbb{R}^{3}$,
let $\sigma_{i}=\frac{x_{i}}{\left|x_{i}\right|}$ if $x_{i}\neq0$
and $\sigma_{i}=1$ if $x_{i}=0$, then $\sigma_{1}x_{1},\sigma_{2}x_{2},\sigma_{3}x_{3}\geq0$,
so $C_{3}^{*}\backslash S_{3}\left(r\right)$ is indeed covered by
all the $R\left(r,\left(\sigma_{1},\sigma_{2},\sigma_{3}\right)\right)$,
$\sigma_{1},\sigma_{2},\sigma_{3}\in\left\{ -1,1\right\} $, resulting
in 
\[
C_{3}^{*}\backslash S_{3}\left(r\right)=\bigcup_{\sigma_{1},\sigma_{2},\sigma_{3}\in\left\{ -1,1\right\} }R\left(r,\left(\sigma_{1},\sigma_{2},\sigma_{3}\right)\right)=\bigcup_{\sigma_{1},\sigma_{2},\sigma_{3}\in\left\{ -1,1\right\} }\conv\left(\left(r,\left(\sigma_{1},\sigma_{2},\sigma_{3}\right)\right)\right).
\]
\end{proof}
From this lemma, the cross-polytope $C_{3}^{*}$ is the union of $S_{3}\left(r\right)$
and the eight regions $\conv\left(V\left(r,\left(\sigma_{1},\sigma_{2},\sigma_{3}\right)\right)\right)$.
By Lemma 3.3, 
\[
\left|P_{3}\left(r\right)\cap S_{3}\left(r\right)\right|\leq6,
\]
 but for $r\in\left(0,\frac{2}{3}\right]$, some points of $P_{3}\left(r\right)$
may be contained in one or more of the sets $\conv\left(V\left(r,\left(\sigma_{1},\sigma_{2},\sigma_{3}\right)\right)\right)$.

For $r\in\left(\frac{1}{2},\frac{2}{3}\right]$, each $V\left(r,\left(\sigma_{1},\sigma_{2},\sigma_{3}\right)\right)$
cannot contain more than one point of $P_{3}\left(r\right)$, which
gives an upper bound of $\left|P_{3}\left(r\right)\right|\leq14$
proved in Subsection 4.4. When $r\in\left(\frac{4}{7},\frac{2}{3}\right]$,
the required minimum distance between points of $P_{3}\left(r\right)$
is large enough so that the presence of a point of $P_{3}\left(r\right)$
in one set $\conv\left(V\left(r\left(\sigma_{1},\sigma_{2},\sigma_{3}\right)\right)\right)$
may imply that one other set $\conv\left(V\left(r\left(\sigma_{1}',\sigma_{2}',\sigma_{3}'\right)\right)\right)$,
$\left(\sigma_{1},\sigma_{2},\sigma_{3}\right)\neq\left(\sigma_{1}',\sigma_{2}',\sigma_{3}'\right)$,
cannot contain any points in $P_{3}\left(r\right)$. Then it is possible
to obtain an upper bound of $12$, and the proof in Subsection 4.3
uses a more complicated argument involving the position of $\mathbf{p}$
in $V\left(r,\left(\sigma_{1},\sigma_{2},\sigma_{3}\right)\right)$.
In Subsection 4.2 we prove that when $r\in\left(\frac{3}{5},\frac{2}{3}\right]$,
a point $\mathbf{p}\in P_{3}\left(r\right)\cap V\left(r,\left(\sigma_{1},\sigma_{2},\sigma_{3}\right)\right)$
implies that three other sets of the form $\conv\left(V\left(r\left(\sigma_{1},\sigma_{2},\sigma_{3}\right)\right)\right)$
cannot contain any points in $P_{3}\left(r\right)$.

\subsection{Proof of Theorem 1.2 (a) (the $\boldsymbol{r\in\left(\frac{3}{5},\frac{2}{3}\right]}$
case)}
\begin{lem}
Let $r\in\left(\frac{3}{5},\frac{2}{3}\right]$ and $\mathbf{p}\in P_{3}\left(r\right)$.
If $\mathbf{p}\in\conv\left(V\left(r,\left(\sigma_{1},\sigma_{2},\sigma_{3}\right)\right)\right)$
for any $\sigma_{1},\sigma_{2},\sigma_{3}\in\left\{ 1,1\right\} $,
then $\conv\left(V\left(r,\left(-\sigma_{1},\sigma_{2},\sigma_{3}\right)\right)\right)$,
$\conv\left(V\left(r,\left(\sigma_{1},-\sigma_{2},\sigma_{3}\right)\right)\right)$,
and $\conv\left(V\left(r,\left(\sigma_{1},\sigma_{2},-\sigma_{3}\right)\right)\right)$
are blocked sets of $P_{3}\left(r\right)$.
\end{lem}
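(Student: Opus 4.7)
The plan is to show that for any two points $\mathbf{p}$ and $\mathbf{q}$ lying respectively in $\conv(V(r,(\sigma_{1},\sigma_{2},\sigma_{3})))$ and $\conv(V(r,(\sigma_{1}',\sigma_{2}',\sigma_{3}')))$, where the two sign tuples differ in exactly one coordinate, the $l_{1}$-distance satisfies $\|\mathbf{p}-\mathbf{q}\|_{1}<2r$. Since $r>\frac{1}{2}$, the two tetrahedra lie in distinct open octants, so $\mathbf{p}\neq\mathbf{q}$, and then the packing condition $\|\mathbf{p}-\mathbf{q}\|_{1}\geq 2r$ is violated, contradicting $\mathbf{q}\in P_{3}(r)$. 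By the sign-flip and coordinate-permutation symmetries of the cross-polytope, it suffices to consider the case $(\sigma_{1},\sigma_{2},\sigma_{3})=(1,1,1)$ and show that $\conv(V(r,(-1,1,1)))$ is blocked; the other two cases follow by relabeling axes.

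The key structural observation is that every vertex of $V(r,(1,1,1))$ has each coordinate equal to either $2r-1$ or $1-r$, and since $r\leq\frac{2}{3}$ we have $2r-1\leq 1-r$. Hence for $\mathbf{p}\in\conv(V(r,(1,1,1)))$ each $p_{i}$ is a convex combination of $2r-1$ and $1-r$, so $p_{i}\in[2r-1,1-r]$. Reflecting through the first coordinate, $\mathbf{q}\in\conv(V(r,(-1,1,1)))$ satisfies $q_{1}\in[-(1-r),-(2r-1)]$ and $q_{2},q_{3}\in[2r-1,1-r]$.

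With these coordinate ranges, bound each term: $|p_{1}-q_{1}|=p_{1}-q_{1}\leq 2(1-r)$, and for $i\in\{2,3\}$ both $p_{i}$ and $q_{i}$ lie in an interval of length $2-3r$, so $|p_{i}-q_{i}|\leq 2-3r$. Summing,
\[
\|\mathbf{p}-\mathbf{q}\|_{1}\leq 2(1-r)+2(2-3r)=6-8r.
\]
The hypothesis $r>\frac{3}{5}$ is precisely what gives $6-8r<2r$, so $\|\mathbf{p}-\mathbf{q}\|_{1}<2r$, yielding the desired contradiction. The argument reduces to a coordinatewise diameter calculation, and the only point worth emphasizing is that the threshold $\frac{3}{5}$ in the hypothesis is exactly where the sum-of-coordinate-diameters bound $6-8r$ ceases to beat the required gap $2r$; no finer geometric analysis of the tetrahedra is needed.
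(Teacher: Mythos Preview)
Your proof is correct and, if anything, cleaner than the paper's. Both arguments reduce by symmetry to the pair $(\sigma_1,\sigma_2,\sigma_3)=(1,1,1)$ versus $(-1,1,1)$ and then show that any two points in the respective tetrahedra are at $l_1$-distance less than $2r$. The paper keeps $\mathbf{p}\in\conv(V(r,(1,1,1)))$ generic and checks it against each of the four vertices of $V(r,(-1,1,1))$ individually, invoking the facet constraint $p_1+p_2+p_3\leq 1$ together with the lower bounds $p_2,p_3\geq 2r-1$; different vertex pairings produce different upper bounds ($2-2r$ in some cases, $6-8r$ in the worst case). You instead observe that every coordinate of every vertex of $V(r,(1,1,1))$ lies in $\{2r-1,1-r\}$, so the whole tetrahedron sits in the cube $[2r-1,1-r]^3$, and likewise for the reflected tetrahedron; the coordinatewise diameter bound $2(1-r)+2(2-3r)=6-8r$ then falls out in one line. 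Your approach trades the sharper per-vertex estimates for a single uniform bound that still suffices exactly when $r>\tfrac{3}{5}$, and it makes transparent why $\tfrac{3}{5}$ is the threshold.
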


\begin{proof}
Without loss of generality, assume that $\sigma_{1}=\sigma_{2}=\sigma_{3}=1$.
To show that $\conv\left(V\left(r,\left(-\sigma_{1},\sigma_{2},\sigma_{3}\right)\right)\right)$
is a blocked set of $P_{3}\left(r\right)$, it suffices to show that
$\left|\left|\mathbf{p}-\mathbf{y}'\right|\right|_{1}<2r$ for all
\[
\mathbf{y}'\in V\left(r,\left(-1,1,1\right)\right)=\left\{ \begin{pmatrix}-\left(2r-1\right)\\
2r-1\\
2r-1
\end{pmatrix},\begin{pmatrix}-\left(1-r\right)\\
1-r\\
2r-1
\end{pmatrix},\begin{pmatrix}-\left(1-r\right)\\
2r-1\\
1-r
\end{pmatrix},\begin{pmatrix}-\left(2r-1\right)\\
1-r\\
1-r
\end{pmatrix}\right\} ,
\]
 then by the convexity of $V\left(r,\left(-1,1,1\right)\right)$,
the statement $\left|\left|\mathbf{p}-\mathbf{y}\right|\right|_{1}<2r$
holds true for any $\mathbf{y}\in\conv\left(V\left(r,\left(-1,1,1\right)\right)\right)$.
The calculations are as follows: 
\begin{eqnarray*}
\left|\left|\mathbf{p}-\begin{pmatrix}-\left(2r-1\right)\\
2r-1\\
2r-1
\end{pmatrix}\right|\right|_{1} & = & \left|p_{1}+\left(2r-1\right)\right|+\left|p_{2}-\left(2r-1\right)\right|+\left|p_{3}-\left(2r-1\right)\right|\\
 & = & \left(p_{1}+\left(2r-1\right)\right)+\left(\left(2r-1\right)-p_{2}\right)+\left(\left(2r-1\right)-p_{3}\right)\\
 & = & p_{1}-p_{2}-p_{3}+6r-3\\
 & \leq & 1-2p_{2}-2p_{3}+6r-3\\
 & \leq & 1-2\left(2r-1\right)-2\left(2r-1\right)+6r-3\\
 & = & 2-2r\\
 & < & 2r,
\end{eqnarray*}
 
\begin{eqnarray*}
\left|\left|\mathbf{p}-\begin{pmatrix}-\left(1-r\right)\\
1-r\\
2r-1
\end{pmatrix}\right|\right|_{1} & = & \left|p_{1}+\left(1-r\right)\right|+\left|p_{2}-\left(1-r\right)\right|+\left|p_{3}-\left(2r-1\right)\right|\\
 & = & \left(p_{1}+\left(1-r\right)\right)+\left(\left(1-r\right)-p_{2}\right)+\left(\left(2r-1\right)-p_{3}\right)\\
 & = & p_{1}-p_{2}-p_{3}+1\\
 & \leq & 1-2p_{2}-2p_{3}+1\\
 & \leq & 1-2\left(2r-1\right)-2\left(2r-1\right)+1\\
 & = & 6-8r\\
 & < & 2r,
\end{eqnarray*}
 and similarly 
\[
\left|\left|\mathbf{p}-\begin{pmatrix}-\left(1-r\right)\\
2r-1\\
1-r
\end{pmatrix}\right|\right|_{1}<2r
\]
 and 
\begin{eqnarray*}
\left|\left|\mathbf{p}-\begin{pmatrix}-\left(2r-1\right)\\
1-r\\
1-r
\end{pmatrix}\right|\right|_{1} & = & \left|p_{1}+\left(2r-1\right)\right|+\left|p_{2}-\left(1-r\right)\right|+\left|p_{3}-\left(1-r\right)\right|\\
 & = & \left(p_{1}+\left(2r-1\right)\right)+\left(\left(1-r\right)-p_{2}\right)+\left(\left(1-r\right)-p_{3}\right)\\
 & = & p_{1}-p_{2}-p_{3}+1\\
 & < & 2r.
\end{eqnarray*}

By the symmetry of $V\left(r,\left(-1,1,1\right)\right)$, $V\left(r,\left(1,-1,1\right)\right)$,
and $V\left(r,\left(1,1,-1\right)\right)$, it follows that $\left|\left|\mathbf{p}-\mathbf{y}\right|\right|_{1}<2r$
for any $\mathbf{y}\in V\left(r,\left(-1,1,1\right)\right)\cup V\left(r,\left(1,-1,1\right)\right)\cup V\left(r,\left(1,1,-1\right)\right)$,
and so these three sets are blocked sets of $P_{3}\left(r\right)$.
\end{proof}
If $P_{3}\left(r\right)\cap\left(C_{3}^{*}\backslash S_{3}\left(r\right)\right)=\emptyset$
then trivially every set of the form $\conv\left(V\left(r,\left(\sigma_{1},\sigma_{2},\sigma_{3}\right)\right)\right)$,
$\sigma_{1},\sigma_{2},\sigma_{3}\in\left\{ -1,1\right\} $, is a
blocked set of $P_{3}\left(r\right)$. Otherwise, the above lemma
implies that for any given $P_{3}\left(r\right)$, three of the eight
sets of the form $\conv\left(V\left(r,\left(\sigma_{1},\sigma_{2},\sigma_{3}\right)\right)\right)$
are blocked sets of $P_{3}\left(r\right)$. Therefore, 
\[
\left|P_{3}\left(r\right)\cap\left(C_{3}^{*}\backslash S_{3}\left(r\right)\right)\right|\leq5.
\]
 However, it is possible to lower the $5$ to a $4$ with the following
argument.
\begin{lem}
Let $r\in\left(\frac{3}{5},\frac{2}{3}\right]$. Then for any $P_{3}\left(r\right)$,
there exist at least four blocked sets of $P_{3}\left(r\right)$.
\end{lem}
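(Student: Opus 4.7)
The plan is to reformulate the statement as a counting argument on the $3$-dimensional cube graph $Q_3$. Identify the eight sign patterns $(\sigma_1,\sigma_2,\sigma_3)\in\{-1,1\}^3$ with the vertices of $Q_3$, where two vertices are adjacent precisely when they differ in exactly one coordinate; then index each of the eight sets $\conv(V(r,(\sigma_1,\sigma_2,\sigma_3)))$ by the corresponding vertex of $Q_3$. Call such a set \emph{occupied} if it contains a point of $P_3(r)$, so a set is blocked exactly when it is not occupied. The claim is then equivalent to saying that at most four vertices of $Q_3$ are occupied. Note that the case $P_3(r)\cap(C_3^*\setminus S_3(r))=\emptyset$ is trivial, since then all eight sets are blocked.

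The key input is Lemma 4.2, which can be rephrased in this language as follows: if a vertex of $Q_3$ is occupied, then each of its three $Q_3$-neighbors is blocked (the three sign patterns at Hamming distance one from $(\sigma_1,\sigma_2,\sigma_3)$ are precisely $(-\sigma_1,\sigma_2,\sigma_3)$, $(\sigma_1,-\sigma_2,\sigma_3)$, and $(\sigma_1,\sigma_2,-\sigma_3)$). In other words, the collection of occupied vertices forms an independent set in $Q_3$.

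It then suffices to bound the independence number of $Q_3$ by $4$. This is elementary: $Q_3$ carries the perfect matching consisting of the four edges $\{(\sigma_1,\sigma_2,\sigma_3),(-\sigma_1,\sigma_2,\sigma_3)\}$ indexed by $(\sigma_2,\sigma_3)\in\{-1,1\}^2$, and any independent set contains at most one endpoint from each matching edge, forcing its size to be at most $4$. Equivalently, $Q_3$ is bipartite with two color classes of size $4$, and any independent set lies in the union of them but cannot fill both. Either way, at most four of the eight sets are occupied, so at least four are blocked. Since all the geometric content has been absorbed into Lemma 4.2, I anticipate no real obstacle here; the only conceptual step is recognizing the cube-graph structure, after which the argument is a one-line bound on the independence number.
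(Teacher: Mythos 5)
Your argument is correct, and it uses the same decomposition and the same key input as the paper (the lemma that a point of $P_{3}\left(r\right)$ in $\conv\left(V\left(r,\left(\sigma_{1},\sigma_{2},\sigma_{3}\right)\right)\right)$ blocks the three sets whose sign vectors differ in exactly one coordinate), but the combinatorial finish is genuinely different. The paper argues by cases: after a point occupies, say, the $\left(1,1,1\right)$ region, its three neighbors are blocked, and then one examines the antipodal region $\left(-1,-1,-1\right)$ --- either it is blocked (giving four blocked sets) or it is occupied and blocks its own three neighbors (giving six). You instead observe that the key lemma says exactly that the occupied sign vectors form an independent set in the cube graph $Q_{3}$, and then bound the independence number by $4$ using the perfect matching along the first coordinate; since blocked means not occupied, at least four of the eight sets are blocked. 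Your route is a one-step counting bound that is uniform over all configurations and makes the extremal case (four occupied regions, as in the $10$-point construction $V_{3}\cup Q_{10}$) transparent, whereas the paper's case split is slightly longer but yields the extra information that a non-blocked antipodal region forces six blocked sets. One small caution: your parenthetical ``equivalently, $Q_{3}$ is bipartite with two classes of size $4$'' is not by itself a proof that the independence number is $4$ (bipartiteness alone does not bound independent sets by half the vertices, and an independent set in $Q_{3}$ need not lie in a single color class); but this is harmless, since your matching argument is complete and correct. You also correctly dispose of the trivial case $P_{3}\left(r\right)\cap\left(C_{3}^{*}\backslash S_{3}\left(r\right)\right)=\emptyset$, which the paper handles in the paragraph preceding the lemma.
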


\begin{proof}
Let $\mathbf{p}\in P_{3}\left(r\right)\cap\left(C_{3}^{*}\backslash S_{3}\left(r\right)\right)$.
Without loss of generality assume that there is a $\mathbf{p}\in V\left(r,\left(1,1,1\right)\right)$.
Then by Lemma 4.3, $V\left(r,\left(-1,1,1\right)\right)$, $V\left(r,\left(1,-1,1\right)\right)$,
and $V\left(r,\left(1,1,-1\right)\right)$ are blocked sets of $P_{3}\left(r\right)$.
Consider the set $V\left(r,\left(-1,-1,-1\right)\right)$. If it is
a blocked set, then there is nothing more to prove. If it is not,
then again by Lemma 4.3, $V\left(r,\left(1,-1,-1\right)\right)$,
$V\left(r,\left(-1,1,-1\right)\right)$, and $V\left(r,\left(-1,-1,1\right)\right)$
are blocked sets of $P_{3}\left(r\right)$, resulting in a total of
six blocked sets.
\end{proof}
With this lemma we can prove Theorem 1.2 (a).
\begin{proof}[\emph{Proof of Theorem 1.2 (a)}]
 Let $r\in\left(\frac{3}{5},\frac{2}{3}\right]$. As in the proof
of Theorem 1.2 (c), we split up $P_{3}\left(r\right)$ into $P_{3}\left(r\right)\cap S_{3}\left(r\right)$
and $P_{3}\left(r\right)\cap\left(C_{3}^{*}\backslash S_{3}\left(r\right)\right)$,
then 
\begin{eqnarray*}
\left|P_{3}\left(r\right)\right| & \leq & \left|P_{3}\left(r\right)\cap S_{3}\left(r\right)\right|+\left|P_{3}\left(r\right)\cap\left(C_{3}^{*}\backslash S_{3}\left(r\right)\right)\right|\\
 & \leq & 6+\left|P_{3}\left(r\right)\cap\left(C_{3}^{*}\backslash S_{3}\left(r\right)\right)\right|.
\end{eqnarray*}
 By Lemma 4.4, 
\[
\left|P_{3}\left(r\right)\cap\left(C_{3}^{*}\backslash S_{3}\left(r\right)\right)\right|\leq4,
\]
 which, when combined with the previous inequality, gives 
\begin{eqnarray*}
\left|P_{3}\left(r\right)\right| & \leq & 6+4\\
 & = & 10.
\end{eqnarray*}
 This inequality holds for any $P_{3}\left(r\right)$, so 
\[
\gamma\left(C_{3}^{*},r\right)\leq10\qquad\text{for }r\in\left(\frac{3}{5},\frac{2}{3}\right]\text{.}
\]
 From Proposition 5.2 below, there is a $10$-point packing set for
$rC_{3}^{*}$ contained in $C_{3}^{*}$. So this upper bound is the
best possible, giving 
\[
\gamma\left(C_{3}^{*},r\right)=10\qquad\text{for }r\in\left(\frac{3}{5},\frac{2}{3}\right]\text{.}
\]
\end{proof}

\subsection{Proof of Theorem 1.2 (b) (the $\boldsymbol{r\in\left(\frac{4}{7},\frac{3}{5}\right]}$
case)}

The following additional notation will be used in this section. For
each $r>0$ and $\sigma_{1},\sigma_{2},\sigma_{3}\in\left\{ -1,1\right\} $,
define the following sets $V\left(r,\left(\sigma_{1},\sigma_{2},\sigma_{3}\right)\right)$:
\begin{eqnarray*}
V\left(r,\left(\sigma_{1},\sigma_{2},\sigma_{3}\right),1\right) & := & \left\{ \begin{pmatrix}\sigma_{1}\left(2r-1\right)\\
\sigma_{2}\left(2r-1\right)\\
\sigma_{3}\left(2r-1\right)
\end{pmatrix},\begin{pmatrix}\sigma_{1}\left(2r-1\right)\\
\sigma_{2}\left(1-r\right)\\
\sigma_{3}\left(1-r\right)
\end{pmatrix},\begin{pmatrix}\sigma_{1}\frac{1}{2}r\\
\sigma_{2}\left(1-r\right)\\
\sigma_{3}\frac{1}{2}r
\end{pmatrix},\begin{pmatrix}\sigma_{1}\frac{1}{2}r\\
\sigma_{2}\frac{1}{2}r\\
\sigma_{3}\left(1-r\right)
\end{pmatrix},\frac{1}{3}\begin{pmatrix}\sigma_{1}\\
\sigma_{2}\\
\sigma_{3}
\end{pmatrix}\right\} ,\\
V\left(r,\left(\sigma_{1},\sigma_{2},\sigma_{3}\right),2\right) & := & \left\{ \begin{pmatrix}\sigma_{1}\left(2r-1\right)\\
\sigma_{2}\left(2r-1\right)\\
\sigma_{3}\left(2r-1\right)
\end{pmatrix},\begin{pmatrix}\sigma_{1}\left(1-r\right)\\
\sigma_{2}\left(2r-1\right)\\
\sigma_{3}\left(1-r\right)
\end{pmatrix},\begin{pmatrix}\sigma_{1}\left(1-r\right)\\
\sigma_{2}\frac{1}{2}r\\
\sigma_{3}\frac{1}{2}r
\end{pmatrix},\begin{pmatrix}\sigma_{1}\frac{1}{2}r\\
\sigma_{2}\frac{1}{2}r\\
\sigma_{3}\left(1-r\right)
\end{pmatrix},\frac{1}{3}\begin{pmatrix}\sigma_{1}\\
\sigma_{2}\\
\sigma_{3}
\end{pmatrix}\right\} \text{, and}\\
V\left(r,\left(\sigma_{1},\sigma_{2},\sigma_{3}\right),3\right) & := & \left\{ \begin{pmatrix}\sigma_{1}\left(2r-1\right)\\
\sigma_{2}\left(2r-1\right)\\
\sigma_{3}\left(2r-1\right)
\end{pmatrix},\begin{pmatrix}\sigma_{1}\left(1-r\right)\\
\sigma_{2}\left(1-r\right)\\
\sigma_{3}\left(2r-1\right)
\end{pmatrix},\begin{pmatrix}\sigma_{1}\left(1-r\right)\\
\sigma_{2}\frac{1}{2}r\\
\sigma_{3}\frac{1}{2}r
\end{pmatrix},\begin{pmatrix}\sigma_{1}\frac{1}{2}r\\
\sigma_{2}\left(1-r\right)\\
\sigma_{3}\frac{1}{2}r
\end{pmatrix},\frac{1}{3}\begin{pmatrix}\sigma_{1}\\
\sigma_{2}\\
\sigma_{3}
\end{pmatrix}\right\} .
\end{eqnarray*}
 They have the property that 
\[
\bigcup_{i=1}^{3}\conv\left(V\left(r,\left(\sigma_{1},\sigma_{2},\sigma_{3}\right),i\right)\right)=\conv\left(V\left(r,\left(\sigma_{1},\sigma_{2},\sigma_{3}\right)\right)\right),
\]
 and the numbering of these subsets is so that the set $V\left(r,\left(\sigma_{1},\sigma_{2},\sigma_{3}\right),i\right)$
contains the point in the set 
\[
\left\{ \begin{pmatrix}\sigma_{1}\left(1-r\right)\\
\sigma_{2}\left(1-r\right)\\
\sigma_{3}\left(2r-1\right)
\end{pmatrix},\begin{pmatrix}\sigma_{1}\left(1-r\right)\\
\sigma_{2}\left(2r-1\right)\\
\sigma_{3}\left(1-r\right)
\end{pmatrix},\begin{pmatrix}\sigma_{1}\left(2r-1\right)\\
\sigma_{2}\left(1-r\right)\\
\sigma_{3}\left(1-r\right)
\end{pmatrix}\right\} \subsetneq V\left(r,\left(\sigma_{1},\sigma_{2},\sigma_{3}\right)\right)
\]
 that is furthest away from the vertex $\sigma_{i}\mathbf{e}_{i}$.
\begin{lem}
Let $r\in\left(\frac{4}{7},\frac{3}{5}\right]$ and $\mathbf{p}\in P_{3}\left(r\right)$.
If $\mathbf{x}\in\conv\left(V\left(r,\left(\sigma_{1},\sigma_{2},\sigma_{3}\right)\right)\right)$
then there is a blocked set $\conv\left(V\left(r,\left(\sigma_{1}',\sigma_{2}',\sigma_{3}'\right)\right)\right)$
of $P_{3}\left(r\right)$, with $\left(\sigma_{1},\sigma_{2},\sigma_{3}\right)$
and $\left(\sigma_{1}',\sigma_{2}',\sigma_{3}'\right)$ differing
by exactly one coordinate.
\end{lem}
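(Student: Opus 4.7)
The plan is to reduce the statement to a concrete finite check using the signed-permutation symmetry of $C_3^*$, which permutes the family of sets $V(r,\sigma)$ and their subdivisions $V(r,\sigma,i)$. Without loss of generality we may assume $\sigma = (1,1,1)$. By construction, $\conv(V(r,(1,1,1))) = \bigcup_{i=1}^{3} \conv(V(r,(1,1,1),i))$, so $\mathbf{p}$ lies in at least one of the three pieces; after a further permutation of coordinates we may take $\mathbf{p} \in \conv(V(r,(1,1,1),1))$. The candidate blocked set is then $\conv(V(r,(-1,1,1)))$, namely the one obtained by flipping the sign whose index matches the subdivision piece containing $\mathbf{p}$.

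To prove this set is blocked, I will establish that $\|\mathbf{p}-\mathbf{y}\|_1 < 2r$ for every $\mathbf{y} \in \conv(V(r,(-1,1,1)))$. Since $(\mathbf{p},\mathbf{y}) \mapsto \|\mathbf{p}-\mathbf{y}\|_1$ is convex in each argument, its supremum over a product of convex hulls is attained at a pair of vertices, so it suffices to verify the strict inequality on the $5 \times 4 = 20$ vertex-to-vertex pairs of $V(r,(1,1,1),1)$ and $V(r,(-1,1,1))$. The residual $2 \leftrightarrow 3$ coordinate swap preserves both sets, cutting the bookkeeping roughly in half.

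Each such pairwise calculation proceeds exactly as in Lemma 4.3. For $r \in (4/7, 3/5]$ one has the ordering $0 < 2r-1 < r/2 < 1/3 < 1-r$, which resolves the signs inside every absolute value; each $\|\mathbf{p}-\mathbf{y}\|_1$ collapses to an affine function of $r$, and the bound $< 2r$ reduces to a linear inequality $r > c$ for an explicit rational $c$. The main obstacle is the bookkeeping of the twenty cases. The sharp case is transparent: taking $\mathbf{p} = (r/2, r/2, 1-r)$ and $\mathbf{y} = (-(1-r), 1-r, 2r-1)$ gives $\|\mathbf{p}-\mathbf{y}\|_1 = (1-r/2) + (1-3r/2) + (2-3r) = 4-5r$, and $4-5r < 2r$ is precisely $r > 4/7$, matching the hypothesis and showing the threshold is tight for this blocking argument.
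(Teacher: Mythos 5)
Your proposal is correct and follows essentially the same route as the paper: reduce by the signed-permutation symmetry to $\sigma=(1,1,1)$ and the piece $V\left(r,\left(1,1,1\right),1\right)$, take the candidate blocked set $\conv\left(V\left(r,\left(-1,1,1\right)\right)\right)$ obtained by flipping the sign matching the subdivision index, and verify $\left|\left|\mathbf{x}-\mathbf{y}\right|\right|_{1}<2r$ on the $5\times4$ vertex pairs, the maximum over the convex hulls being attained there. Your identification of the binding pair, with value $4-5r$ and threshold $r>\frac{4}{7}$, matches the tight case in the paper's case analysis, so the remaining routine checks go through exactly as you describe.
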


\begin{proof}
Without loss of generality, assume that $\sigma_{1}=\sigma_{2}=\sigma_{3}=1$,
then $\mathbf{p}$ is in one of the subsets $\conv\left(V\left(r,\left(1,1,1\right),i\right)\right)$
for $i\in\left\{ 1,2,3\right\} $. Assume that $\mathbf{p}\in\conv\left(V\left(r,\left(1,1,1\right),1\right)\right)$
and write $\mathbf{p}=\sum_{i=1}^{3}p_{i}\mathbf{e}_{i}$. We will
show that $\left|\left|\mathbf{x}-\mathbf{y}\right|\right|_{1}<2r$
for any $\mathbf{x}\in V\left(r,\left(1,1,1\right),1\right)$ and
$\mathbf{y}\in V\left(r,\left(-1,1,1\right)\right)$, and then by
the convexity of $\conv\left(V\left(r,\left(1,1,1\right),1\right)\right)$
and $\conv\left(V\left(r,\left(-1,1,1\right)\right)\right)$, it follows
that $\left|\left|\mathbf{p}-\mathbf{y}\right|\right|_{1}<2r$ for
any $\mathbf{y}\in\conv\left(V\left(r,\left(-1,1,1\right)\right)\right)$,
which shows that $\conv\left(V\left(r,\left(-1,1,1\right)\right)\right)$
is a blocked set of $P_{3}\left(r\right)$. This approach is similar
to the proof of the previous lemma, but the same approach cannot be
used here as the second calculation in the proof of Lemma 4.3 ends
with $6-8r<2r$, which is not true for $r\leq\frac{3}{5}$. There
are $20$ different combinations of points but not all of them need
to be explicitly checked. To keep track of the cases, we use the following
grid:
\noindent \begin{center}
\begin{tabular}{|c|c||c|c|c|c|}
\hline 
\multicolumn{1}{|c}{} &  & \multicolumn{4}{c|}{Elements of $V\left(r,\left(-1,1,1\right)\right)$}\tabularnewline
\cline{3-6} \cline{4-6} \cline{5-6} \cline{6-6} 
\multicolumn{1}{|c}{} &  & ${\scriptstyle \begin{pmatrix}{\scriptscriptstyle -\left(2r-1\right)}\\
{\scriptstyle 2r-1}\\
{\scriptstyle 2r-1}
\end{pmatrix}}$ & ${\scriptstyle \begin{pmatrix}{\scriptstyle -\left(1-r\right)}\\
{\scriptstyle 1-r}\\
{\scriptstyle 2r-1}
\end{pmatrix}}$ & ${\scriptstyle \begin{pmatrix}{\scriptstyle -\left(1-r\right)}\\
{\scriptstyle 2r-1}\\
{\scriptstyle 1-r}
\end{pmatrix}}$ & ${\scriptstyle \begin{pmatrix}{\scriptstyle -\left(2r-1\right)}\\
{\scriptstyle 1-r}\\
{\scriptstyle 1-r}
\end{pmatrix}}$\tabularnewline
\hline 
\hline 
\multicolumn{1}{|c|}{} & ${\scriptstyle \begin{pmatrix}{\scriptstyle 2r-1}\\
{\scriptstyle 2r-1}\\
{\scriptstyle 2r-1}
\end{pmatrix}}$ & Case 1 & Case 2 & Case 3 & Case 4\tabularnewline
\cline{2-6} \cline{3-6} \cline{4-6} \cline{5-6} \cline{6-6} 
 & ${\scriptstyle \begin{pmatrix}{\scriptstyle 2r-1}\\
{\scriptstyle 1-r}\\
{\scriptstyle 1-r}
\end{pmatrix}}$ & Case 5 & Case 6 & Case 7 & Case 8\tabularnewline
\cline{2-6} \cline{3-6} \cline{4-6} \cline{5-6} \cline{6-6} 
Elements of $V\left(r,\left(1,1,1\right),1\right)$ & ${\scriptstyle \begin{pmatrix}{\scriptstyle \frac{1}{2}r}\\
{\scriptstyle 1-r}\\
{\scriptstyle \frac{1}{2}r}
\end{pmatrix}}$ & Case 9 & Case 10 & Case 11 & Case 12\tabularnewline
\cline{2-6} \cline{3-6} \cline{4-6} \cline{5-6} \cline{6-6} 
 & ${\scriptstyle \begin{pmatrix}{\scriptstyle \frac{1}{2}r}\\
{\scriptstyle \frac{1}{2}r}\\
{\scriptstyle 1-r}
\end{pmatrix}}$ & Case 13 & Case 14 & Case 15 & Case 16\tabularnewline
\cline{2-6} \cline{3-6} \cline{4-6} \cline{5-6} \cline{6-6} 
 & ${\scriptstyle \begin{pmatrix}{\scriptstyle \frac{1}{3}}\\
{\scriptstyle \frac{1}{3}}\\
{\scriptstyle \frac{1}{3}}
\end{pmatrix}}$ & Case 17 & Case 18 & Case 19 & Case 20\tabularnewline
\hline 
\end{tabular}
\par\end{center}

\noindent For each $k\in\left\{ 1,\ldots,20\right\} $, case $k$
corresponds to the calculation of $\left|\left|\mathbf{x}-\mathbf{y}\right|\right|_{1}$,
where $\mathbf{x}$ is the element of $V\left(r,\left(1,1,1\right),1\right)$
in the same row as $k$ and $\mathbf{y}$ is the element of $V\left(r,\left(-1,1,1\right)\right)$
in the same column as $k$. For example, $\left|\left|\left(2r-1,2r-1,2r-1\right)^{\mathsf{T}}-\left(-\left(2r-1\right),2r-1,2r-1\right)^{\mathsf{T}}\right|\right|_{1}$
will be calculated in case 1 below. Cases that are similar to previous
cases will be pointed out as they arise.
\end{proof}
\begin{enumerate}
\item Since $2r-1,1-r<\frac{1}{2}<r$, it immediately follows that 
\[
\left|\left|\begin{pmatrix}2r-1\\
2r-1\\
2r-1
\end{pmatrix}-\begin{pmatrix}-\left(2r-1\right)\\
2r-1\\
2r-1
\end{pmatrix}\right|\right|_{1}<2r.
\]
\item 
\begin{eqnarray*}
\left|\left|\begin{pmatrix}2r-1\\
2r-1\\
2r-1
\end{pmatrix}-\begin{pmatrix}-\left(1-r\right)\\
1-r\\
2r-1
\end{pmatrix}\right|\right|_{1} & = & \left|\left(2r-1\right)+\left(1-r\right)\right|+\left|\left(2r-1\right)-\left(1-r\right)\right|\\
 & = & \left(\left(2r-1\right)+\left(1-r\right)\right)+\left(\left(1-r\right)-\left(2r-1\right)\right)\\
 & = & 2-2r\\
 & < & 2r.
\end{eqnarray*}
\item By symmetry, this case is similar to case 2.
\item 
\begin{eqnarray*}
\left|\left|\begin{pmatrix}2r-1\\
2r-1\\
2r-1
\end{pmatrix}-\begin{pmatrix}-\left(2r-1\right)\\
1-r\\
1-r
\end{pmatrix}\right|\right|_{1} & = & \left|\left(2r-1\right)+\left(2r-1\right)\right|+\left|\left(2r-1\right)-\left(1-r\right)\right|+\left|\left(2r-1\right)-\left(1-r\right)\right|\\
 & = & \left(\left(2r-1\right)+\left(2r-1\right)\right)+\left(\left(1-r\right)-\left(2r-1\right)\right)+\left(\left(1-r\right)-\left(2r-1\right)\right)\\
 & = & 2-2r\\
 & < & 2r.
\end{eqnarray*}
\item 
\begin{eqnarray*}
\left|\left|\begin{pmatrix}2r-1\\
1-r\\
1-r
\end{pmatrix}-\begin{pmatrix}-\left(1-r\right)\\
2r-1\\
2r-1
\end{pmatrix}\right|\right|_{1} & = & \left|\left(2r-1\right)+\left(1-r\right)\right|+\left|\left(2r-1\right)-\left(1-r\right)\right|+\left|\left(2r-1\right)-\left(1-r\right)\right|\\
 & = & \left(\left(2r-1\right)+\left(1-r\right)\right)+\left(\left(1-r\right)-\left(2r-1\right)\right)+\left(\left(1-r\right)-\left(2r-1\right)\right)\\
 & = & 4-5r\\
 & < & 2r.
\end{eqnarray*}
\item 
\begin{eqnarray*}
\left|\left|\begin{pmatrix}2r-1\\
1-r\\
1-r
\end{pmatrix}-\begin{pmatrix}-\left(1-r\right)\\
1-r\\
2r-1
\end{pmatrix}\right|\right|_{1} & = & \left|\left(2r-1\right)+\left(1-r\right)\right|+\left|\left(2r-1\right)-\left(1-r\right)\right|\\
 & = & \left(\left(2r-1\right)+\left(1-r\right)\right)+\left(\left(1-r\right)-\left(2r-1\right)\right)\\
 & = & 2-2r\\
 & < & 2r.
\end{eqnarray*}
\item This case follows from case 6 due to symmetry.
\item This case follows from the same argument used in case 1, that $2r-1,1-r<\frac{1}{2}<r$.
\item 
\begin{eqnarray*}
\left|\left|\begin{pmatrix}\frac{1}{2}r\\
1-r\\
\frac{1}{2}r
\end{pmatrix}-\begin{pmatrix}-\left(2r-1\right)\\
2r-1\\
2r-1
\end{pmatrix}\right|\right|_{1} & = & \left|\frac{1}{2}r+\left(2r-1\right)\right|+\left|\left(1-r\right)-\left(2r-1\right)\right|+\left|\frac{1}{2}r-\left(2r-1\right)\right|\\
 & = & \left(\frac{1}{2}r+\left(2r-1\right)\right)+\left(\left(1-r\right)-\left(2r-1\right)\right)+\left(\frac{1}{2}r-\left(2r-1\right)\right)\\
 & = & 2-2r\\
 & < & 2r.
\end{eqnarray*}
\item 
\begin{eqnarray*}
\left|\left|\begin{pmatrix}\frac{1}{2}r\\
1-r\\
\frac{1}{2}r
\end{pmatrix}-\begin{pmatrix}-\left(1-r\right)\\
1-r\\
2r-1
\end{pmatrix}\right|\right|_{1} & = & \left|\frac{1}{2}r+\left(2r-1\right)\right|+\left|\frac{1}{2}r-\left(2r-1\right)\right|\\
 & = & \left(\frac{1}{2}r+\left(2r-1\right)\right)+\left(\frac{1}{2}r-\left(2r-1\right)\right)\\
 & = & r\\
 & < & 2r.
\end{eqnarray*}
\item 
\begin{eqnarray*}
\left|\left|\begin{pmatrix}\frac{1}{2}r\\
1-r\\
\frac{1}{2}r
\end{pmatrix}-\begin{pmatrix}-\left(2r-1\right)\\
2r-1\\
1-r
\end{pmatrix}\right|\right|_{1} & = & \left|\frac{1}{2}r+\left(2r-1\right)\right|+\left|\left(1-r\right)-\left(2r-1\right)\right|+\left|\frac{1}{2}r-\left(1-r\right)\right|\\
 & = & \left(\frac{1}{2}r+\left(2r-1\right)\right)+\left(\left(1-r\right)-\left(2r-1\right)\right)+\left(\frac{1}{2}r-\left(1-r\right)\right)\\
 & = & r\\
 & < & 2r.
\end{eqnarray*}
\item 
\begin{eqnarray*}
\left|\left|\begin{pmatrix}\frac{1}{2}r\\
1-r\\
\frac{1}{2}r
\end{pmatrix}-\begin{pmatrix}-\left(2r-1\right)\\
1-r\\
1-r
\end{pmatrix}\right|\right|_{1} & = & \left|\frac{1}{2}r+\left(2r-1\right)\right|+\left|\frac{1}{2}r-\left(1-r\right)\right|\\
 & = & \left(\frac{1}{2}r+\left(2r-1\right)\right)+\left(\frac{1}{2}r-\left(1-r\right)\right)\\
 & = & 4r-2\\
 & < & 2r.
\end{eqnarray*}
\item By symmetry, this case is similar to case 9.
\item By symmetry, this case is similar to case 11.
\item By symmetry, this case is similar to case 10.
\item By symmetry, this case is similar to case 12.
\item 
\begin{eqnarray*}
\left|\left|\begin{pmatrix}\frac{1}{3}\\
\frac{1}{3}\\
\frac{1}{3}
\end{pmatrix}-\begin{pmatrix}-\left(2r-1\right)\\
2r-1\\
2r-1
\end{pmatrix}\right|\right|_{1} & = & \left|\frac{1}{3}+\left(2r-1\right)\right|+\left|\frac{1}{3}-\left(2r-1\right)\right|+\left|\frac{1}{3}-\left(2r-1\right)\right|\\
 & = & \left(\frac{1}{3}+\left(2r-1\right)\right)+\left(\frac{1}{3}-\left(2r-1\right)\right)+\left(\frac{1}{3}-\left(2r-1\right)\right)\\
 & = & \frac{4}{3}-2r\\
 & < & 2r.
\end{eqnarray*}
\item 
\begin{eqnarray*}
\left|\left|\begin{pmatrix}\frac{1}{3}\\
\frac{1}{3}\\
\frac{1}{3}
\end{pmatrix}-\begin{pmatrix}-\left(1-r\right)\\
1-r\\
2r-1
\end{pmatrix}\right|\right|_{1} & = & \left|\frac{1}{3}+\left(2r-1\right)\right|+\left|\frac{1}{3}-\left(1-r\right)\right|+\left|\frac{1}{3}-\left(2r-1\right)\right|\\
 & = & \left(\frac{1}{3}+\left(2r-1\right)\right)+\left(\left(1-r\right)-\frac{1}{3}\right)+\left(\frac{1}{3}-\left(2r-1\right)\right)\\
 & = & \frac{4}{3}-r\\
 & < & 2r.
\end{eqnarray*}
\item By symmetry, this case is similar to case 18.
\item 
\begin{eqnarray*}
\left|\left|\begin{pmatrix}\frac{1}{3}\\
\frac{1}{3}\\
\frac{1}{3}
\end{pmatrix}-\begin{pmatrix}-\left(2r-1\right)\\
1-r\\
1-r
\end{pmatrix}\right|\right|_{1} & = & \left|\frac{1}{3}+\left(2r-1\right)\right|+\left|\frac{1}{3}-\left(1-r\right)\right|+\left|\frac{1}{3}-\left(1-r\right)\right|\\
 & = & \left(\frac{1}{3}+\left(2r-1\right)\right)+\left(\left(1-r\right)-\frac{1}{3}\right)+\left(\left(1-r\right)-\frac{1}{3}\right)\\
 & = & \frac{2}{3}\\
 & < & 2r.
\end{eqnarray*}
\end{enumerate}
\begin{proof}
Hence $\conv\left(V\left(r,\left(-1,1,1\right)\right)\right)$ is
a blocked set of $P_{3}\left(r\right)$. By symmetry, if $\mathbf{p}\in\conv\left(V\left(r,\left(1,1,1\right),2\right)\right)$
or $\mathbf{p}\in\conv\left(V\left(r,\left(1,1,1\right),3\right)\right)$
then calculations similar to the above can be performed with $\mathbf{y}\in\conv\left(V\left(r,\left(1,-1,1\right)\right)\right)$
or $\mathbf{y}\in\conv\left(V\left(r,\left(1,1,-1\right)\right)\right)$
respectively.
\end{proof}
Using the above lemma and the same argument as after Lemma 4.3 in
the last subsection, we have 
\[
\left|P_{3}\left(r\right)\cap\left(C_{3}^{*}\backslash S_{3}\left(r\right)\right)\right|\leq7.
\]
 However, just like in the previous subsection it is possible to lower
the $7$ to a $6$ with the following argument.
\begin{lem}
Let $r\in\left(\frac{4}{7},\frac{3}{5}\right]$. Then for any $P_{3}\left(r\right)$,
there exist at least two blocked sets of $P_{3}\left(r\right)$.
\end{lem}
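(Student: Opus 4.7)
The approach mirrors that of Lemma 4.4 but uses Lemma 4.5 in place of Lemma 4.3. The plan is a short case analysis on whether $P_{3}(r)\cap(C_{3}^{*}\backslash S_{3}(r))$ is empty. If it is empty, then every one of the eight sets $\conv(V(r,(\sigma_{1},\sigma_{2},\sigma_{3})))$ vacuously contains no point of $P_{3}(r)$, so all eight are blocked and in particular at least two are.

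Otherwise I would pick $\mathbf{p}\in P_{3}(r)\cap\conv(V(r,\sigma))$ for some sign vector $\sigma=(\sigma_{1},\sigma_{2},\sigma_{3})\in\{-1,1\}^{3}$. Lemma 4.5 immediately yields one blocked set $\conv(V(r,\sigma'))$ with Hamming distance $d_{H}(\sigma,\sigma')=1$; the remaining task is to produce a second blocked set distinct from this one. Next I would examine the antipodal region $\conv(V(r,-\sigma))$. If it contains no point of $P_{3}(r)$, then it is itself blocked, and since $d_{H}(\sigma,-\sigma)=3$ while $d_{H}(\sigma,\sigma')=1$, the sign vectors $-\sigma$ and $\sigma'$ cannot coincide, so this is a second blocked set genuinely different from the first. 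If instead there is some $\mathbf{q}\in P_{3}(r)\cap\conv(V(r,-\sigma))$, a second application of Lemma 4.5 produces a blocked set $\conv(V(r,\sigma''))$ with $d_{H}(-\sigma,\sigma'')=1$, which forces $d_{H}(\sigma,\sigma'')=2$ (since $\sigma''$ disagrees with $\sigma$ in exactly the two coordinates where it agrees with $-\sigma$). Because $d_{H}(\sigma,\sigma')=1\neq 2=d_{H}(\sigma,\sigma'')$, we have $\sigma'\neq\sigma''$, and again two distinct blocked sets are obtained.

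The only real subtlety is ensuring that the two blocked sets produced are actually distinct rather than the same set being counted twice; I expect this bookkeeping to be the main (if minor) obstacle, but it is handled cleanly by the Hamming distance argument on the sign vectors as above. Once this lemma is established, combining it with Lemma 3.3 and the fact noted at the end of Subsection 4.1 that for $r>\frac{1}{2}$ each $\conv(V(r,(\sigma_{1},\sigma_{2},\sigma_{3})))$ contains at most one point of $P_{3}(r)$ will give $|P_{3}(r)|\leq 6+(8-2)=12$, which is the upper bound required for Theorem 1.2 (b).
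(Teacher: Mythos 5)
Your proof is correct and follows essentially the same route as the paper: both rest on Lemma 4.5 together with an examination of the antipodal region $\conv\left(V\left(r,\left(-\sigma_{1},-\sigma_{2},-\sigma_{3}\right)\right)\right)$, with distinctness of the two blocked sets guaranteed by counting in how many coordinates the sign vectors differ. The only cosmetic difference is that the paper starts from an already-known blocked set (whose existence it records just before the lemma) and looks at its antipode, whereas you start from a point of $P_{3}\left(r\right)$ outside $S_{3}\left(r\right)$ and handle the empty case separately; the underlying argument is the same.
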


\begin{proof}
Let $\conv\left(V\left(r,\left(\sigma_{1},\sigma_{2},\sigma_{3}\right)\right)\right)$,
$\sigma_{1},\sigma_{2},\sigma_{3}\in\left\{ -1,1\right\} $ be a blocked
set of $P_{3}\left(r\right)$ and consider the set $\conv\left(V\left(r,\left(-\sigma_{1},-\sigma_{2},-\sigma_{3}\right)\right)\right)$.
If $\conv\left(V\left(r,\left(-\sigma_{1},-\sigma_{2},-\sigma_{3}\right)\right)\right)$
is a blocked set of $P_{3}\left(r\right)$ then we are done. Otherwise,
by Lemma 4.3 there must be a blocked set $\conv\left(V\left(r,\left(\sigma_{1}',\sigma_{2}',\sigma_{3}'\right)\right)\right)$
of $P_{3}\left(r\right)$ such that $\left(-\sigma_{1},-\sigma_{2},-\sigma_{3}\right)$
and $\left(\sigma_{1}',\sigma_{2}',\sigma_{3}'\right)$ differ by
exactly one coordinate. Then $\left(\sigma_{1}',\sigma_{2}',\sigma_{3}'\right)\neq\left(\sigma_{1},\sigma_{2},\sigma_{3}\right)$,
which means that $\conv\left(V\left(r,\left(\sigma_{1},\sigma_{2},\sigma_{3}\right)\right)\right)$
and $\conv\left(V\left(r,\left(\sigma_{1}',\sigma_{2}',\sigma_{3}'\right)\right)\right)$
are two distinct blocked sets of $P_{3}\left(r\right)$.
\end{proof}
The proof of Theorem 1.2 (b) is virtually identical to the proof of
Theorem 1.2 (a).
\begin{proof}[\emph{Proof of Theorem 1.2 (b)}]
 Let $r\in\left(\frac{4}{7},\frac{3}{5}\right]$. As in the proof
of Theorem 1.2 (c), we split up $P_{3}\left(r\right)$ into $P_{3}\left(r\right)\cap S_{3}\left(r\right)$
and $P_{3}\left(r\right)\cap\left(S_{3}^{*}\backslash S_{3}\left(r\right)\right)$,
then 
\begin{eqnarray*}
\left|P_{3}\left(r\right)\right| & \leq & \left|P_{3}\left(r\right)\cap S_{3}\left(r\right)\right|+\left|P_{3}\left(r\right)\cap\left(S_{3}^{*}\backslash S_{3}\left(r\right)\right)\right|\\
 & \leq & 6+\left|P_{3}\left(r\right)\cap\left(S_{3}^{*}\backslash S_{3}\left(r\right)\right)\right|.
\end{eqnarray*}
 By Lemma 4.6, 
\[
\left|P_{3}\left(r\right)\cap\left(S_{3}^{*}\backslash S_{3}\left(r\right)\right)\right|\leq6,
\]
 which, when combined with the previous inequality, gives 
\begin{eqnarray*}
\left|P_{3}\left(r\right)\right| & \leq & 6+6\\
 & = & 12.
\end{eqnarray*}
 This inequality holds for any $P_{3}\left(r\right)$, so 
\[
\gamma\left(C_{3}^{*},r\right)\leq12\qquad\text{for }r\in\left(\frac{4}{7},\frac{3}{5}\right]\text{.}
\]
 From Proposition 5.3 below, there is a $12$-point packing set for
$rC_{3}^{*}$ contained in $C_{3}^{*}$. So this upper bound is the
best possible, giving 
\[
\gamma\left(C_{3}^{*},r\right)=12\qquad\text{for }r\in\left(\frac{4}{7},\frac{3}{5}\right]\text{.}
\]
\end{proof}

\subsection{Proof of Theorem 1.2 (c) (the $\boldsymbol{r\in\left(\frac{1}{2},\frac{4}{7}\right]}$
case)}

For $r\in\left(\frac{1}{2},\frac{4}{7}\right]$, we will use an approach
that has similarities to Larman and Zong \cite{LarmanZong1999} and
B{\"o}r{\"o}czky Jr. and Wintsche \cite{BoeroeczkyWintsche2000} in that the maximum
distance between any two points in $\conv\left(V\left(r,\left(\sigma_{1},\sigma_{2},\sigma_{3}\right)\right)\right)$
is less than $2r$. Then each $\conv\left(V\left(r,\left(\sigma_{1},\sigma_{2},\sigma_{3}\right)\right)\right)$
can contain at most one point of $P_{3}\left(r\right)$, and since
$S_{3}^{*}\backslash S_{3}\left(r\right)=\bigcup_{\sigma_{1},\sigma_{2},\sigma_{3}\in\left\{ -1,1\right\} }\conv\left(V\left(r,\left(\sigma_{1},\sigma_{2},\sigma_{3}\right)\right)\right)$,
the number of points of $P_{3}\left(r\right)$ in $S_{3}^{*}\backslash S_{3}\left(r\right)$
is bounded above by $8$.
\begin{lem}
Let $r\in\left(\frac{1}{2},\frac{4}{7}\right]$ and $\sigma_{1},\sigma_{2},\sigma_{3}\in\left\{ -1,1\right\} $.
For any two points $\mathbf{x},\mathbf{y}\in\conv\left(V\left(r,\left(\sigma_{1},\sigma_{2},\sigma_{3}\right)\right)\right)$,
$\left|\left|\mathbf{x}-\mathbf{y}\right|\right|_{1}<2r$.
\end{lem}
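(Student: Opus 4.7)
The plan is to reduce the statement about all points in $\conv(V(r,(\sigma_1,\sigma_2,\sigma_3)))$ to a statement about its four vertices. Since for each fixed $\mathbf{y}$ the map $\mathbf{x}\mapsto\|\mathbf{x}-\mathbf{y}\|_1$ is convex, and a convex function on a polytope attains its maximum at a vertex, the $l_1$-diameter of a convex polytope equals the maximum of $\|\mathbf{v}-\mathbf{w}\|_1$ over pairs of vertices $\mathbf{v},\mathbf{w}$. It therefore suffices to check that $\|\mathbf{v}-\mathbf{w}\|_1<2r$ for every pair of distinct vertices in $V(r,(\sigma_1,\sigma_2,\sigma_3))$.

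Next I would exploit symmetry. The coordinate reflection $(x_1,x_2,x_3)\mapsto(\sigma_1 x_1,\sigma_2 x_2,\sigma_3 x_3)$ is an $l_1$-isometry of $\mathbb{R}^3$ that carries $V(r,(1,1,1))$ onto $V(r,(\sigma_1,\sigma_2,\sigma_3))$ bijectively, so without loss of generality I may assume $\sigma_1=\sigma_2=\sigma_3=1$.

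The key observation is structural. Write the four vertices as
\begin{align*}
\mathbf{v}_0&=(2r-1,2r-1,2r-1), & \mathbf{v}_1&=(1-r,1-r,2r-1),\\
\mathbf{v}_2&=(1-r,2r-1,1-r), & \mathbf{v}_3&=(2r-1,1-r,1-r).
\end{align*}
Any two distinct vertices in this list agree in exactly one coordinate and differ in each of the remaining two by $|(1-r)-(2r-1)|=|2-3r|$. Since $r\leq\tfrac{4}{7}<\tfrac{2}{3}$, this equals $2-3r$, so every one of the six pairwise $l_1$-distances is $2(2-3r)=4-6r$. The desired strict inequality $4-6r<2r$ simplifies to $r>\tfrac{1}{2}$, which is exactly the hypothesis.

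There is no serious obstacle here: the reduction to vertices is an elementary convexity fact, the sign symmetry is immediate, and the six distance computations collapse to a single calculation because all pairs of vertices share the same ``two-differing-coordinate'' structure. Note in passing that the restriction $r\leq\tfrac{4}{7}$ is not required for this lemma itself (the computation gives $4-6r<2r$ throughout $(\tfrac{1}{2},\tfrac{2}{3})$); the hypothesis merely reflects the range in which the lemma will be applied.
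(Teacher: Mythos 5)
Your proof is correct and follows essentially the same route as the paper: reduce to the four vertices via convexity of the $l_1$-norm and then check pairwise vertex distances, all of which equal $4-6r<2r$. The only difference is cosmetic---you unify the paper's two cases (both points permutations of $(1-r,1-r,2r-1)^{\mathsf{T}}$ versus one point equal to $(2r-1,2r-1,2r-1)^{\mathsf{T}}$) into a single observation, and your remark that the bound actually holds for all $r\in\left(\frac{1}{2},\frac{2}{3}\right]$ is accurate.
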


\begin{proof}
Without loss of generality, let $\sigma_{1}=\sigma_{2}=\sigma_{3}=1$,
then $\mathbf{x},\mathbf{y}\in\conv\left(V\left(r,\left(1,1,1\right)\right)\right)$.
It suffices to show that the distance between any two points in $V\left(r,\left(1,1,1\right)\right)$
is less than $2r$, then the conclusion for all points in $\conv\left(V\left(r,\left(1,1,1\right)\right)\right)$
follows by the convexity of $\conv\left(V\left(r,\left(1,1,1\right)\right)\right)$.
We also assume that the two points are distinct. Suppose that neither
point is $\left(2r-1,2r-1,2r-1\right)^{\mathsf{T}}$, where $\mathbf{v}^{\mathsf{T}}$
is the transpose of $\mathbf{v}$, then both points are permutations
of $\left(1-r,1-r,2r-1\right)^{\mathsf{T}}$, so the distance between
the two points is 
\begin{eqnarray*}
\left|\left|\mathbf{x}-\mathbf{y}\right|\right|_{1} & = & 0+\left|\left(1-r\right)-\left(2r-1\right)\right|+\left|\left(2r-1\right)-\left(1-r\right)\right|\\
 & = & 0+\left(2-3r\right)+\left(2-3r\right)\\
 & = & 4-6r\\
 & < & 2r.
\end{eqnarray*}
 If one of the points is $\left(2r-1,2r-1,2r-1\right)^{\mathsf{T}}$,
then the other point must be a permutation of $\left(1-r,1-r,2r-1\right)^{\mathsf{T}}$,
so the distance between the two points is 
\begin{eqnarray*}
\left|\left|\mathbf{x}-\mathbf{y}\right|\right|_{1} & = & \left|\left(1-r\right)-\left(2r-1\right)\right|+\left|\left(2r-1\right)-\left(1-r\right)\right|\\
 & < & 2r.
\end{eqnarray*}
\end{proof}
Below is the proof for Theorem 1.2 (c).
\begin{proof}[\emph{Proof of Theorem 1.2 (c)}]
 Let $r\in\left(\frac{1}{2},\frac{4}{7}\right]$. By Lemma 3.3, 
\[
\left|P_{3}\left(r\right)\cap S_{3}\left(r\right)\right|\leq6.
\]
 Write $P_{3}\left(r\right)$ as the union of two sets $P_{3}\left(r\right)\cap S_{3}\left(r\right)$
and $P_{3}\left(r\right)\cap\left(S_{3}^{*}\backslash S_{3}\left(r\right)\right)$,
whose cardinalities can be individually bounded above. In particular,
by Lemma 4.1 the latter set can be expressed as 
\begin{eqnarray*}
\left|P_{3}\left(r\right)\right| & \leq & \left|P_{3}\left(r\right)\cap S_{3}\left(r\right)\right|+\left|P_{3}\left(r\right)\cap\left(S_{3}^{*}\backslash S_{3}\left(r\right)\right)\right|\\
 & = & 6+\left|P_{3}\left(r\right)\cap\left(\bigcup_{\sigma_{1},\sigma_{2},\sigma_{3}\in\left\{ -1,1\right\} }\conv\left(V\left(r,\left(\sigma_{1},\sigma_{2},\sigma_{3}\right)\right)\right)\right)\right|\\
 & \leq & 6+\sum_{\sigma_{1},\sigma_{2},\sigma_{3}\in\left\{ -1,1\right\} }\left|P_{3}\left(r\right)\cap\conv\left(V\left(r,\left(\sigma_{1},\sigma_{2},\sigma_{3}\right)\right)\right)\right|.
\end{eqnarray*}
 An immediate consequence of Lemma 4.5 is that 
\[
\left|P_{3}\left(r\right)\cap\conv\left(V\left(r,\left(\sigma_{1},\sigma_{2},\sigma_{3}\right)\right)\right)\right|\leq1
\]
 for all $\sigma_{1},\sigma_{2},\sigma_{3}\in\left\{ -1,1\right\} $,
which, when combined with the previous inequality, gives 
\begin{eqnarray*}
\left|P_{3}\left(r\right)\right| & \leq & 6+8\\
 & = & 14.
\end{eqnarray*}
 This inequality holds for any $P_{3}\left(r\right)$, so 
\[
\gamma\left(C_{3}^{*},r\right)\leq14\qquad\text{for }r\in\left(\frac{1}{2},\frac{4}{7}\right]\text{.}
\]
\end{proof}
We are not able to find the exact value of $\gamma\left(C_{3}^{*},r\right)$
for such $r$, but some lower bounds are in Section 5.

\section{Constructive lower bounds including the proof of Proposition 1.3}

In contrast to the upper bounds, the lower bounds are all obtained
by explicit constructions of points in the cross-polytope. For $n=3$
and $r\in\left(\frac{1}{2},\frac{2}{3}\right]$, all of the constructions
shown here contain the six points of $V_{3}$ and the remaining points
are in the union of the eight sets $\conv\left(V\left(r,\left(\sigma_{1},\sigma_{2},\sigma_{3}\right)\right)\right)$.
There are no claims of uniqueness made here; more than one set of
points may achieve the lower bounds of Theorem 1.3.

The calculations in the proofs below can be performed by hand or using
a computer.
\begin{prop}
Let $\mathbf{q}_{n}=\left(\frac{1}{n},\ldots,\frac{1}{n}\right)^{\mathsf{T}}\in\mathbb{R}^{n}$.
Then $V_{n}\cup\left\{ \pm\mathbf{q}_{n}\right\} \subset C_{3}^{*}$
and for $r\in\left(0,1-\frac{1}{n}\right]$, 
\[
V_{n}\cup\left\{ \pm\mathbf{q}_{n}\right\} =\left\{ \begin{pmatrix}1\\
0\\
0\\
\vdots\\
0
\end{pmatrix},\begin{pmatrix}-1\\
0\\
0\\
\vdots\\
0
\end{pmatrix},\begin{pmatrix}0\\
1\\
0\\
\vdots\\
0
\end{pmatrix},\begin{pmatrix}0\\
-1\\
0\\
\vdots\\
0
\end{pmatrix},\ldots,\begin{pmatrix}0\\
0\\
\vdots\\
0\\
1
\end{pmatrix},\begin{pmatrix}0\\
0\\
\vdots\\
0\\
-1
\end{pmatrix}\right\} \cup\frac{1}{n}\left\{ \begin{pmatrix}1\\
1\\
\vdots\\
1\\
1
\end{pmatrix},-\begin{pmatrix}1\\
1\\
\vdots\\
1\\
1
\end{pmatrix}\right\} 
\]
 is a packing set of $rC_{n}^{*}$.
\end{prop}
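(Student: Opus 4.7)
The plan is to verify directly that (i) every element of $V_n \cup \{\pm \mathbf{q}_n\}$ lies in $C_n^*$, and (ii) the pairwise $l_1$-distance between any two distinct elements is at least $2r$ whenever $r \in (0, 1 - \frac{1}{n}]$. The containment is immediate: each $\pm \mathbf{e}_i$ is a vertex of $C_n^*$, while $\|\pm\mathbf{q}_n\|_1 = n \cdot \frac{1}{n} = 1$, so $\pm\mathbf{q}_n$ lie on a facet of $C_n^*$.

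For the distance condition, I would split the pairs into four types. First, for two distinct vertices $\pm\mathbf{e}_i$ and $\pm\mathbf{e}_j$, a direct computation gives $\|\pm\mathbf{e}_i \mp \mathbf{e}_j\|_1 = 2$. Second, $\|\mathbf{q}_n - (-\mathbf{q}_n)\|_1 = \|2\mathbf{q}_n\|_1 = 2$. Third, for $\mathbf{q}_n$ and $-\mathbf{e}_i$ (and symmetrically $-\mathbf{q}_n$ and $\mathbf{e}_i$),
\[
\|\mathbf{q}_n - (-\mathbf{e}_i)\|_1 = \left|\tfrac{1}{n} + 1\right| + (n-1)\cdot\tfrac{1}{n} = \left(1 + \tfrac{1}{n}\right) + \left(1 - \tfrac{1}{n}\right) = 2.
\]
Finally, for $\mathbf{q}_n$ and $\mathbf{e}_i$ (and symmetrically $-\mathbf{q}_n$ and $-\mathbf{e}_i$),
\[
\|\mathbf{q}_n - \mathbf{e}_i\|_1 = \left|\tfrac{1}{n} - 1\right| + (n-1)\cdot\tfrac{1}{n} = 2\left(1 - \tfrac{1}{n}\right).
\]

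Collecting these, the minimum pairwise $l_1$-distance in $V_n \cup \{\pm\mathbf{q}_n\}$ is $2(1 - \frac{1}{n})$, attained exactly on pairs of the form $(\mathbf{q}_n, \mathbf{e}_i)$ and $(-\mathbf{q}_n, -\mathbf{e}_i)$. Thus for any $r \leq 1 - \frac{1}{n}$, every pairwise distance is $\geq 2r$, so $V_n \cup \{\pm\mathbf{q}_n\}$ is a packing set of $rC_n^*$ contained in $C_n^*$. There is no real obstacle here; the proof is essentially bookkeeping of the four distance cases above, and the choice of $\mathbf{q}_n$ as the centroid of a facet is precisely what makes the critical distance equal to $2(1 - \frac{1}{n})$, matching the threshold in the statement.
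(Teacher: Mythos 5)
Your proof is correct and follows essentially the same route as the paper's: a direct verification that all points lie in $C_{n}^{*}$ and that the minimum pairwise $l_{1}$-distance is $2\left(1-\frac{1}{n}\right)$, attained by the pairs $\left(\mathbf{q}_{n},\mathbf{e}_{i}\right)$ and $\left(-\mathbf{q}_{n},-\mathbf{e}_{i}\right)$. The only difference is that you spell out the four distance cases explicitly, whereas the paper simply asserts the bounds.
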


\begin{proof}
Any points $\mathbf{x},\mathbf{y}\in V_{n}\cup\left\{ \pm\mathbf{q}_{n}\right\} $,
$\mathbf{x}\neq\mathbf{y}$, have the property that $\left|\left|\mathbf{x}\right|\right|_{1}\leq1$
and $\left|\left|\mathbf{x}-\mathbf{y}\right|\right|_{1}\leq2\left(1-\frac{1}{n}\right)$,
so $V_{n}\cup\left\{ \pm\mathbf{q}_{n}\right\} \subset C_{3}^{*}$
is a packing set of $rC_{n}^{*}$ for $r\leq1-\frac{1}{n}$.
\end{proof}
\begin{prop}
Let 
\[
Q_{10}=\frac{1}{3}\left\{ \begin{pmatrix}1\\
1\\
1
\end{pmatrix},\begin{pmatrix}-1\\
-1\\
1
\end{pmatrix},\begin{pmatrix}-1\\
1\\
-1
\end{pmatrix},\begin{pmatrix}1\\
-1\\
-1
\end{pmatrix}\right\} .
\]
 Then $V_{3}\cup Q_{10}\subset C_{3}^{*}$ and for $r\in\left(0,\frac{2}{3}\right]$,
\[
V_{3}\cup Q_{10}=\left\{ \begin{pmatrix}1\\
0\\
0
\end{pmatrix},\begin{pmatrix}-1\\
0\\
0
\end{pmatrix},\begin{pmatrix}0\\
1\\
0
\end{pmatrix},\begin{pmatrix}0\\
-1\\
0
\end{pmatrix},\begin{pmatrix}0\\
0\\
1
\end{pmatrix},\begin{pmatrix}0\\
0\\
-1
\end{pmatrix}\right\} \cup\frac{1}{3}\left\{ \begin{pmatrix}1\\
1\\
1
\end{pmatrix},\begin{pmatrix}-1\\
-1\\
1
\end{pmatrix},\begin{pmatrix}-1\\
1\\
-1
\end{pmatrix},\begin{pmatrix}1\\
-1\\
-1
\end{pmatrix}\right\} 
\]
 is a packing set of $rC_{3}^{*}$.
\end{prop}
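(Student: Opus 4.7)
The plan is to verify the two claims directly: first that $V_{3} \cup Q_{10} \subset C_{3}^{*}$, and second that the pairwise $l_{1}$-distances between distinct points in $V_{3} \cup Q_{10}$ are all at least $\frac{4}{3} = 2 \cdot \frac{2}{3}$, which suffices for any $r \in \left(0, \frac{2}{3}\right]$. The containment is immediate: every point in $V_{3}$ has $l_{1}$-norm $1$, and each point $\frac{1}{3}\left(\pm 1, \pm 1, \pm 1\right)^{\mathsf{T}}$ also has $l_{1}$-norm $\frac{1}{3} + \frac{1}{3} + \frac{1}{3} = 1$, so all ten points lie on the boundary of $C_{3}^{*}$.

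For the distance condition, the calculation splits into three symmetry classes of unordered pairs. First, any two distinct points $\mathbf{u}, \mathbf{v} \in V_{3}$ satisfy $\left|\left|\mathbf{u} - \mathbf{v}\right|\right|_{1} = 2 \geq \frac{4}{3}$. Second, for a vertex $\sigma \mathbf{e}_{j} \in V_{3}$ and a centroid $\mathbf{q} = \frac{1}{3}\left(\epsilon_{1}, \epsilon_{2}, \epsilon_{3}\right)^{\mathsf{T}} \in Q_{10}$, one computes
\[
\left|\left|\sigma \mathbf{e}_{j} - \mathbf{q}\right|\right|_{1} = \left|\sigma - \tfrac{\epsilon_{j}}{3}\right| + \sum_{i \neq j} \tfrac{1}{3} = \left|\sigma - \tfrac{\epsilon_{j}}{3}\right| + \tfrac{2}{3},
\]
and the first term equals either $\frac{2}{3}$ or $\frac{4}{3}$, giving a distance of $\frac{4}{3}$ or $2$ respectively. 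Third, two distinct centroids $\mathbf{q}, \mathbf{q}' \in Q_{10}$ both have coordinate sign products equal to $+1$, so they agree on either all three coordinates (making them equal) or on exactly one coordinate; in the latter case two coordinates differ by $\frac{2}{3}$, yielding $\left|\left|\mathbf{q} - \mathbf{q}'\right|\right|_{1} = \frac{4}{3}$.

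Combining the three cases, every pairwise distance in $V_{3} \cup Q_{10}$ is at least $\frac{4}{3}$. Since $2r \leq \frac{4}{3}$ for all $r \in \left(0, \frac{2}{3}\right]$, the set $V_{3} \cup Q_{10}$ is a packing set of $r C_{3}^{*}$ in the required range. There is no real obstacle here: once one notices that the four points of $Q_{10}$ are exactly those facet centroids of $C_{3}^{*}$ whose coordinate signs have product $+1$ (equivalently, an even number of minus signs), the pairwise distance $\frac{4}{3}$ between centroids is forced by symmetry, and the vertex-to-centroid bound is a one-line computation. The only thing to check carefully is the indexing of $Q_{10}$ against the symmetry classification, which is visible directly from the explicit list given in the statement.
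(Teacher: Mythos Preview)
Your proof is correct and follows the same approach as the paper: verify that each point has $l_{1}$-norm at most $1$ and that every pairwise distance is at least $\frac{4}{3}$. The paper's proof simply asserts these facts in one line (noting that the verification can be done by hand or computer), whereas you spell out the three symmetry classes of pairs explicitly; your version is a strict expansion of the paper's.
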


\begin{proof}
Any points $\mathbf{x},\mathbf{y}\in V_{3}\cup Q_{10}$, $\mathbf{x}\neq\mathbf{y}$,
have the property that $\left|\left|\mathbf{x}\right|\right|_{1}\leq1$
and $\left|\left|\mathbf{x}-\mathbf{y}\right|\right|_{1}\leq\frac{4}{3}$,
so $V_{3}\cup Q_{10}\subset C_{3}^{*}$ is a packing set of $rC_{n}^{*}$
for $r\leq\frac{2}{3}$.
\end{proof}
\begin{prop}
Let 
\[
Q_{12}^{+}=\frac{1}{5}\left\{ \begin{pmatrix}2\\
2\\
1
\end{pmatrix},\begin{pmatrix}-2\\
1\\
2
\end{pmatrix},\begin{pmatrix}1\\
-2\\
2
\end{pmatrix}\right\} .
\]
 Then $V_{3}\cup Q_{12}^{+}\cup\left(-Q_{12}^{+}\right)\subset C_{3}^{*}$
and for $r\in\left(0,\frac{3}{5}\right]$, 
\begin{eqnarray*}
V_{3}\cup Q_{12}^{+}\cup\left(-Q_{12}^{+}\right) & = & \left\{ \begin{pmatrix}1\\
0\\
0
\end{pmatrix},\begin{pmatrix}-1\\
0\\
0
\end{pmatrix},\begin{pmatrix}0\\
1\\
0
\end{pmatrix},\begin{pmatrix}0\\
-1\\
0
\end{pmatrix},\begin{pmatrix}0\\
0\\
1
\end{pmatrix},\begin{pmatrix}0\\
0\\
-1
\end{pmatrix}\right\} \\
 &  & \,\cup\,\frac{1}{5}\left\{ \begin{pmatrix}2\\
2\\
1
\end{pmatrix},\begin{pmatrix}-2\\
1\\
2
\end{pmatrix},\begin{pmatrix}1\\
-2\\
2
\end{pmatrix}\right\} \cup-\frac{1}{5}\left\{ \begin{pmatrix}2\\
2\\
1
\end{pmatrix},\begin{pmatrix}-2\\
1\\
2
\end{pmatrix},\begin{pmatrix}1\\
-2\\
2
\end{pmatrix}\right\} 
\end{eqnarray*}
 is a packing set of $rC_{3}^{*}$.
\end{prop}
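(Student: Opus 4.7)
The plan is to follow the template of Propositions 5.1 and 5.2: verify containment in $C_3^*$ by computing $l_1$-norms, and then verify that every pair of distinct points is at $l_1$-distance at least $2r=\tfrac{6}{5}$. Containment is immediate since each vertex satisfies $\|\mathbf{v}\|_1=1$ and each of the six points in $Q_{12}^+\cup(-Q_{12}^+)$ has $l_1$-norm $\tfrac{2}{5}+\tfrac{2}{5}+\tfrac{1}{5}=1$; thus all twelve points in fact lie on $\bd C_3^*$. The remaining task is to check the minimum pairwise $l_1$-distance is $\tfrac{6}{5}$, which gives a packing set of $rC_3^*$ for every $r\in(0,\tfrac{3}{5}]$.

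To reduce bookkeeping I would exploit the central symmetry $\mathbf{x}\mapsto-\mathbf{x}$, which preserves $V_3$ and swaps $Q_{12}^+$ with $-Q_{12}^+$, hence preserves the whole twelve-point set and all $l_1$-distances. The $\binom{12}{2}=66$ pairs then split into four types:
\emph{(i)} vertex--vertex pairs, for which the distance is $2>\tfrac{6}{5}$;
\emph{(ii)} vertex--$Q_{12}^+$ pairs ($18$ pairs, with the vertex--$(-Q_{12}^+)$ pairs following from central symmetry by relabelling $\mathbf{e}_i\leftrightarrow-\mathbf{e}_i$);
\emph{(iii)} $Q_{12}^+$--$Q_{12}^+$ pairs ($3$ pairs, and the $-Q_{12}^+$--$(-Q_{12}^+)$ pairs follow by central symmetry);
\emph{(iv)} $Q_{12}^+$--$(-Q_{12}^+)$ pairs ($9$ pairs, not reduced by central symmetry because it merely swaps the two arguments). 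So there are $30$ essentially distinct pairs to verify.

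The verifications themselves are routine $l_1$-distance calculations; representative instances are $\|\mathbf{e}_1-\tfrac{1}{5}(2,2,1)^{\mathsf{T}}\|_1=\tfrac{3}{5}+\tfrac{2}{5}+\tfrac{1}{5}=\tfrac{6}{5}$, $\|\tfrac{1}{5}(2,2,1)^{\mathsf{T}}-\tfrac{1}{5}(1,-2,2)^{\mathsf{T}}\|_1=\tfrac{1}{5}+\tfrac{4}{5}+\tfrac{1}{5}=\tfrac{6}{5}$, and $\|\tfrac{1}{5}(2,2,1)^{\mathsf{T}}-\tfrac{1}{5}(2,-1,-2)^{\mathsf{T}}\|_1=0+\tfrac{3}{5}+\tfrac{3}{5}=\tfrac{6}{5}$. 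Each of the $30$ pairs yields a value $\geq\tfrac{6}{5}$, with equality attained in many of them, which confirms both that the set is a packing set of $rC_3^*$ for $r\in(0,\tfrac{3}{5}]$ and that the threshold $\tfrac{3}{5}$ cannot be improved using this particular configuration.

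The argument is not conceptually hard; the main obstacle is purely organizational, namely carrying out the thirty distance computations cleanly. I do not see a further structural symmetry (such as a $3$-cycle preserving $Q_{12}^+$ alone) that would significantly shorten the enumeration, so I would present the verification either as a compact table in the spirit of Lemma 4.5 or by invoking a short computer-aided check.
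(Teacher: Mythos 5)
Your proposal is correct and takes essentially the same approach as the paper, which simply verifies that all twelve points lie in $C_{3}^{*}$ and that every pair of distinct points is at $l_{1}$-distance at least $\frac{6}{5}$ (the paper leaves the finitely many distance computations to a by-hand or computer check). Your use of central symmetry to organize the pairwise checks is a reasonable bookkeeping refinement but not a different argument.
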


\begin{proof}
Any points $\mathbf{x},\mathbf{y}\in V_{3}\cup Q_{12}^{+}\cup\left(-Q_{12}^{+}\right)$,
$\mathbf{x}\neq\mathbf{y}$, have the property that $\left|\left|\mathbf{x}\right|\right|_{1}\leq1$
and $\left|\left|\mathbf{x}-\mathbf{y}\right|\right|_{1}\leq\frac{6}{5}$,
so $V_{3}\cup Q_{12}^{+}\cup\left(-Q_{12}^{+}\right)\subset C_{3}^{*}$
is a packing set of $rC_{n}^{*}$ for $r\leq\frac{3}{5}$.
\end{proof}
Finally we consider the case $r\in\left(0,\frac{6}{11}\right]$. The
construction below differs from the previous constructions as there
are no obvious large-scale symmetries.
\begin{prop}
Let 
\[
Q_{13}=\frac{1}{11}\left\{ \begin{pmatrix}-1\\
5\\
5
\end{pmatrix},\begin{pmatrix}5\\
-1\\
5
\end{pmatrix},\begin{pmatrix}5\\
5\\
-1
\end{pmatrix},\begin{pmatrix}-5\\
-2\\
4
\end{pmatrix},\begin{pmatrix}-5\\
4\\
-2
\end{pmatrix},\begin{pmatrix}4\\
-2\\
-5
\end{pmatrix},\begin{pmatrix}-3\\
-5\\
-3
\end{pmatrix}\right\} .
\]
 Then $V_{3}\cup Q_{13}\subset C_{3}^{*}$ and for $r\leq\frac{6}{11}$,
\begin{eqnarray*}
V_{3}\cup Q_{13} & = & \left\{ \begin{pmatrix}1\\
0\\
0
\end{pmatrix},\begin{pmatrix}-1\\
0\\
0
\end{pmatrix},\begin{pmatrix}0\\
1\\
0
\end{pmatrix},\begin{pmatrix}0\\
-1\\
0
\end{pmatrix},\begin{pmatrix}0\\
0\\
1
\end{pmatrix},\begin{pmatrix}0\\
0\\
-1
\end{pmatrix}\right\} \\
 &  & \,\cup\,\frac{1}{11}\left\{ \begin{pmatrix}-1\\
5\\
5
\end{pmatrix},\begin{pmatrix}5\\
-1\\
5
\end{pmatrix},\begin{pmatrix}5\\
5\\
-1
\end{pmatrix},\begin{pmatrix}-5\\
-2\\
4
\end{pmatrix},\begin{pmatrix}-5\\
4\\
-2
\end{pmatrix},\begin{pmatrix}4\\
-2\\
-5
\end{pmatrix},\begin{pmatrix}-3\\
-5\\
-3
\end{pmatrix}\right\} 
\end{eqnarray*}
 is a packing set of $rC_{3}^{*}$.
\end{prop}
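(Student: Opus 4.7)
The plan is to verify the two conditions defining a packing set for $rC_3^*$: that every point of $V_3 \cup Q_{13}$ lies in $C_3^*$, and that any two distinct points have $l_1$-distance at least $2r$. Since $r \leq \frac{6}{11}$, it suffices to establish the sharper (and $r$-independent) inequality that every pairwise distance is at least $\frac{12}{11}$, which mirrors the strategy used in the proofs of Propositions 5.1--5.3.

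The containment in $C_3^*$ is immediate: the vertices in $V_3$ have $l_1$-norm $1$, and for each of the seven points $\frac{1}{11}\mathbf{u} \in Q_{13}$, the integer coordinates of $\mathbf{u}$ have absolute values summing to $11$, so each such point lies on the boundary of $C_3^*$. For the distance condition, I would clear denominators by scaling by $11$, so the target inequality becomes $\|11\mathbf{x} - 11\mathbf{y}\|_1 \geq 12$ over an integer-coordinate set. The $\binom{13}{2} = 78$ pairs naturally split into three groups: the $\binom{6}{2} = 15$ pairs inside $V_3$ (all at $l_1$-distance exactly $2$, trivially sufficient); the $6 \times 7 = 42$ mixed pairs in $V_3 \times Q_{13}$; and the $\binom{7}{2} = 21$ pairs inside $Q_{13}$.

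The main obstacle is simply the bulk of the computation in the latter two groups, since $Q_{13}$, unlike the constructions in Propositions 5.1--5.3, exhibits no nontrivial global symmetry to collapse cases. The bound is nevertheless expected to be tight: representative extremal pairs include $\{\tfrac{1}{11}(-1,5,5), \tfrac{1}{11}(5,-1,5)\}$ and $\{\tfrac{1}{11}(-1,5,5), \mathbf{e}_2\}$, both realizing the distance $\frac{12}{11}$ exactly. I would therefore present the proof in the same terse style as the preceding propositions, recording that a direct tabulation (by hand or computer) of the $63$ nontrivial pairs confirms that the minimum pairwise $l_1$-distance on $V_3 \cup Q_{13}$ is $\frac{12}{11} \geq 2r$, which together with containment in $C_3^*$ completes the verification.
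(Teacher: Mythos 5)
Your proposal is correct and matches the paper's own argument, which likewise verifies directly that every point of $V_{3}\cup Q_{13}$ has $l_{1}$-norm at most $1$ and that every pairwise $l_{1}$-distance is at least $\frac{12}{11}=2\cdot\frac{6}{11}$, with the computation left to hand or computer checking. Your added organization (clearing denominators and splitting the $78$ pairs into vertex--vertex, mixed, and $Q_{13}$--$Q_{13}$ groups) is just a more explicit presentation of the same finite verification.
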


\begin{proof}
Any points $\mathbf{x},\mathbf{y}\in V_{3}\cup Q_{13}$, $\mathbf{x}\neq\mathbf{y}$,
have the property that $\left|\left|\mathbf{x}\right|\right|_{1}\leq1$
and $\left|\left|\mathbf{x}-\mathbf{y}\right|\right|_{1}\leq\frac{12}{11}$,
so $V_{3}\cup Q_{13}\subset C_{3}^{*}$ is a packing set of $rC_{n}^{*}$
for $r\leq\frac{6}{11}$.
\end{proof}
We do not know if this result can be improved, either in the sense
of a $13$-point configuration for some $r>\frac{6}{11}$ or a $14$-point
configuration for $r=\frac{6}{11}$. Regarding the first avenue for
improvement, the upper end of the range $r\in\left(0,\frac{6}{11}\right]$
cannot be raised without moving the points of $V_{3}\cup Q_{13}$.
As for the second, according to the proof of Theorem $1.2$ (c), at
most eight points in any $P_{3}\left(\frac{6}{11}\right)$ can be
in the sets $\conv\left(V\left(\frac{6}{11},\left(\sigma_{1},\sigma_{2},\sigma_{3}\right)\right)\right)$
for all $\sigma_{1},\sigma_{2},\sigma_{3}\in\left\{ -1,1\right\} $.
The packing set $V_{3}\cup Q_{13}$ contains points in each set of
the form $\conv\left(V\left(\frac{6}{11},\left(\sigma_{1},\sigma_{2},\sigma_{3}\right)\right)\right)$,
$\sigma_{1},\sigma_{2},\sigma_{3}\in\left\{ -1,1\right\} $, except
for $\conv\left(V\left(\frac{6}{11},\left(1,1,1\right)\right)\right)$,
see Figure 6.5. Since 
\[
V\left(\frac{6}{11},\left(1,1,1\right)\right)=\left\{ \begin{pmatrix}0\\
0\\
0
\end{pmatrix},\frac{1}{11}\begin{pmatrix}5\\
5\\
1
\end{pmatrix},\frac{1}{11}\begin{pmatrix}5\\
1\\
5
\end{pmatrix},\frac{1}{11}\begin{pmatrix}1\\
5\\
5
\end{pmatrix}\right\} 
\]
 and the distances from each point in this set to $\frac{1}{11}\left(-1,5,5\right)^{\mathsf{T}}\in V_{3}\cup Q_{13}$
are 
\begin{eqnarray*}
\left|\left|\begin{pmatrix}0\\
0\\
0
\end{pmatrix}-\frac{1}{11}\begin{pmatrix}-1\\
5\\
5
\end{pmatrix}\right|\right|_{1} & = & 1\\
 & < & \frac{12}{11},
\end{eqnarray*}
 
\begin{eqnarray*}
\left|\left|\frac{1}{11}\begin{pmatrix}5\\
5\\
1
\end{pmatrix}-\frac{1}{11}\begin{pmatrix}-1\\
5\\
5
\end{pmatrix}\right|\right|_{1} & = & \left|\frac{5}{11}+\frac{1}{11}\right|+\left|\frac{1}{11}-\frac{5}{11}\right|+\left|\frac{5}{11}-\frac{5}{11}\right|\\
 & = & \frac{10}{11}\\
 & < & \frac{12}{11},
\end{eqnarray*}
 
\[
\left|\left|\frac{1}{11}\begin{pmatrix}5\\
1\\
5
\end{pmatrix}-\frac{1}{11}\begin{pmatrix}-1\\
5\\
5
\end{pmatrix}\right|\right|_{1}<\frac{12}{11},
\]
 and 
\begin{eqnarray*}
\left|\left|\frac{1}{11}\begin{pmatrix}1\\
5\\
5
\end{pmatrix}-\frac{1}{11}\begin{pmatrix}-1\\
5\\
5
\end{pmatrix}\right|\right|_{1} & = & \left|\frac{1}{11}+\frac{1}{11}\right|+\left|\frac{5}{11}-\frac{5}{11}\right|+\left|\frac{5}{11}-\frac{5}{11}\right|\\
 & = & \frac{2}{11}\\
 & < & \frac{12}{11},
\end{eqnarray*}
it follows that the distance from any point in $\conv\left(V\left(\frac{6}{11},\left(1,1,1\right)\right)\right)$
to $\conv\left(V\left(\frac{6}{11},\left(1,1,1\right)\right)\right)$
is less than $1$. Therefore, a $14$-point packing set of $\frac{6}{11}C_{3}^{*}$
is not possible without moving one or more of the points in the subset
\[
\left\{ \begin{pmatrix}1\\
0\\
0
\end{pmatrix},\begin{pmatrix}0\\
1\\
0
\end{pmatrix},\begin{pmatrix}0\\
0\\
1
\end{pmatrix}\right\} \cup\frac{1}{11}\left\{ \begin{pmatrix}-1\\
5\\
5
\end{pmatrix},\begin{pmatrix}5\\
-1\\
5
\end{pmatrix},\begin{pmatrix}5\\
5\\
-1
\end{pmatrix}\right\} \subset V_{3}\cup Q_{13}.
\]

\begin{proof}[\emph{Proof of Proposition 1.3}]
\emph{} By Proposition 5.4, the set $V_{3}\cup Q_{13}$ is a subset
of $C_{n}^{*}$ with $13$ points and is a packing set for $rC_{3}^{*}$
where $r\in\left(\frac{1}{2},\frac{6}{11}\right]$. Therefore 
\[
\gamma\left(C_{3}^{*},r\right)\geq13\qquad\text{for }r\in\left(\frac{1}{2},\frac{6}{11}\right]\text{.}
\]
\end{proof}

\section{Diagrams of cross-polytope packings}

Below are graphs showing the unit cross-polytope with cross-polytopes
of radius $r$ around each point of $V_{3}$, $V_{3}\cup Q_{10}$,
$V_{3}\cup Q_{12}^{+}\cup\left(-Q_{12}^{+}\right)$, and $V_{3}\cup Q_{13}$.
For each diagram except the first one, the value of $r$ in the diagram
is the largest possible for that configuration of points. In each
diagram the grey cross-polytope in the middle is $C_{3}^{*}$.

\paragraph{$\boldsymbol{V_{3}}$: $\boldsymbol{6}$ points in $\boldsymbol{C_{3}^{*}}$}

$V_{3}$ is a packing set of $rC_{3}^{*}$ for all $0<r\leq1$.
\noindent \begin{center}
\includegraphics[scale=0.25]{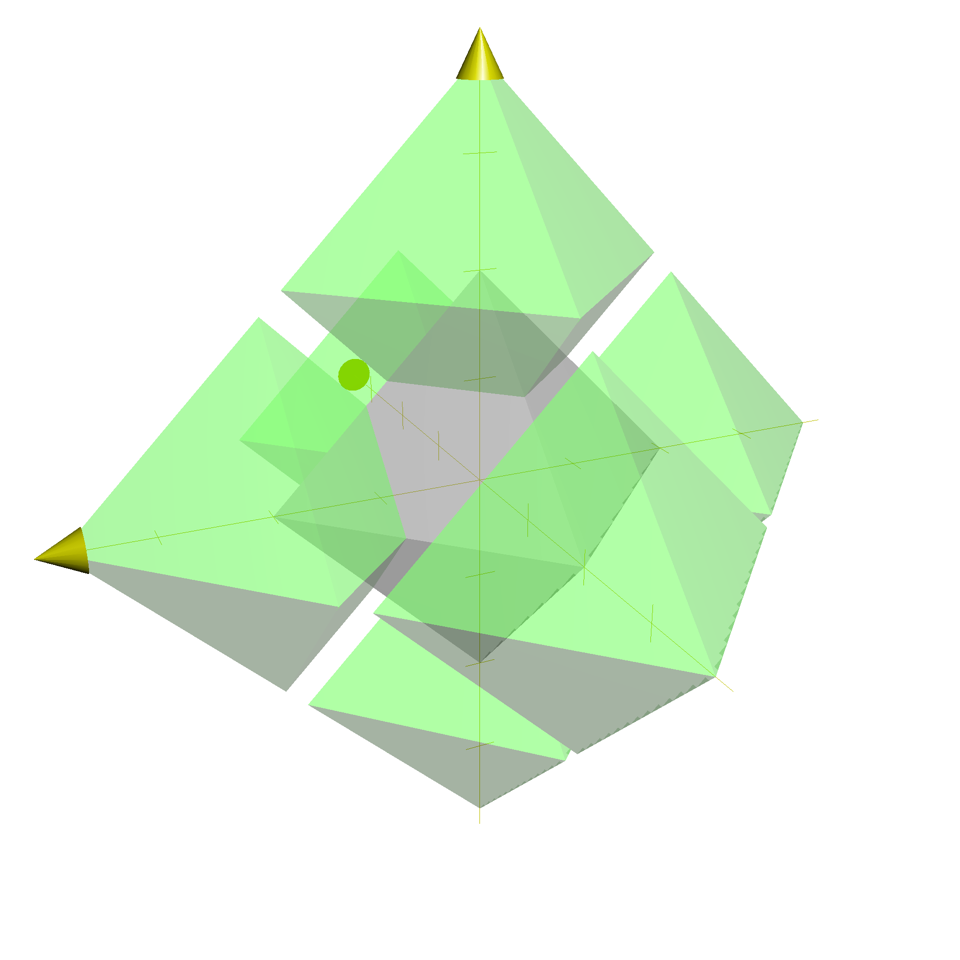}
\par\end{center}

\noindent \begin{center}
\emph{Figure 6.1}. The green cross-polytopes represent the sets $\overline{C\left(\mathbf{x},\frac{9}{10}\right)}$
for $\mathbf{x}\in V_{3}$.
\par\end{center}

\paragraph{$\boldsymbol{V_{3}\cup Q_{10}}$: $\boldsymbol{10}$ points in $\boldsymbol{C_{3}^{*}}$}

$V_{3}\cup Q_{10}$ is a packing set of $rC_{3}^{*}$ for all $0<r\leq\frac{2}{3}$.
\noindent \begin{center}
\includegraphics[scale=0.25]{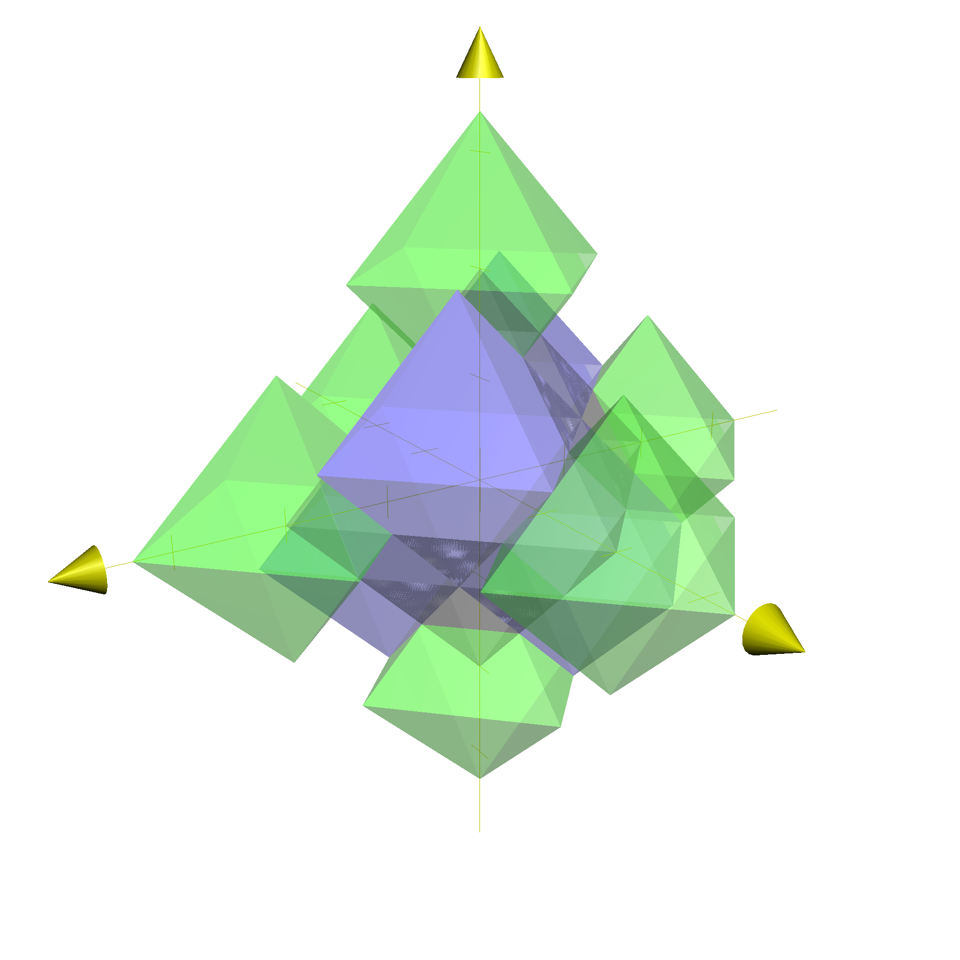}\includegraphics[scale=0.25]{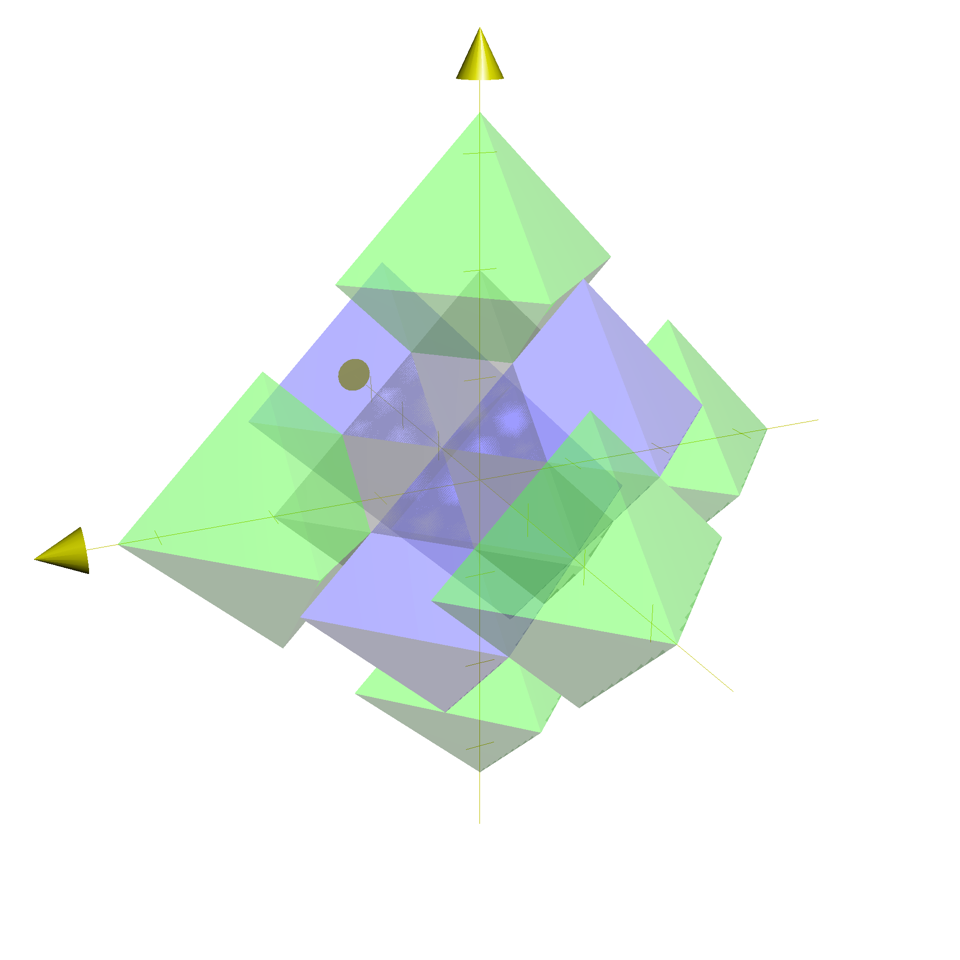}
\par\end{center}

\noindent \begin{center}
\emph{Figures 6.2 (left) and 6.3 (right)}. The green cross-polytopes
represent the sets $\overline{C\left(\mathbf{x},\frac{2}{3}\right)}$
for $\mathbf{x}\in V_{3}$ and the blue cross-polytopes represent
the sets $\overline{C\left(\mathbf{y},\frac{2}{3}\right)}$ for $\mathbf{y}\in Q_{10}$.
\par\end{center}

\paragraph{$\boldsymbol{V_{3}\cup Q_{12}^{+}\cup\left(-Q_{12}^{+}\right)}$:
$\boldsymbol{12}$ points in $\boldsymbol{C_{3}^{*}}$}

$V_{3}\cup Q_{12}^{+}\cup\left(-Q_{12}^{+}\right)$ is a packing set
of $rC_{3}^{*}$ for all $0<r\leq\frac{3}{5}$.
\noindent \begin{center}
\includegraphics[scale=0.25]{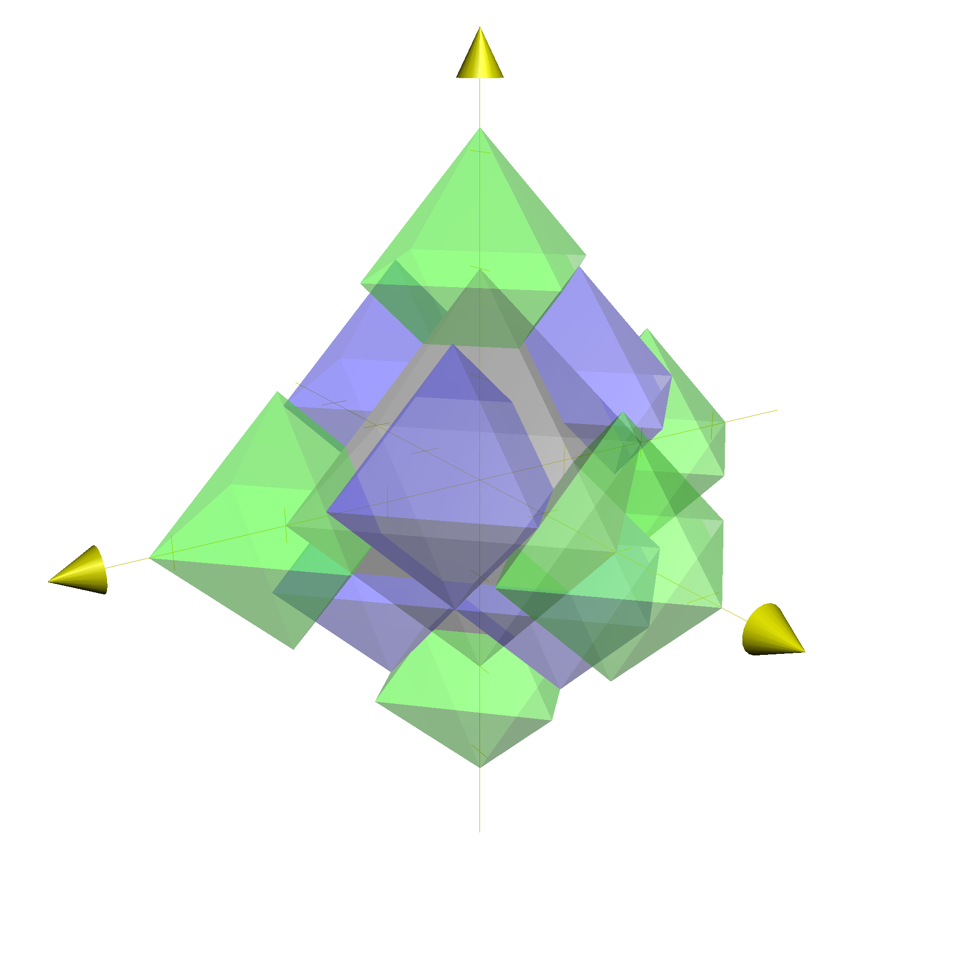}\includegraphics[scale=0.25]{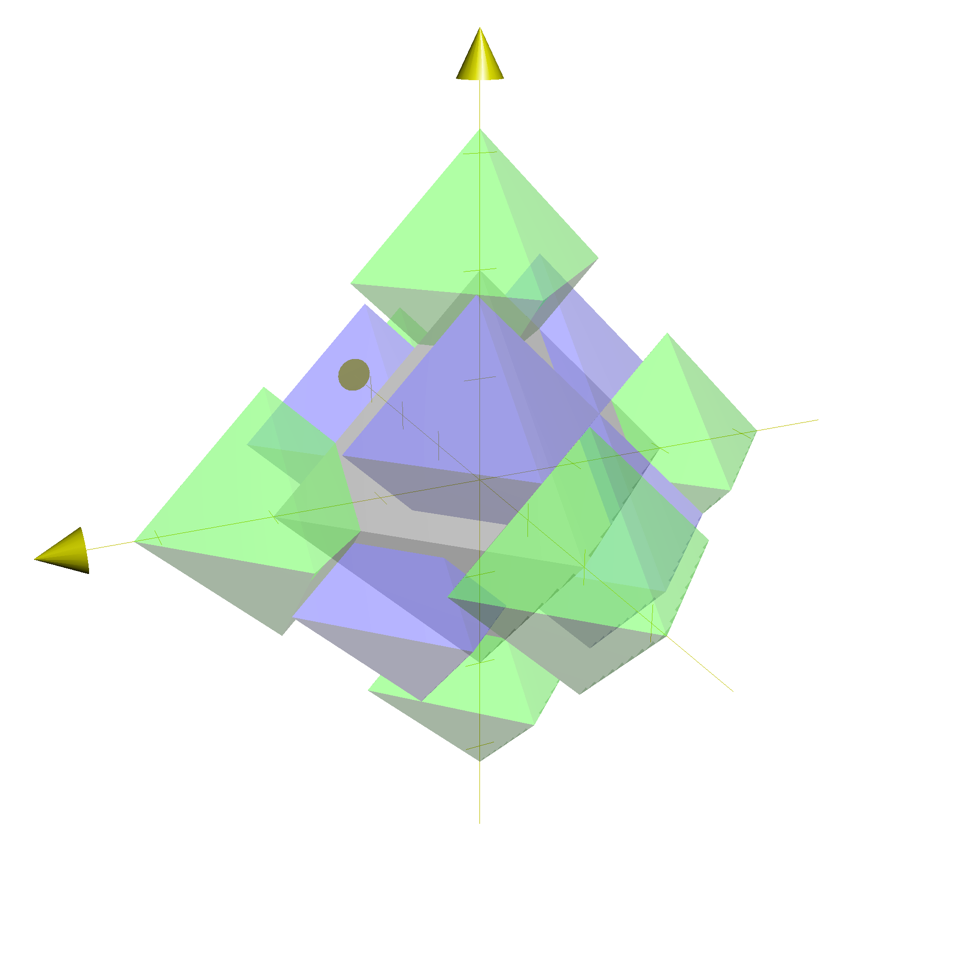}
\par\end{center}

\noindent \begin{center}
\emph{Figures 6.4 (left) and 6.5 (right)}. The green cross-polytopes
represent the sets $\overline{C\left(\mathbf{x},\frac{3}{5}\right)}$
for $\mathbf{x}\in V_{3}$ and the blue cross-polytopes represent
the sets $\overline{C\left(\mathbf{y},\frac{3}{5}\right)}$ for $\mathbf{y}\in V_{3}\cup Q_{12}^{+}\cup\left(-Q_{12}^{+}\right)$.
\par\end{center}

\paragraph{$\boldsymbol{V_{3}\cup Q_{13}}$: $\boldsymbol{13}$ points in $\boldsymbol{C_{3}^{*}}$}

$V_{3}\cup Q_{13}$ is a packing set of $rC_{3}^{*}$ for all $0<r\leq\frac{6}{11}$.
\noindent \begin{center}
\includegraphics[scale=0.25]{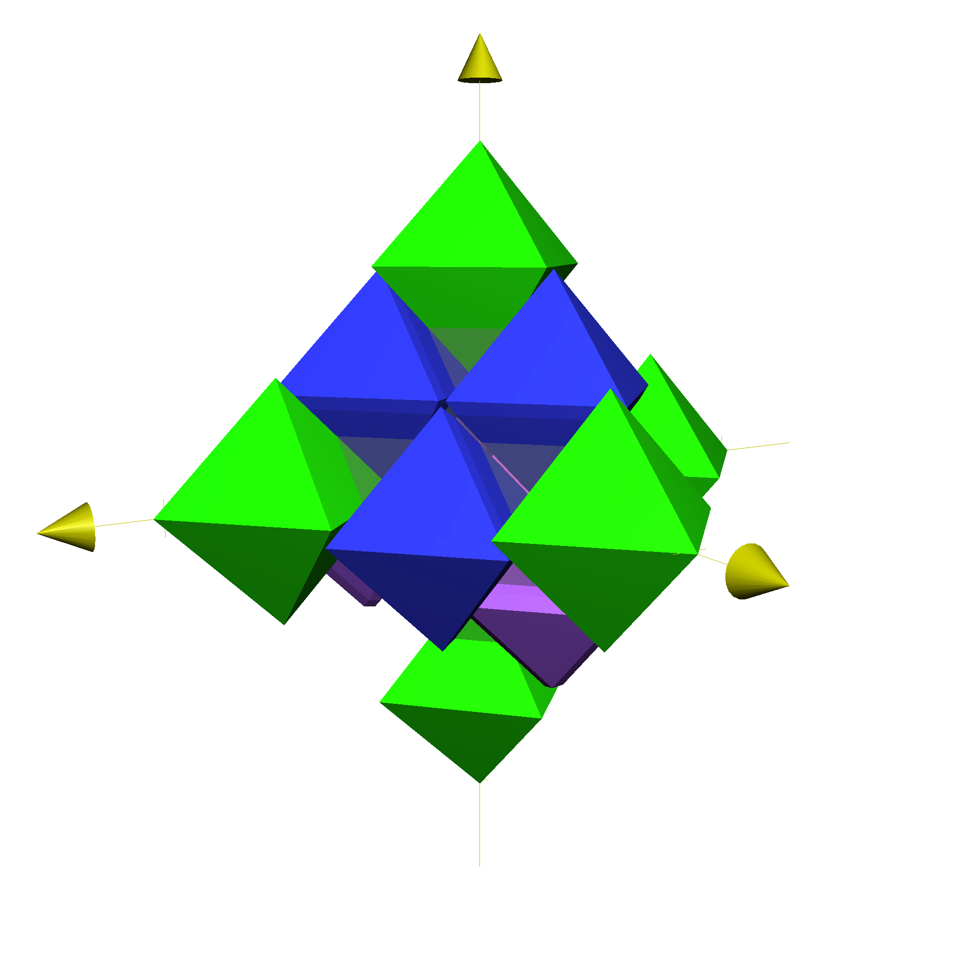}\includegraphics[scale=0.25]{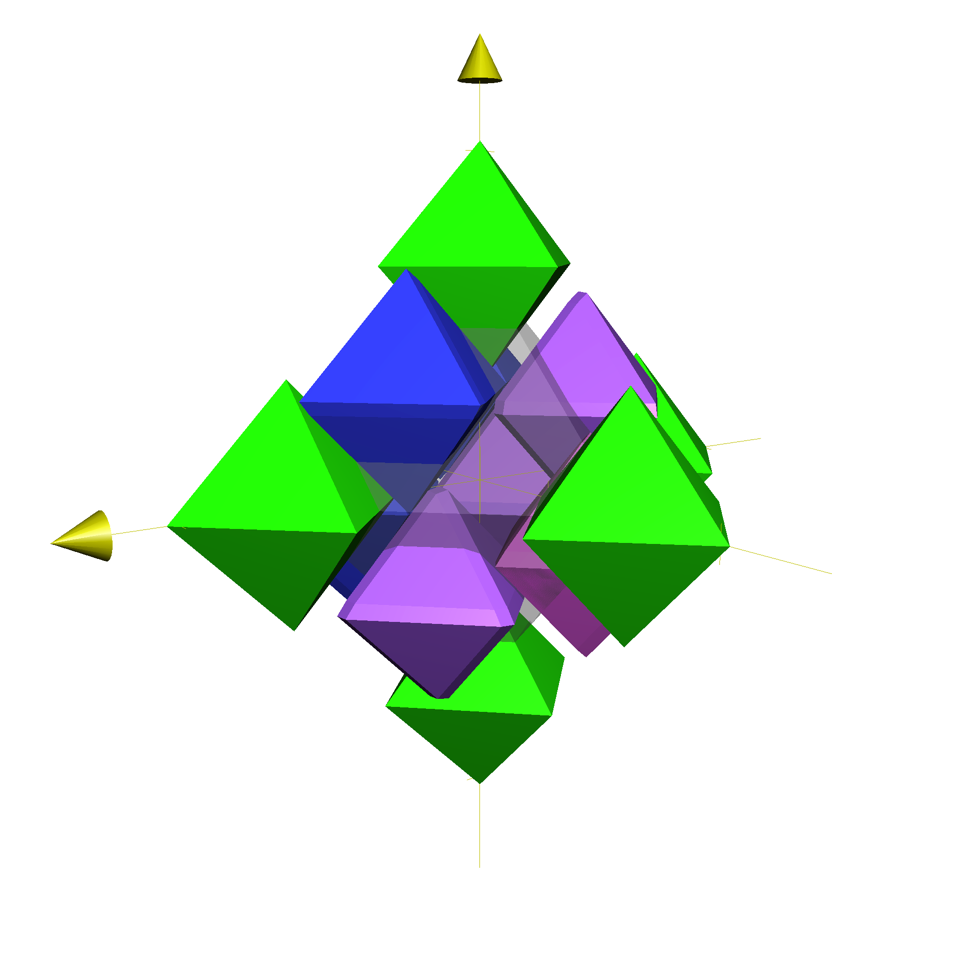}\\
\includegraphics[scale=0.25]{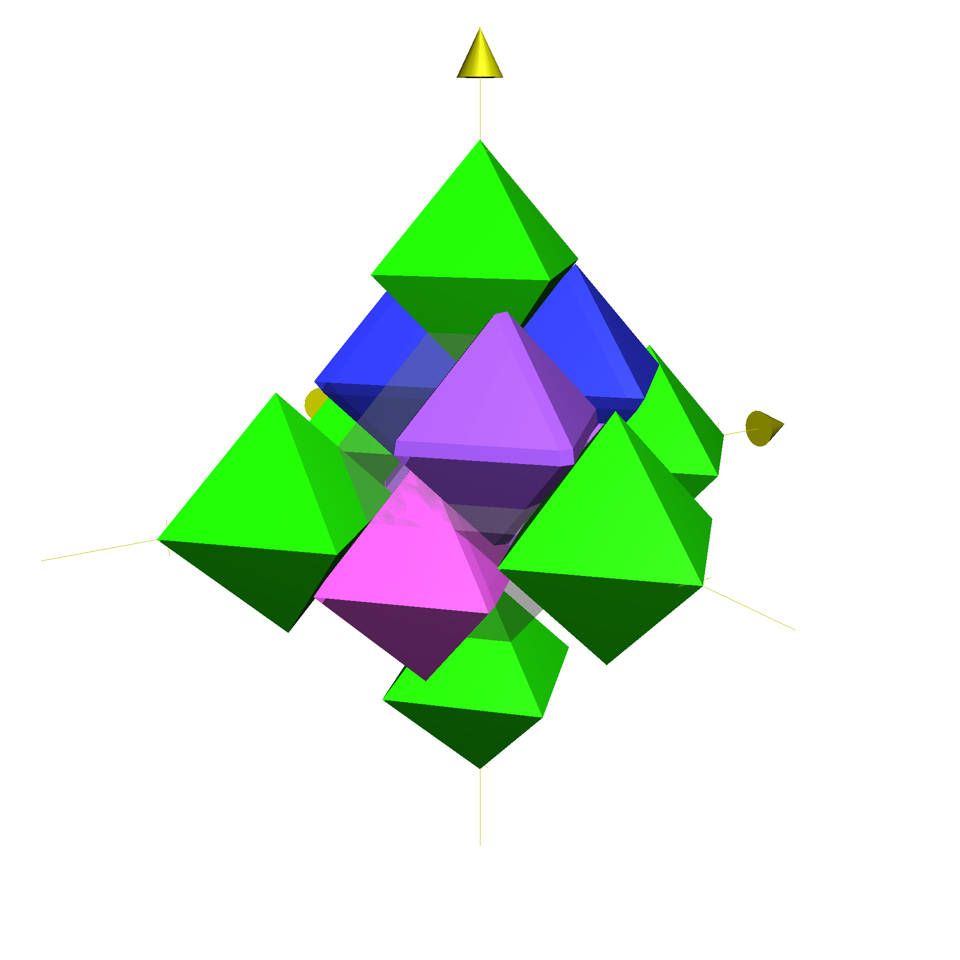}\includegraphics[scale=0.25]{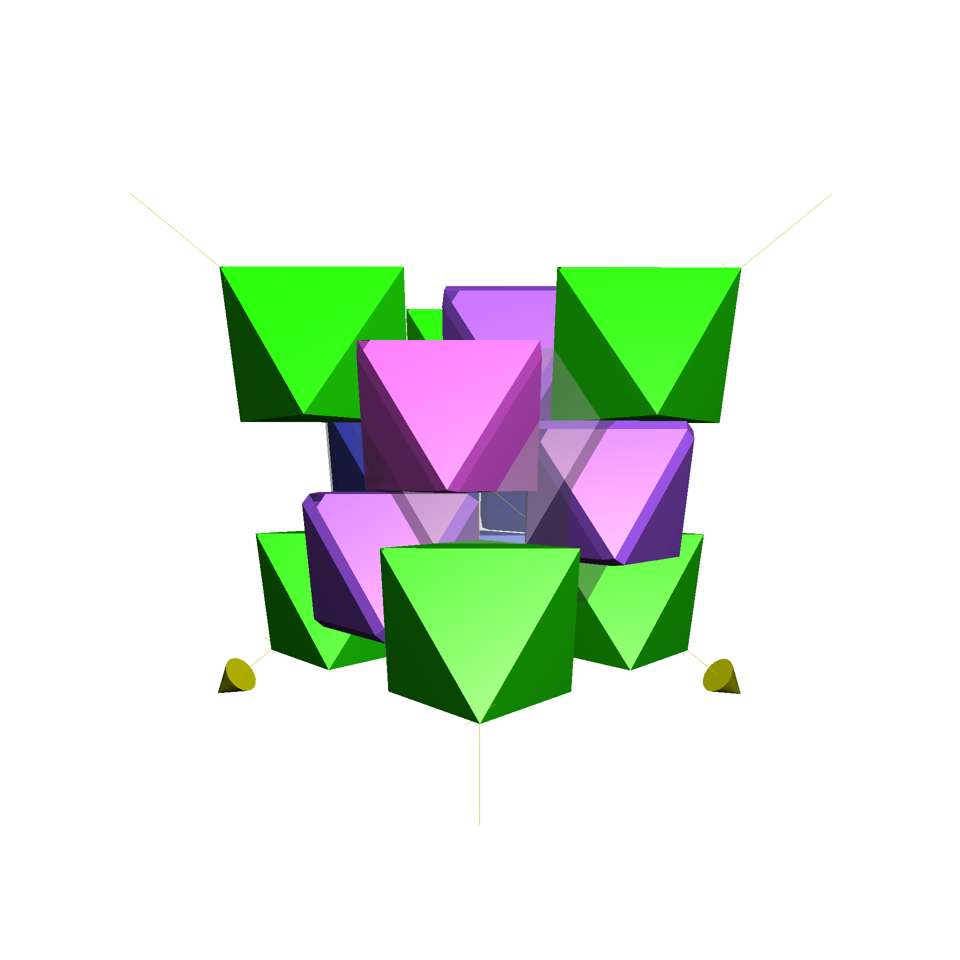}
\par\end{center}

\noindent \begin{center}
\emph{Figures 6.6 (top left), 6.7 (top right), 6.8 (bottom left),
and 6.9 (bottom right)}. The green cross-polytopes represent the sets
$\overline{C\left(\mathbf{x},\frac{6}{11}\right)}$ for $\mathbf{x}\in V_{3}$,
the blue cross-polytopes represent $\overline{C\left(\frac{1}{11}\left(-1,5,5\right),\frac{6}{11}\right)}$,
$\overline{C\left(\frac{1}{11}\left(5,-1,5\right),\frac{6}{11}\right)}$,
and $\overline{C\left(\frac{1}{11}\left(5,5,-1\right),\frac{6}{11}\right)}$,
the purple cross-polytopes represent $\overline{C\left(\frac{1}{11}\left(-5,-2,4\right),\frac{6}{11}\right)}$,
$\overline{C\left(\frac{1}{11}\left(-5,4,-2\right),\frac{6}{11}\right)}$,
and $\overline{C\left(\frac{1}{11}\left(4,-2,-5\right),\frac{6}{11}\right)}$,
and the magenta cross-polytope represents $\overline{C\left(\frac{1}{11}\left(-3,-5,-3\right),\frac{6}{11}\right)}$.
\par\end{center}

\section{Acknowledgements}

Thanks to Martin Henk for his advice, suggestions, and feedback, and
Fei Xue for feedback and discussions, especially with organizing and
simplifying the proof of the $n=3$ and $r\in\left(\frac{3}{5},\frac{2}{3}\right]$
case.

\bibliographystyle{siam}
\bibliography{crosspolytopepackings}

\begin{thebibliography}{10}

\bibitem{BoeroeczkyWintsche2000}
{\sc K.~B{\"o}r{\"o}czky~Jr, I.~F{\'a}bi{\'a}n, and G.~Wintsche}, {\em Covering
  the crosspolytope by equal balls}, Periodica Mathematica Hungarica - PERIOD
  MATH HUNG, 53 (2006), pp.~103--113.

\bibitem{ConwaySloane1998}
{\sc J.~H. Conway and N.~J.~A. Sloane}, {\em Sphere Packings, Lattices and
  Groups}, vol.~290 of Grundlehren der mathematischen Wissenschaften,
  Springer-Verlag New York, Inc., 3rd~ed., 1998.

\bibitem{Coxeter1948}
{\sc H.~S.~M. Coxeter}, {\em Regular Polytopes}, Methuen \& Co. Ltd., London,
  1948.

\bibitem{CroftFalconerGuy1991}
{\sc H.~T. Croft, K.~J. Falconer, and R.~K. Guy}, {\em Unsolved Problems in
  Geometry}, vol.~2 of Problem books in mathematics, Unsolved problems in
  intuitive mathematics, Springer Science+Business Media New York, 1991.

\bibitem{FuSteinhardtZhaoSocolarCharbonneau2015}
{\sc L.~Fu, W.~Steinhardt, H.~Zhao, J.~E.~S. Socolar, and P.~Charbonneau}, {\em
  Hard sphere packings within cylinders}, Soft Matter, 12 (2016),
  pp.~2505--2514.

\bibitem{GoodmanORourkeToth2017}
{\sc J.~E. Goodman, J.~O'Rourke, and C.~T{\'o}th}, {\em Handbook of Discrete
  and Computational Geometry}, Discrete Mathematics and Its Applications,
  Taylor \& Francis Group, 3rd~ed., 2018.

\bibitem{Gruber2007}
{\sc P.~M. Gruber}, {\em Convex and Discrete Geometry}, vol.~336 of Grundlehren
  der mathematischen Wissenschaften, Springer-Verlag Berlin Heidelberg, 2007.

\bibitem{GruberLekkerkerker1987}
{\sc P.~M. Gruber and C.~G. Lekkerkerker}, {\em Geometry of Numbers}, vol.~37
  of North-Holland mathematical library, Elsevier Science Publishers B.V.,
  2nd~ed., 1987.

\bibitem{Hadwiger1957}
{\sc H.~Hadwiger}, {\em {\"U}ber treffanzahlen bei translationsgleichen
  eik{\"o}rpern}, Archiv der Mathematik, 8 (1957), pp.~212--213.

\bibitem{HopkinsStillingerTorquato2010}
{\sc A.~B. Hopkins, F.~H. Stillinger, and S.~Torquato}, {\em Densest local
  sphere-packing diversity: General concepts and application to two
  dimensions}, Physical review. E, Statistical, nonlinear, and soft matter
  physics, 81 (2010), p.~041305.

\bibitem{LarmanZong1999}
{\sc D.~G. Larman and C.~Zong}, {\em On the kissing numbers of some special
  convex bodies}, Discrete \& Computational Geometry, 21 (1999), pp.~233--242.

\bibitem{LubachevskyGraham1997}
{\sc B.~D. Lubachevsky and R.~L. Graham}, {\em Curved hexagonal packings of
  equal disks in a circle}, Discrete \& Computational Geometry, 18 (1997),
  pp.~179--194.

\bibitem{Schuermann2002}
{\sc A.~Sch{\"u}rmann}, {\em On extremal finite packings}, Discrete \&
  Computational Geometry, 28 (2002), pp.~389--403.

\bibitem{Schuermann2005}
\leavevmode\vrule height 2pt depth -1.6pt width 23pt, {\em {On packing spheres
  into containers (about Kepler's finite sphere packing problem)}}, arXiv
  Mathematics e-prints,  (2005), p.~math/0506200.

\bibitem{Swinnerton-Dyer1953}
{\sc H.~P.~F. Swinnerton-Dyer}, {\em Extremal lattices of convex bodies},
  Mathematical Proceedings of the Cambridge Philosophical Society, 49 (1953),
  pp.~161--162.

\bibitem{Talata1999}
{\sc I.~Talata}, {\em On extensive subsets of convex bodies}, Periodica
  Mathematica Hungarica, 38 (1999), pp.~231--246.

\bibitem{Talata2000}
\leavevmode\vrule height 2pt depth -1.6pt width 23pt, {\em A lower bound for
  the translative kissing numbers of simplices}, Combinatorica, 20 (2000),
  pp.~281--293.

\bibitem{Ziegler2007}
{\sc G.~M. Ziegler}, {\em Lectures on Polytopes}, vol.~152 of Graduate Texts in
  Mathematics, Springer Science+Business Media New York, updated seventh
  printing~ed., 2007.

\bibitem{Zong1999}
{\sc C.~Zong}, {\em Sphere Packings}, Springer-Verlag New York, Inc., 1999.

\end{thebibliography}

\lyxaddress{Institut f{\"u}r Mathematik, Technische Universit{\"a}t Berlin, Sekr. MA
4-1, Stra{\ss}e des 17. Juni 136, 10623 Berlin, Germany}

\noindent E-mail: $\mathtt{chun@math.tu\text{-}berlin.de}$
\end{document}